\documentclass[12pt]{article}
\usepackage{graphicx} % Required for inserting images
\usepackage{float}    % option [H] for hard positioning of figures
\usepackage[T1]{fontenc}
\usepackage[utf8]{inputenc}
\usepackage[english]{babel}
\usepackage{tempora}  % Times-like font
\usepackage{amsmath}
\usepackage{amssymb}
\usepackage{amsthm}
\usepackage{amsfonts}
\usepackage{geometry}
\usepackage{thmtools}
\usepackage{dsfont}
\usepackage{setspace}
\usepackage[unicode, pdftex, hidelinks]{hyperref}

\newtheorem{lemma}{Lemma}
\newtheorem*{lemma_nnum}{Lemma}

\newtheorem{theorem}{Theorem}

\newtheorem{corollary}{Corollary}

\newtheorem{statement}{Proposition}

\newtheorem{hypothesis}{Conjecture}

\newtheorem*{note_nnum}{Remark}

\newtheorem{definition}{Definition}

\newcommand{\E}{\mathbb{E}}
\newcommand{\R}{\mathbb{R}}
\newcommand{\N}{\mathbb{N}}
\newcommand{\D}{\mathbb{D}}
\renewcommand{\P}{\mathbb{P}}
\renewcommand{\S}{\mathbb{S}}

\newcommand{\limto}[2][\infty]{\lim\limits_{#2\to #1}}

\newcommand{\distr}[1]{\mathcal{P}_{#1}}

\newcommand{\Ind}{\mathds{1}}

\newcommand{\so}{\Rightarrow}

\newcommand{\propP}{\texorpdfstring{\ensuremath{\mathrm{(P)}}}{(P)}}

\begin{document}
\thispagestyle{empty}
\begin{center}
  \hfill \break

  Saint Petersburg State University\\

  \hfill \break
  \hfill \break
  \hfill \break

  \large\textbf{Alexey Sergeevich Lotnikov}\\
  \large\textbf{Graduation Thesis}\\
  \begin{center}
  \large\bf{Mean distance between points inside and on the boundary of a convex body}
  \end{center}

\begin{center}
\footnotesize{Level of education: Bachelor's programme}\\
\footnotesize{Field of study 02.03.01 ``Mathematics and Computer Science''}\\
\footnotesize{Main educational programme SV.5189.2022 ``Data Science''}
\end{center}
   \hfill \break

\end{center}
\begin{flushright}
  \textbf{Academic advisor:}\\
  Leading Researcher,\\
  Laboratory ``Probabilistic Methods in Analysis'',\\
  Doctor of Physical and Mathematical Sciences,\\
  Corresponding Member of the Russian Academy of Sciences,\\
  Dmitry Nikolaevich Zaporozhets\\
  \hfill \break
  \textbf{Reviewer:}\\
  Senior Researcher,\\
  Laboratory ``Probabilistic Methods in Analysis'',\\
  St.\,Petersburg Department of the Steklov Mathematical Institute\\
  of the Russian Academy of Sciences,\\
  Candidate of Physical and Mathematical Sciences,\\
  Dmitry Vladimirovich Rutsky\\
\end{flushright}
\vfill
\begin{center}
  Saint Petersburg\\
  2026
\end{center}
\clearpage

\thispagestyle{empty}
\tableofcontents \newpage

\section{Introduction}

Consider a convex body $K \subset \mathbb{R}^n$. Throughout, by this we mean a convex compact set with non-empty interior.

\vspace{0.5em}
% \textbf{1. Sylvester's problem.}
\noindent
In 1864 J.~Sylvester posed the following problem of geometric probability: let four points $X_1, X_2, X_3, X_4 \in K$ be chosen at random and independently inside a planar convex body $K$. What is the probability that their convex hull $\mathrm{conv}(X_1, X_2, X_3, X_4)$ turns out to be a triangle? Obviously, this probability depends on the shape of $K$.

\noindent
In 1918 W.~Blaschke showed \cite{Blaschke1918} that for any convex body $K \subset \mathbb{R}^2$ the estimates
\[
\frac{35}{12\pi^2} \leq
P\bigl(\mathrm{conv}(X_1, X_2, X_3, X_4)\ \text{is a triangle}\bigr)
\leq \frac{1}{3},
\]
hold, the left bound being attained on ellipses and the right one on triangles.
This problem is one of the first examples of the application of probabilistic methods in convex geometry and gave rise to the area now known as geometric probability.

This inequality can be restated in terms of the mean area of a random triangle:
\[
P\bigl(\mathrm{conv}(X_1, X_2, X_3, X_4)\ \text{is a triangle}\bigr)
= 4 \cdot \,\frac{\mathbb{E}\,\mathrm{area}\,\mathrm{conv}(X_1,X_2,X_3)}{\mathrm{area}(K)}.
\]
Consequently, Blaschke's results are equivalent to the inequality
\[
\frac{35}{48\pi^2} \leq
\frac{\mathbb{E}\,\mathrm{area}\,\mathrm{conv}(X_1,X_2,X_3)}{\mathrm{area}(K)}
\leq \frac{1}{12},
\]
which gives sharp bounds for the normalised mean area of a random triangle built on points uniformly distributed inside $K$.

\vspace{0.5em}
% \textbf{2. Passing to the case of two points.}
Approaches of this kind naturally lead to the consideration of functionals related to the mean distance between random points.
In particular, the mean distance between two random points inside a convex body has been studied. Let
\[
\Delta(K) = \E|X_1X_2|,
\]
where $X_1$, $X_2$ are two independent random points chosen uniformly inside the body $K$ with respect to the corresponding Lebesgue measure. By $\Delta^p(K)$ we shall mean the expectation of the $p$-th power of the corresponding distance.
In the paper \cite{Bonnet2021} (Bonnet, Gusakova, Thäle, Zaporozhets, 2021) it was proved that
\[
\frac{7}{60} < \frac{\Delta(K)}{|\sigma K|} < \frac{1}{6},
\]

where $|\sigma K|$ is the surface area of our body. Note that both bounds are strict but asymptotically attainable: the lower one for degenerating isosceles triangles, and the upper one for a sequence of rectangles tending to a segment.
It is worth noting that we have stated only the two-dimensional version of the result, although the paper presents a more general fact, valid in an arbitrary dimension.

\vspace{0.5em}
% \textbf{3. The mean distance on the boundary.}
An analogous functional can be defined for random points on the boundary of the body. Let
\[
\theta(K) = \mathbb{E}|Y_1Y_2|,
\]
where $Y_1, Y_2$ are independent points uniformly distributed on $\sigma  K$ according to the corresponding Lebesgue measure. The definition of $\theta^p(K)$ is introduced analogously to $\Delta^p(K)$.
In 2022 A.~S.~Tokmachev \cite{Tokmachev2022} showed that for planar convex bodies the inequality
\[
\frac{\theta(K)}{|\sigma K|} \leq \frac{2}{\pi^2}
\]
holds, with equality if and only if $K$ is a disc. Thus the disc maximises the mean distance between two random boundary points for a fixed perimeter.

\vspace{0.5em}
% \textbf{4. The conjecture of Zaporozhets and Tarasov.}
In 2019 D.\,N.~Zaporozhets and A.\,S.~Tarasov put forward a natural conjecture comparing the interior and the boundary functionals:

\begin{hypothesis}\label{hyp:zt}
For any convex body $K$
\[
\Delta(K) \le \theta(K).
\]
\end{hypothesis}

\noindent
The conjecture can also be stated for all moments:

\begin{hypothesis}\label{hyp:zt-moments}
For any convex body $K$
\[
\Delta^{p}(K) \le \theta^{p}(K), \qquad p \geq 1.
\]
\end{hypothesis}

\noindent
\vspace{0.5em}
One may observe that in order to verify Conjecture~\ref{hyp:zt} it suffices to establish the following fact.

\begin{hypothesis}\label{hyp:theta-bound}
For any convex body $K$
\[
\cfrac{1}{6} \leq \cfrac{\theta(K)}{|\sigma K|}.
\]
\end{hypothesis}

\noindent
However, this statement also remains a conjecture.

\vspace{0.5em}
% \textbf{5. The current state of the question.}
Despite the simplicity of its statement, Conjecture~\ref{hyp:zt} has remained open for several years and has attracted considerable interest.
Its study was the main subject of the present work. We shall consider in detail the case of centrally symmetric bodies, and also point out some explicit relations that can be obtained for particular classes of bodies.
Among the results of this work, the centrally symmetric case is published in~\cite{lotnikov2025}, and the counterexample in higher dimensions is being prepared for publication as a preprint.

\section{Basic definitions}

Let us introduce several general notations that we shall use throughout the work. By the symbols $X$, $Y$, $Z, \ldots$ we shall denote random points. A prime will denote an independent copy of a point already introduced: thus, $X'$ is an independent copy of the point $X$. Whenever we speak of random points chosen inside or on the boundary of a body, $X$ will denote a random interior point and $Y$ a random point on the boundary (in the sense of the definitions given in the introduction). Of course, for the sake of rigour such notations should be indexed by the body inside which we choose the points. However, in most cases the body $K$ is clear from the context; where this is not so, we introduce additional indices.

It will also be convenient to use the term \emph{mean internal distance of a distribution}. By this we shall mean the mean distance between independent random variables drawn from the given distribution. Moreover, in this work we shall have to project points onto various directions a great deal, so we introduce the notation in advance: by $X_u$ we shall mean the projection of the point $X$ onto the direction $u$.

\section{Structure of the work}

The present work can be divided into several parts. After stating the results we shall pass to the consideration of centrally symmetric bodies and discuss in detail the differences between the one-dimensional and the multi-dimensional results. Next we shall pass to a discussion of the asymptotic behaviour of the quantities under study in the case of planar bodies, where we shall discuss the proof of a certain weakened version of Conjecture~\ref{hyp:zt}. The third part will be devoted to deriving exact relations between the functionals under study for some classes of bodies. In the last part we shall try to share various unexpected views on the problem and the results to which they lead, and we shall also show several counterexamples to extremely natural approaches to solving the presented problem.

\section{Results}
In this section we present the main results obtained in this work. We begin with the centrally symmetric case, where the question can be answered completely.

\subsection{The centrally symmetric case}

The Zaporozhets--Tarasov conjecture turns out to be true only in dimensions not exceeding two. This statement can be split into two theorems.

\begin{theorem}
    For any centrally symmetric body $K \subset \R^2$ with centre of symmetry $O$ and for any direction $u$ one has
    $$|P_u(OY)| \succ |P_u(OX)|.$$
\end{theorem}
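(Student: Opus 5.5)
We read $\succ$ as first-order stochastic dominance, so the claim is $\P(|Y_u|\le t)\le \P(|X_u|\le t)$ for all $t\ge 0$. Fix $u$ and put $h=h_K(u)$; by central symmetry the projection of $K$ onto $u$ is $[-h,h]$. For $0\le t\le h$ let $K_t=K\cap\{|x_u|\le t\}$, which is again a centrally symmetric convex body. Let $A(t)=\mathrm{area}(K_t)$, and let $L(t)$ be the length of $\sigma K\cap\{|x_u|\le t\}$, i.e. of the two boundary arcs inside the strip. Then
$$\P(|X_u|\le t)=\frac{A(t)}{A(h)},\qquad \P(|Y_u|\le t)=\frac{L(t)}{L(h)},$$
and the theorem becomes $D(t):=A(t)/A(h)-L(t)/L(h)\ge 0$ on $[0,h]$, with $D(0)=D(h)=0$.

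\emph{Interior side.} We have $A'(t)=2w(t)$, where $w(t)$ is the length of the chord $K\cap\{x_u=t\}$. By central symmetry $w(t)=w(-t)$. The chord length of a convex body is concave in $t$ (Brunn's principle in dimension one), so $w$ is nonincreasing on $[0,h]$. Hence $A$ is concave and $A(t)/A(h)\ge t/h$. It therefore suffices to prove $L(t)/L(h)\le t/h$, that is, that $L(t)/t$ is nondecreasing. Equivalently, the average of the boundary density $\ell=L'$ over $[0,t]$ never exceeds its average over $[0,h]$.

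\emph{Boundary side.} Write the right part of $\sigma K$ as $x=f(s)$ for $s\in[-h,h]$, with $f$ concave. Central symmetry makes the left part $x=-f(-s)$, so
$$\ell(t)=2\Bigl(\sqrt{1+f'(t)^2}+\sqrt{1+f'(-t)^2}\Bigr).$$
As $t$ increases, $f'(t)$ decreases and $f'(-t)$ increases, so the pair $(f'(t),f'(-t))$ spreads apart. Since $\varphi(s)=\sqrt{1+s^2}$ is convex, one expects $\ell$ to grow with $t$, and if $\ell$ is nondecreasing then $L$ is convex and we are done. This is the step I expect to be the main obstacle. The pair does not spread symmetrically about a fixed midpoint, so convexity of $\varphi$ alone does not give monotonicity of $\ell$ pointwise.

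I would first try to avoid pointwise monotonicity and prove the integral inequality $L(t)/t\le L(h)/h$ directly and geometrically. Under the shear $x\mapsto x-cs$, which changes neither the $u$-projections nor the areas, one may assume $f'(0)=0$, i.e. the chord through $O$ is horizontal. Then $f'(t)\le 0\le f'(-t)$, and both $|f'(t)|$ and $|f'(-t)|$ are nondecreasing in $t$. Each term of $\ell$ is then separately nondecreasing, so $L$ is convex on $[0,h]$.

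Arc length, however, is not shear-invariant, so the real obstacle is that this normalization changes $L$. If the shear cannot be removed, the fallback is to compare $D$ via a single-crossing argument. One would show that the densities satisfy $\ell(t)/L(h)-2w(t)/A(h)$ changes sign at most once, from $-$ to $+$, on $[0,h]$. Together with $D(0)=D(h)=0$, this forces $D\ge 0$.
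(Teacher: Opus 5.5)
\textbf{There is a genuine gap.} Your reading of $\succ$ and the interior half are fine: $w$ is even and concave, so $A(t)/A(h)\ge t/h$. The boundary half, however, reduces the theorem to $L(t)/L(h)\le t/h$, and that inequality is false for centrally symmetric bodies.

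Take the rectangle $[-3,3]\times[-1,1]$ and let $u$ make an angle of $60^\circ$ with the long sides. Then $h=\frac{3+\sqrt{3}}{2}>2$. For $t\le\frac{3-\sqrt{3}}{2}$ only the two long sides meet the strip $\{|x_u|\le t\}$, each carrying length $2$ per unit of $u$-length. So $L(t)=8t$, while $L(h)=16$, hence $L(t)/L(h)=t/2>t/h$. (The theorem still holds there, because $A(t)/A(h)$ is well above $t/h$.) Comparing each law separately with the uniform law on $[0,h]$ is therefore too lossy. No proof that $\ell$, or $L(t)/t$, is monotone can exist.

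The shear, as you note, changes the boundary measure and so says nothing about $K$. Your fallback is not just unproven but false. For the centrally symmetric hexagon with vertices $\pm(1,10)$, $\pm(0,5)$, $\pm(\frac{1}{2},10)$ and $u=e_1$, one finds $A(h)=12.5$ and $L(h)\approx 41.12$. The density difference $2w(t)/A(h)-\ell(t)/L(h)$ is $\approx 0.380-0.8t$ on $(0,\frac{1}{2})$ and $\approx 1.620-2.4t$ on $(\frac{1}{2},1)$. It changes sign three times: at $t\approx 0.475$, at $t=\frac{1}{2}$, and at $t\approx 0.675$. So no single-crossing argument on densities can work.

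\textbf{The missing idea} is to compare a cumulative quantity with a density, which is what the paper does. Sweep a line perpendicular to $u$ inward from the supporting line. The paper shows that the cap ratio $\frac{A(h)-A(t)}{L(h)-L(t)}$ is nonincreasing in $t$. Since this ratio equals $A(h)/L(h)$ at $t=0$, that is exactly $D\ge 0$. Differentiating, it suffices to prove $\frac{A(h)-A(t)}{L(h)-L(t)}\le \frac{2w(t)}{\ell(t)}$. For a single cap this reads: cap area over cap boundary length is at most $a/(1/\sin\alpha_1+1/\sin\alpha_2)$, where $a$ is the cutting chord and $\alpha_1,\alpha_2$ are the angles at its ends.

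Your observation $f'(-t)\ge f'(t)$ is precisely the geometric input needed here. It says the tangent lines at the two ends of the chord converge on the cap side. Hence the cap lies in a half-strip of width $a\min(\sin\alpha_1,\sin\alpha_2)$ over the chord. Doubling the cap by the half-turn about the chord's midpoint and bounding each section by half the perimeter gives cap area $\le\frac{1}{2}a\min(\sin\alpha_1,\sin\alpha_2)$ times cap length. This is stronger than required. The tail-versus-density criterion tolerates multiple density crossings like those in the hexagon, which is why it succeeds where your routes cannot.
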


\begin{corollary}
    $$\forall \, n > 0 \quad \E|OY|^n > \E|OX|^n.$$
\end{corollary}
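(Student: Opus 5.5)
The plan is to reduce the moments of the length $|OY|$ to moments of one-dimensional projections, and then integrate the theorem over all directions. Throughout I read $\succ$ in the theorem as first-order stochastic dominance, i.e. $\P(|P_u(OY)|>t)\ge \P(|P_u(OX)|>t)$ for all $t\ge 0$. Then $\E f(|P_u(OY)|)\ge \E f(|P_u(OX)|)$ for every nondecreasing $f$ on $[0,\infty)$. If $\succ$ meant a weaker order, such as the increasing convex order, the argument below would still go through for $n\ge 1$, where $t\mapsto t^n$ is convex.

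\textbf{Step 1 (representation of $|v|^n$).} For a vector $v\in\R^2$ and $n>0$, rotation invariance gives
\[
|v|^n = c_n \int_{\S^1} |\langle v,u\rangle|^n \, du, \qquad c_n^{-1}=\int_0^{2\pi}|\cos\varphi|^n\,d\varphi\in(0,\infty).
\]
Apply this with $v=OY$ and $v=OX$, take expectations, and use Fubini (everything is nonnegative and bounded, since $K$ is compact). This yields
\[
\E|OY|^n=c_n\int_{\S^1}\E|P_u(OY)|^n\,du,\qquad \E|OX|^n=c_n\int_{\S^1}\E|P_u(OX)|^n\,du .
\]

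\textbf{Step 2 (weak inequality).} The function $t\mapsto t^n$ is increasing on $[0,\infty)$. By the theorem, for every $u$ we have $\E|P_u(OY)|^n\ge \E|P_u(OX)|^n$. Integrating over $u\in\S^1$ gives $\E|OY|^n\ge\E|OX|^n$.

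\textbf{Step 3 (strictness — the main point needing care).} Suppose equality held. Then for almost every $u$ we would have $\E|P_u(OY)|^n=\E|P_u(OX)|^n$. For random variables $A\succ B$ and a strictly increasing $f$, the identity
\[
\E f(A)-\E f(B)=\int_0^\infty \bigl(\P(A>t)-\P(B>t)\bigr)\,df(t)
\]
has a nonnegative integrand, and the measure $df$ charges every interval. So equality forces the distribution functions of $A$ and $B$ to agree at every point of a dense set, hence everywhere by right-continuity, i.e. $A\overset{d}{=}B$.

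Hence $|P_u(OY)|\overset{d}{=}|P_u(OX)|$ for a.e. $u$. By central symmetry of $K$, both $P_u(OY)$ and $P_u(OX)$ are symmetric random variables, so in fact $P_u(OY)\overset{d}{=}P_u(OX)$ for a.e. $u$. Characteristic functions are continuous in the frequency vector, so these equalities extend to all $u$. The Cramér--Wold theorem then gives $Y\overset{d}{=}X$. This is impossible: the law of $X$ is absolutely continuous with respect to Lebesgue measure, while $Y$ lives on $\partial K$, which has zero area. Therefore the inequality is strict for every $n>0$.
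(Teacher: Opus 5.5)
Your proof is correct and follows the paper's route: the per-direction first-order dominance $|P_u(OY)|\succ|P_u(OX)|$ (which is what the paper's $\succ$ means, so your aside about weaker orders is unnecessary) gives the projected moment inequality through the ``wider'' property, and the Cauchy--Crofton formula of your Step~1 transfers it to $|OY|$ and $|OX|$. The property the paper cites only yields $\geq$, and the paper never justifies the strict inequality it states, so your Step~3 is a genuine and valid addition: equality would force $P_u(OY)\overset{d}{=}P_u(OX)$ for a.e.\ $u$, hence $OY\overset{d}{=}OX$ by the Cram\'er--Wold argument, which is impossible because $X$ is absolutely continuous while $Y$ lives on the null set $\partial K$.
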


\begin{corollary}
    $$\forall \, n \geq 1 \quad \E|YY'|^n \geq \E|XX'|^n.$$
\end{corollary}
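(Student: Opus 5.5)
The plan is to reduce the two-dimensional statement to one-dimensional projections, apply the Theorem direction by direction, and then integrate back over directions. First, for every $v\in\R^2$ and every $n>0$,
\[
|v|^n = c_n \int_{\S^1} |\langle v,u\rangle|^n \, du, \qquad c_n^{-1}=\int_{\S^1}|\langle e,u\rangle|^n\,du,
\]
with $e$ any fixed unit vector; this holds by rotation invariance. Applying it to $v=Y-Y'$ and $v=X-X'$ and using Fubini, it suffices to prove, for every fixed direction $u$,
\[
\E|Y_u-Y'_u|^n \ \ge\ \E|X_u-X'_u|^n .
\]
Here $Y_u,Y'_u$ (resp.\ $X_u,X'_u$) are projections onto $u$ measured from the centre $O$, and they are i.i.d.

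Second, I use central symmetry. Since $K$ is symmetric about $O$, both the uniform measure on $K$ and the boundary measure on $\sigma K$ are invariant under $x\mapsto 2O-x$. Hence $Y_u$ and $X_u$ are symmetric real random variables. A symmetric variable $a$ has the same law as $\varepsilon|a|$, where $\varepsilon$ is an independent Rademacher sign. Taking independent signs $\varepsilon,\varepsilon'$ and averaging over them gives
\[
\E|Y_u-Y'_u|^n=\E f\bigl(|Y_u|,|Y'_u|\bigr), \qquad f(s,t)=\tfrac12\bigl(|s-t|^n+(s+t)^n\bigr),\quad s,t\ge 0,
\]
and the same identity holds for $X$. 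For $n\ge1$ the function $f$ has two properties.
\begin{itemize}
\item It is convex, being a sum of convex functions.
\item It is nondecreasing in each argument. Indeed, $\partial_s f=\tfrac n2\bigl((s+t)^{n-1}+\operatorname{sign}(s-t)\,|s-t|^{n-1}\bigr)\ge0$, because $s+t\ge|s-t|$ and $n-1\ge0$.
\end{itemize}
This is exactly where the restriction $n\ge1$ enters.

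Third, I apply the Theorem, $|Y_u|\succ|X_u|$, in each coordinate separately. If $\succ$ is first-order stochastic dominance, I couple $|Y_u|\ge|X_u|$ and $|Y'_u|\ge|X'_u|$ independently, and monotonicity of $f$ gives the inequality pointwise. If $\succ$ is the increasing convex order, I use the fact that this order is preserved under independent products for functions that are increasing and convex in each variable. Concretely, I condition on the second variable, swap the first, and then swap the second:
\[
\E f(|Y_u|,|Y'_u|)\ge \E f(|X_u|,|Y'_u|)\ge \E f(|X_u|,|X'_u|).
\]
Integrating over $u$ yields $\E|YY'|^n\ge\E|XX'|^n$.

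The main obstacle is pinning down which order $\succ$ denotes in the Theorem, and checking that $f$ has exactly the monotonicity (and, if needed, convexity) that this order requires. The symmetrization step is what makes this work. Without central symmetry one would face $|a-b|^n$ directly, which is not monotone in $|a|$ and $|b|$, and the Theorem could not be applied.
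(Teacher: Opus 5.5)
Your proposal is correct and follows essentially the paper's route: Cauchy--Crofton reduction to projections, symmetrization into halves with the kernel $\tfrac12\bigl(|s-t|^n+(s+t)^n\bigr)$ nondecreasing in each argument, and a monotone coupling (the paper uses Smirnov's transform $F$ with $F(y)\ge y$). In the paper $\succ$ is first-order stochastic dominance, since the geometric lemma gives $\P(|P_u(OY)|\le t)\le \P(|P_u(OX)|\le t)$, so your first case is the one that applies and the convexity discussion is not needed.
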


The second statement shows that the conjecture fails in higher dimensions.
\begin{theorem}\label{thm:existence-d3}
    For any dimension $d \geq 3$ and any $p \geq 1$ there exists a centrally symmetric convex body $K \subset \R^d$ such that
    $$\Delta^p(K) > \theta^p(K).$$
\end{theorem}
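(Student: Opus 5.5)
The approach is to construct an explicit centrally symmetric family $K_\varepsilon\subset\R^d$ and compare $\Delta^p(K_\varepsilon)$ and $\theta^p(K_\varepsilon)$ asymptotically as $\varepsilon\to0$. The statement only asks for existence, so one degenerating family on which the two functionals can be expanded in powers of $\varepsilon$ suffices, provided the leading nonvanishing term of $\Delta^p-\theta^p$ is positive. I would first settle the case $p=2$ and then extend to all $p\ge1$ by the same expansion.

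The case $p=2$ reduces to a one-point comparison. For a centrally symmetric $K$ with centre $O$ one has $\Delta^2(K)=2\,\E|OX|^2$ and $\theta^2(K)=2\,\E|OY|^2$, so we need $\E|OY|^2<\E|OX|^2$. The divergence theorem applied to $x\mapsto |x|^2x$ gives
\[
\int_{\sigma K}\langle y,n(y)\rangle\,|y|^2\,dy=(d+2)\int_K|x|^2\,dx ,
\qquad
\int_{\sigma K}\langle y,n(y)\rangle\,dy=d\,|K| ,
\]
where $n(y)$ is the outer unit normal. Hence $\E|OX|^2=\frac{d}{d+2}\,\widetilde{\E}|OY|^2$, where $\widetilde{\E}$ is the boundary average weighted by the support value $h(y)=\langle y,n(y)\rangle$. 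The required inequality therefore says that $h$ and $|y|^2$ must be strongly positively correlated on $\sigma K$: boundary points far from $O$ must carry large support value. The correlation must be strong enough to beat the factor $\frac{d}{d+2}<1$. In the plane this is impossible by the first Theorem of this section. In dimension $d\ge3$, however, the boundary has room to carry a large amount of surface area with small $h$ close to $O$.

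Guided by this, I would look among bodies of revolution and products with two different scales. I would take $K_\varepsilon$ to be close to a lower-dimensional convex set $L$ of dimension $k$ with $2\le k\le d-1$, thickened by a body of size $\varepsilon$ in the complementary directions. The thickness profile would be chosen so that, after projecting onto $L$, the surface measure (density roughly $t(x)^{d-k-1}$, where $t(x)$ is the thickness over $x\in L$) is more concentrated near $O$ than the volume measure (density roughly $t(x)^{d-k}$).

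To do this I would write both functionals through the projections onto $L$. Then $\Delta^p(K_\varepsilon)$ and $\theta^p(K_\varepsilon)$ equal mean internal distances of the two projected densities on $L$, up to $O(\varepsilon^2)$. I would compare these two densities, using the convex-order tools that underlie the first Theorem and its Corollaries. The key point is that the exponents $d-k-1$ and $d-k$ differ, and the surface exponent is positive exactly because $d\ge3$. This should let one tune $t(\cdot)$ so that the volume density on $L$ dominates the surface density in convex order. Then $\E|XX'|^p>\E|YY'|^p$ for every convex $p\ge1$ at leading order, and small $\varepsilon$ finishes the argument.

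The main obstacle is the choice of $t(\cdot)$. Concavity of $t$ tends to push the factor $t^{d-k}=t\cdot t^{d-k-1}$ towards the centre, which is the wrong direction. The contributions of the rim and of the transverse directions also tend to increase $\theta^p$. So I expect the real work to be in finding a profile, probably piecewise linear with a sharp change of slope, for which the leading term has the correct sign. I also need the error terms, namely the rim area and the transverse $O(\varepsilon)$ displacements, to be of strictly lower order. I would first test candidate profiles numerically in the case $p=2$ via the identity above, and only then carry out the expansion for general $p$.
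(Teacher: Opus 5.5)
There is a genuine gap. The body is never actually produced: choosing $t(\cdot)$ and checking the sign of the leading term is deferred as ``the real work''. Worse, the mechanism you plan to use for that step cannot succeed.

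With a fixed transverse shape $B$ scaled by $\varepsilon t(x)$, convexity of $K_\varepsilon$ forces $t$ to be concave on $L$. At leading order, the projected volume measure $\mu$ (density $\propto t^{d-k}$) and the projected surface measure $\nu$ (density $\propto t^{d-k-1}$) are then related by $d\mu/d\nu=t/\int t\,d\nu$. Testing against the convex function $-t$ rules out convex order:
\[
\E_\mu t=\frac{\int t^2\,d\nu}{\int t\,d\nu}\;\ge\;\int t\,d\nu=\E_\nu t ,
\]
with equality only if $t$ is $\nu$-a.e.\ constant. So $\mu\succeq_{\mathrm{cx}}\nu$ forces $\mu=\nu$.

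The per-direction version used in the planar theorem fails as well. If every projection of $\mu$ is wider than the corresponding projection of $\nu$, then the $\nu$-average of $t$ over every thin cap of the support is at least $\E_\nu t$. Hence $t\ge\E_\nu t$ at exposed points, and by concavity everywhere, so $t$ is constant. No profile, piecewise linear or otherwise, yields the domination you are aiming for. Your own remark that concavity pushes the weight towards the centre is exactly this obstruction; it is the analogue of the planar remark in the paper. (Also, for $d=3$ the only admissible $k$ is $2$, where the surface exponent $d-k-1$ is $0$, not positive.)

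The missing idea is to compare in \emph{one} direction only, and to let the transverse sections change \emph{shape}, so that their volume-to-perimeter ratio can increase towards the ends. This is the paper's property~(P): a symmetric $K_0$ and a direction $u$ with $A_0(t)/P_0(t)$ strictly increasing in $|t|$. The argument then runs as follows.
\begin{enumerate}
  \item Stretch $K_0$ by $h$ along $u$. The scaled interior projection has law $\propto A_0$ for every $h$, and the boundary projection tends (by the coarea formula) to $\propto P_0$.
  \item Chebyshev's inequality, with $P_0$ non-increasing, makes the former strictly wider.
  \item The strict one-dimensional comparison orders the three limits, and the moment transfer lemma absorbs the bounded transverse part.
\end{enumerate}
The body in $\R^3$ is $\mathrm{conv}(Q\cup H_a\cup(-H_a))$, with sections $H_{a|t|}$. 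The perimeter $8-4(2-\sqrt2)a|t|$ drops linearly while the area $4-2a^2t^2$ drops only quadratically, which gives (P) for $a<a_*$. Dimensions $d\ge4$ follow by taking products with $[-1,1]$, which preserve the strict ordering of the limits.

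Your thin-plate picture can be repaired along the same lines. Take $L=\{|x_1|/A+|x_2|\le1\}$ and $t=1-|x_2|$. The $x_1/A$-marginal of the volume is $\propto1-\xi^2$, and that of the surface tends, as $\varepsilon\to0$, to $\propto1-|\xi|$, which is strictly narrower. Taking $\varepsilon$ small and then $A$ large gives a counterexample for each $p$. What works there, however, is again a one-directional comparison of slices $\{x_1=s\}$ that change shape, not a convex-order comparison on $L$.
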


In the section devoted to the centrally symmetric case we shall explicitly construct a family of such bodies and indicate the asymptotic behaviour of the defect.
\noindent
Thus, in the case of centrally symmetric bodies the answer to the Zaporozhets--Tarasov conjecture is obtained in explicit form.

\subsection{The general case of higher-order moments}
Let $K \subset \R^2$ be some convex body. In this case several results can be stated.

\begin{theorem}

\begin{enumerate}
    \item If
    $$\cfrac{|K|}{|\sigma K|^2} \geq \cfrac{1}{\sqrt{2(n+2)(n+3)}},$$
    then
     $$\E|XX'|^n \leq \E|YY'|^n.$$

     \item If
    $$\cfrac{|K|}{|\sigma K|^2} \geq \cfrac{1}{n+3},$$
    then
     $$\E|XX'|^n \leq \E|XY|^n.$$

     \item If
    $$\cfrac{|K|}{|\sigma K|^2} \geq \cfrac{1}{2(n+2)},$$
    then
     $$\E|XY|^n \leq \E|YY'|^n.$$
\end{enumerate}

\end{theorem}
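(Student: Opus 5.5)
The plan is to express all three moments through integrals over chords of $K$, obtained from polar coordinates centred at boundary points. We then compare them using only the trivial bound $\rho\le D\le |\sigma K|/2$, where $D$ is the diameter of $K$ and $\rho$ is a chord length. Write $A=|K|$ and $L=|\sigma K|$. For a boundary point $y$ and a direction $\varphi$ pointing into $K$, let $\rho(y,\varphi)$ be the length of the chord issuing from $y$ in direction $\varphi$. Let $\beta\in(0,\pi)$ be the angle this chord makes with the boundary at its other endpoint $y'$.

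\textbf{Key identities.} First, using polar coordinates around $y$ with $dx=r\,dr\,d\varphi$,
\[
\int_{\sigma K}\int_K |x-y|^n\,dx\,dy=\frac{1}{n+2}\int_{\sigma K}\int \rho(y,\varphi)^{n+2}\,d\varphi\,dy .
\]
Second, parametrising $y'$ by the angle $\varphi$ gives $dy'=\rho\,d\varphi/\sin\beta\ge \rho\,d\varphi$. Hence
\[
L^2\,\E|YY'|^n=\int_{\sigma K}\int \frac{\rho^{n+1}}{\sin\beta}\,d\varphi\,dy\ \ge\ \int_{\sigma K}\int \rho^{n+1}\,d\varphi\,dy =: J_{n+1}.
\]
Third, applying the same decomposition to a pair $x,x'$ requires passing to lines, i.e.\ the Blaschke--Petkantschin/Crofton formula $\int_K\int_K|x-x'|^n\,dx\,dx'=\frac{2}{(n+2)(n+3)}\int \sigma(\ell)^{n+3}\,d\ell$. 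We rewrite the measure $d\ell$ through boundary parametrisations: a line through $y$ in direction $\varphi$ satisfies $d\ell=\sin\alpha\,d\varphi\,dy$, counted twice. This bounds the chord-power integrals $I_k=\int\sigma^k\,d\ell$ by the quantities $J_{k-1}$ and by the $XY$-integral. We also record the companion lower bound $L A\,\E|XY|^n\ge \frac{2}{n+2}I_{n+2}$, which follows because $dx\,dy$ over $K\times\sigma K$ dominates $d\ell$ times the one-dimensional integral along the chord.

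\textbf{Comparisons.}
\begin{enumerate}
\item For item 1, use $\sigma^{n+3}\le D^2\sigma^{n+1}$ and $I_{n+1}\le J_{n+1}$ to get
\[
\E|XX'|^n\le \frac{2D^2}{(n+2)(n+3)A^2}\,I_{n+1}\le \frac{L^2}{2(n+2)(n+3)A^2}\cdot L^2\,\E|YY'|^n\cdot\frac{1}{L^2}.
\]
This is at most $\E|YY'|^n$ exactly when $A^2/L^4\ge 1/(2(n+2)(n+3))$, which is the stated hypothesis.
\item For item 2, use $\sigma^{n+3}\le D\sigma^{n+2}\le \frac L2\sigma^{n+2}$ together with the lower bound $\E|XY|^n\ge \frac{2I_{n+2}}{(n+2)AL}$. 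This gives $\E|XX'|^n\le \frac{L^2}{(n+3)A}\,\E|XY|^n$, and the factor is at most $1$ iff $A/L^2\ge 1/(n+3)$.
\item For item 3, use the first identity with $\rho^{n+2}\le \frac L2\rho^{n+1}$. This gives $AL\,\E|XY|^n\le \frac{L}{2(n+2)}J_{n+1}\le\frac{L}{2(n+2)}L^2\,\E|YY'|^n$. The resulting factor $\frac{L^2}{2(n+2)A}$ is at most $1$ iff $A/L^2\ge 1/(2(n+2))$.
\end{enumerate}

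\textbf{Main obstacle.} The delicate step is the bookkeeping of the Jacobians between the measure on lines and the parametrisations by boundary points and angles. Item 3 is self-contained in the $(y,\varphi)$ coordinates. Items 1 and 2 need the correct constants relating $I_k$ to $J_{k-1}$ and to the $XY$-integral, including the factor $2$ from the two endpoints and the direction of the $\sin\alpha\le 1$ inequalities. I would first verify these constants on the disc, where all the integrals are explicit.

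Note that for small $n$ the hypotheses are vacuous, since $A/L^2\le 1/(4\pi)$ by the isoperimetric inequality. So the theorem only carries content for larger $n$, and no sharpness argument is needed.
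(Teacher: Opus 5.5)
Your proof is correct and is essentially the paper's argument. The paper writes $\E|XX'|^n$, $\E|XY|^n$ and $\E|YY'|^n$ as integrals over lines with weights $1/\sin\alpha_i$, using Moseeva's Blaschke--Petkantschin/Crofton formulas; these are equivalent to your $(y,\varphi)$ coordinates via $d\ell=\sin\alpha\,d\varphi\,dy$. It then uses only $|E\cap K|\le|\sigma K|/2$ and $\sin\le 1$, exactly as you do.

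There are some bookkeeping slips to clean up, none of which affect validity:
\begin{enumerate}
\item In item~1, delete the stray factor $\frac{1}{L^2}$. The correct factor is $\frac{L^4}{2(n+2)(n+3)A^2}$, which is what your stated condition $A^2/L^4\ge 1/(2(n+2)(n+3))$ actually uses.
\item The overview should say $I_k\le\frac12 J_k$, not a bound via $J_{k-1}$.
\item Items~1 and~2 can be sharpened, harmlessly since the hypotheses are only sufficient. Using that factor $\frac12$ in item~1 gives $\frac{L^4}{4(n+2)(n+3)A^2}$. In item~2, your own lower bound $AL\,\E|XY|^n\ge\frac{2}{n+2}I_{n+2}$ already gives $\frac{L^2}{2(n+3)A}$ rather than $\frac{L^2}{(n+3)A}$.
\end{enumerate}
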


\begin{corollary}
    For any convex body $K \subset \R^2$ there exists $C_K > 0$ such that
    $$\forall \, n > C_K \quad \E|XX'|^n \leq \E|XY|^n \leq \E|YY'|^n.$$
\end{corollary}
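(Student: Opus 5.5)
The corollary follows from parts 2 and 3 of the preceding theorem. Fix a planar convex body $K$ and set $q = |K|/|\sigma K|^2$. This is a positive number depending only on $K$: the interior is non-empty, so $|K|>0$, and the perimeter is finite and positive. I will also note that $q \le 1/(4\pi)$ by the isoperimetric inequality, although this is not needed. The two sufficient conditions in the theorem have right-hand sides $1/(n+3)$ and $1/(2(n+2))$. Both decrease to $0$ as $n \to \infty$, so for all large $n$ they fall below the fixed number $q$.

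Concretely, I will choose $C_K = \max\bigl(1/q - 3,\; 1/(2q) - 2,\; 1\bigr)$, where the last entry only keeps $C_K>0$. For $n > C_K$ we get $n+3 > 1/q$, i.e.\ $q > 1/(n+3)$. Part 2 then gives $\E|XX'|^n \le \E|XY|^n$. Likewise $2(n+2) > 1/q$ gives $q > 1/(2(n+2))$, and part 3 yields $\E|XY|^n \le \E|YY'|^n$. Chaining the two inequalities gives the claim.

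One caveat is whether the theorem is stated for integer $n$ only or for real $n$. The corollary reads naturally for either, and the threshold argument works verbatim in both cases. If the theorem only covers integers, the corollary is read over integers $n > C_K$.

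There is no real obstacle: the whole content lies in the theorem, and the corollary is a threshold argument. As a consistency check, part 1's condition $q \ge 1/\sqrt{2(n+2)(n+3)}$ is also eventually satisfied. This agrees with the outer inequality $\E|XX'|^n \le \E|YY'|^n$ that the chain produces, though part 1 is not needed here.
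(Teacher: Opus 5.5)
Your proposal is correct and is the paper's argument: the paper also derives the corollary from parts 2 and 3 of the preceding theorem. It observes that the thresholds $1/(n+3)$ and $1/(2(n+2))$ tend to zero while $|K|/|\sigma K|^2$ is a fixed positive number. Your explicit choice of $C_K$ simply makes that observation quantitative.
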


That is, at least for moments of high order we can guarantee the inequalities under study.

\subsection{The case of a circumscribed polygon}

In some cases, using the geometric features of the body, one can derive an exact correspondence between the mean distances under study. However, the following distribution turns out to be more natural for study from this point of view.

\begin{definition}[Projective distribution]
    Consider the distribution of a random point $Y_O$ on the boundary of a polygon obtained by projecting a random point $X$ onto the boundary of the body with centre at some fixed interior point $O$.
    The resulting distribution will be called projective with centre of projection at the point $O$.
\end{definition}

\begin{theorem}
    Let $K \subset \R^d$ be an arbitrary convex body, and let $O \in K$ be some fixed interior point. Then:
    \begin{enumerate}
        % \item The distribution of the distance $|XY_O|$ does not depend on the choice of the point $O$
        \item For any $n \geq 1$:
        \[
        \E|XX'|^n = \frac{2d}{n + 2d} \cdot \E|XY_O|^n.
        \]

        \item Characteristic functions:
        \[
        \E\exp(it \cdot |XY_O|) = \frac{\frac{d}{dt}\bigl[t^{2d} \cdot \E\exp(it \cdot |XX'|)\bigr]}{2d \cdot t^{2d-1}}.
        \]

        \item Distances to the centre:
        \[
        \E|OX|^n = \frac{d}{n + d} \cdot \E|OY_O|^n,
        \]
        \[
        \E\exp(it \cdot |OY_O|) = \frac{\frac{d}{dt}\bigl[t^{d} \cdot \E\exp(it \cdot |XO|)\bigr]}{d \cdot t^{d-1}}.
        \]
    \end{enumerate}
\end{theorem}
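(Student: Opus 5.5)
The plan is a scaling (homothety) argument around the fixed point $O$, combined with the cone-volume description of the projective distribution. First I identify the law of $Y_O$. Writing a uniform point of $K$ in "polar coordinates" centred at $O$, $X = O + R\,(Y_O - O)$, the cone decomposition of $K$ into infinitesimal cones with apex $O$ over boundary patches gives two facts:
\begin{itemize}
\item $R$ and $Y_O$ are independent, and $R$ has density $d r^{d-1}$ on $[0,1]$;
\item $Y_O$ has density $h_O(y)/(d|K|)$ with respect to surface measure on $\sigma K$, where $h_O(y) = \langle y - O, \nu(y)\rangle$ and $\nu(y)$ is the outer normal.
\end{itemize}
This is just the cone-volume measure: $|K| = \frac1d\int_{\sigma K} h_O\, d\sigma$. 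The normal is defined almost everywhere, so there is no problem at non-smooth boundary points.

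Next, for the homothetic bodies $K_t = O + t(K - O)$ I differentiate the domain integral. For any continuous $f$ on $\R^d$, the polar formula gives
\[
\int_{K_t} f(x)\,dx = \int_{\sigma K}\int_0^t f\bigl(O + s(y-O)\bigr)\, s^{d-1} h_O(y)\, ds\, d\sigma(y).
\]
Differentiating at $t=1$ therefore yields
\[
\frac{d}{dt}\Big|_{t=1}\int_{K_t} f = \int_{\sigma K} f\, h_O\, d\sigma = d|K|\,\E f(Y_O).
\]
This is the step I expect to need the most care, namely justifying the polar-coordinate/transport formula for a general (non-smooth) convex body. For continuous $f$ it is elementary once the cone decomposition is accepted.

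Now take a continuous $g$ on $[0,\infty)$ and set $F(t) = \int_{K_t}\int_{K_t} g(|x-x'|)\,dx\,dx'$. Substituting $x = O + t(\cdot)$ gives
\[
F(t) = t^{2d}\int_K\int_K g(t|x-x'|) = t^{2d}|K|^2\,\E g(t|XX'|).
\]
On the other hand, by the derivative formula applied in each variable (the two contributions are equal by symmetry),
\[
F'(t) = 2\, d|K_t|\,|K_t|\,\E g(|X_t Y_{O,t}|) = 2d\,t^{2d-1}|K|^2\,\E g(t|XY_O|),
\]
where the last equality again uses the scaling $K_t \to K$, under which $|X_tY_{O,t}| = t|XY_O|$ in law. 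Equating the two expressions gives the master identity
\[
\frac{d}{dt}\bigl[t^{2d}\,\E g(t|XX'|)\bigr] = 2d\,t^{2d-1}\,\E g(t|XY_O|).
\]
With $g(r) = r^n$ the left side equals $(2d+n)t^{2d+n-1}\E|XX'|^n$, which gives part 1. With $g(r) = e^{ir}$, writing the parameter as $t$, we get exactly the characteristic-function formula of part 2. A byproduct is that the law of $|XY_O|$ does not depend on $O$.

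Part 3 is the one-point version of the same argument. Let $G(t) = \int_{K_t} g(|x-O|)\,dx = t^d|K|\,\E g(t|OX|)$. Differentiating and using the boundary formula gives
\[
\frac{d}{dt}\bigl[t^d\,\E g(t|OX|)\bigr] = d\,t^{d-1}\,\E g(t|OY_O|).
\]
Choosing $g(r) = r^n$ gives $\E|OX|^n = \frac{d}{n+d}\E|OY_O|^n$, and choosing $g(r) = e^{ir}$ gives the characteristic-function identity. The direct route via $|OX| = R|OY_O|$ with $R$ independent confirms this: $\E R^n = d/(n+d)$.
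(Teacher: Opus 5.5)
Your proof is correct, but it is organised differently from the paper's. The paper also rests on the homothetic decomposition $X = O + R\,(Y_O - O)$, with $R$ independent of the direction and of density $d\,r^{d-1}$. From there it:
\begin{enumerate}
\item conditions on the pair $(X_\alpha, X'_\alpha)$;
\item rescales by the larger radius on $\{t_1 < t_2\}$;
\item integrates that radius out to obtain the factor $\frac{2d}{n+2d}$.
\end{enumerate}
Part~2 is then deduced from part~1 by expanding both characteristic functions into moment series and differentiating termwise, which is legitimate because the distances are bounded. Part~3 follows by conditioning on $X_\alpha$ alone.

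You instead differentiate $\int_{K_t}\int_{K_t} g(|x-x'|)\,dx\,dx'$ and $\int_{K_t} g(|x-O|)\,dx$ in the homothety parameter. The two computations encode the same fact in integrated versus differential form: the paper's conditioning on $\max(X_\alpha, X'_\alpha)$, whose density is $2d\,t^{2d-1}$, is exactly what your $F'(t)$ records. Your version gives a master identity valid for every continuous $g$. As a result, the moment formula, the characteristic-function formula and the independence of the law of $|XY_O|$ from $O$ all follow at once, without any series manipulation. The paper's version, in turn, needs no differentiation of domain integrals.

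A few minor points.

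The step you flag as delicate can be bypassed. Write $\int_{K_t} f = \int_{S^{d-1}}\int_0^{t\rho(\omega)} f(O+r\omega)\,r^{d-1}\,dr\,d\omega$, where $\rho$ is the radial function of $K$ about $O$. The $t$-derivative then only involves the law of $Y_O = O + \rho(\omega)\omega$, with $\omega$ of density $\rho(\omega)^d/(d|K|)$. So you need no normals, no surface measure and no cone-volume density; this is precisely the paper's proposition on homothetic coordinates.

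Your intermediate expression $2d\,|K_t|\,|K_t|\,\E g(|X_tY_{O,t}|)$ lacks a factor $1/t$, since $\frac{\mathrm{d}}{\mathrm{d}\tau}\big|_{\tau=t}\int_{K_\tau} f = d\,t^{-1}\,|K_t|\,\E f(Y_{O,t})$. Your final expression $2d\,t^{2d-1}|K|^2\,\E g(t|XY_O|)$ is nevertheless right.

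Finally, because your scaling parameter is positive, the characteristic-function identities come out only for $t>0$. They extend to $t<0$ by complex conjugation.
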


\begin{statement}
    For any circumscribed body $K \subset \R^d$ with centre of the inscribed sphere $O$ one has
    $$Y_O \overset{d}{=} Y.$$
\end{statement}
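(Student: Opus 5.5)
We need to show that for a circumscribed body (one having an inscribed sphere of radius $r$ with centre $O$, and touching every... presumably every support hyperplane at a boundary point is tangent to the sphere, i.e. a circumscribed polytope or more generally a body all of whose boundary support hyperplanes are at distance $r$ from $O$), the projective distribution $Y_O$ coincides with the uniform distribution on $\sigma K$.

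The plan is to compute the density of $Y_O$ with respect to surface measure on $\sigma K$ via the cone-volume decomposition. We write the radial map $X\mapsto Y_O$, sending $X\neq O$ to the intersection of the ray $OX$ with $\sigma K$. For a Borel set $B\subset\sigma K$ we have $\P(Y_O\in B)=|C_B|/|K|$, where $C_B=\bigcup_{y\in B}[O,y]$ is the cone over $B$ with apex $O$. The key identity is the cone-volume formula
\[
|C_B| = \frac1d\int_B \langle y-O, \nu(y)\rangle \, d\sigma(y),
\]
where $\nu(y)$ is the outer unit normal at $y$, defined for $\sigma$-almost every boundary point. We obtain it from polar coordinates (or the divergence theorem applied to the field $x-O$ on $C_B$: the divergence equals $d$, the flux through the lateral surface of the cone vanishes since the field is tangent there, and only the flux through $B$ remains). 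For a polytope this is simply the pyramid formula $\frac1d h_i\,\sigma(F_i\cap B)$ summed over facets.

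Next we use circumscription: for a.e.\ $y\in\sigma K$ the support hyperplane at $y$ is tangent to the inscribed ball, so $\langle y-O,\nu(y)\rangle=h_K(\nu(y))-\langle O,\nu(y)\rangle=r$. Hence $|C_B|=\frac rd\,\sigma(B)$ for every $B$. Taking $B=\sigma K$ gives the classical relation $|K|=\frac rd|\sigma K|$, and dividing yields $\P(Y_O\in B)=\sigma(B)/|\sigma K|=\P(Y\in B)$, which is exactly $Y_O\overset{d}{=}Y$.

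The main obstacle is the rigorous justification of the cone-volume formula in the general (non-polytopal) case, together with fixing the precise meaning of ``circumscribed'', namely that the support function equals $r$ in every normal direction that occurs at $\sigma$-almost every boundary point. For polytopes the argument is elementary: each facet has distance $r$ to $O$, and the cone over a piece of a facet is a pyramid of height $r$. For general bodies we would proceed either by approximation with circumscribed polytopes or via the polar-coordinate Jacobian $\rho(u)^d/(d\langle u,\nu\rangle)$ relating surface measure to the spherical measure; the latter requires a.e.\ differentiability of the boundary, which holds for convex bodies.
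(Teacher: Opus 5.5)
Your proof is correct and is essentially the paper's argument. The paper (for polytopes only) observes that $Y_O$ mixes the uniform distributions on the facets with weights proportional to the pyramid volumes $\frac1d h_i|F_i|$, and tangency of every facet to the insphere forces $h_i=r$; this is exactly your identity $|C_B|=\frac rd\,\sigma(B)$, and your cone-volume density $\frac1d\langle y-O,\nu(y)\rangle$ also extends it to non-polytopal tangential bodies. One slip in your last paragraph: the radial Jacobian is $d\sigma=\rho(u)^{d-1}\langle u,\nu\rangle^{-1}\,du$, and combining it with $dV=\frac1d\rho(u)^d\,du$ is what yields that density, so the factor $\rho(u)^d/(d\langle u,\nu\rangle)$ you wrote is not the right one.
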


\begin{statement}
    For any centrally symmetric body $K \subset \R^d$ with centre of symmetry $O$ one has
    $$\E|XX'|^2 = \cfrac{d}{d+1} \cdot \E|XY_O|^2 = \cfrac{2d}{d+2} \cdot \E|OY_O|^2 = \cfrac{d}{d+2} \cdot \E|Y_OY_O'|^2.$$
\end{statement}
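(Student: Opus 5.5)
We need the chain $\E|XX'|^2 = \frac{d}{d+1}\E|XY_O|^2 = \frac{2d}{d+2}\E|OY_O|^2 = \frac{d}{d+2}\E|Y_OY_O'|^2$ for a centrally symmetric $K$ with $O$ the centre of symmetry, placed at the origin. The plan is to use the previous theorem to pass between interior and projective quantities, and central symmetry to remove the cross terms when squared distances are expanded. Since $X \overset{d}{=} -X$, we have $\E X = 0$. Likewise $Y_O$ is obtained from $X$ by radial projection from $O$, which commutes with $x\mapsto -x$, so $\E Y_O = 0$ as well.

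The first equality is the previous theorem, item 1, with $n=2$: $\E|XX'|^2 = \frac{2d}{2+2d}\E|XY_O|^2 = \frac{d}{d+1}\E|XY_O|^2$. No symmetry is needed here.

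For the remaining equalities we expand squares. Since $X, X'$ are independent with mean zero,
\[
\E|XX'|^2 = \E|X|^2+\E|X'|^2-2\langle \E X,\E X'\rangle = 2\E|OX|^2,
\]
and in the same way $\E|Y_OY_O'|^2 = 2\E|OY_O|^2$. Item 3 of the previous theorem with $n=2$ gives $\E|OX|^2 = \frac{d}{d+2}\E|OY_O|^2$. Hence
\[
\E|XX'|^2 = \frac{2d}{d+2}\E|OY_O|^2 = \frac{d}{d+2}\E|Y_OY_O'|^2,
\]
which closes the chain.

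As a consistency check, take $X$ and $Y_O$ independent, which is how the statement reads $|XY_O|$ since the previous theorem is stated with independent points. Then $\E|XY_O|^2 = \E|OX|^2+\E|OY_O|^2 = \left(\frac{d}{d+2}+1\right)\E|OY_O|^2$. Multiplying by $\frac{d}{d+1}$ gives $\frac{d}{d+1}\cdot\frac{2d+2}{d+2}\E|OY_O|^2 = \frac{2d}{d+2}\E|OY_O|^2$, in agreement with the above.

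The only point needing care is the meaning of $|XY_O|$. If $Y_O$ were the projection of $X$ itself, then $|XY_O|$ would be a within-pair distance and the previous theorem's formula would not apply in the same form. I will therefore take $Y_O$ to be independent of $X$, as in the previous theorem. After that the argument is routine; the real content lies in the previous theorem.
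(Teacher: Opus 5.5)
Your proof is correct. Your reading of $|XY_O|$ with $Y_O$ independent of $X$ is the one the paper uses: in its proof of item~1, $Y_O$ arises as the independent copy $X'$ conditioned on $X'_\alpha = 1$.

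The route differs somewhat from the paper's. You take items~1 and~3 of the projective-distribution theorem as given, and use central symmetry only to centre the distributions, so that $\E|XX'|^2 = 2\E|OX|^2$ and $\E|Y_OY_O'|^2 = 2\E|OY_O|^2$.

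The paper instead proves a separate second-moment theorem for an arbitrary interior point $O$. Conditioning on the radial coordinates, it expands $\E|\alpha\gamma(t)-\beta\gamma(s)|^2 = (\alpha^2+\beta^2)\,\E|OY_O|^2 - 2\alpha\beta\,|O\E Y_O|^2$ and integrates against the radial densities $d\,\alpha^{d-1}$. This gives $\E|XX'|^2$, $\E|XY_O|^2$ and $\E|Y_OY_O'|^2$ explicitly, each with a correction term in $|O\E Y_O|^2$. The proposition then follows because $\E Y_O = O$ for the centre of symmetry.

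Your version is shorter and reuses results already proved. The paper's version is more general: it shows the same chain of equalities holds whenever $O$ is the barycentre of $Y_O$, and by Brouwer's theorem such a point exists for every convex body, not only symmetric ones. It also computes $\E|XY_O|^2$ directly rather than through item~1, and that computation agrees with your consistency check $\E|XY_O|^2 = \bigl(1+\tfrac{d}{d+2}\bigr)\E|OY_O|^2$.
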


\begin{corollary}
    For any circumscribed centrally symmetric polytope $K \subset \R^d$ one has
    $$\E|XX'|^2 = \cfrac{d}{d+1} \cdot \E|XY|^2 = \cfrac{2d}{d+2} \cdot \E|OY|^2 = \cfrac{d}{d+2} \cdot \E|YY'|^2.$$
\end{corollary}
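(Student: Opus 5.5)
The corollary follows by combining the second Proposition (the centrally symmetric identities for the projective distribution) with the first Proposition ($Y_O \overset{d}{=} Y$ for circumscribed bodies). No new computation is needed; we only have to check that both hypotheses hold at the same point $O$.

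\textbf{Step 1: the two centres coincide.} Let $K$ be a circumscribed centrally symmetric polytope with centre of symmetry $O_s$. Circumscribed means that it has an inscribed ball tangent to every facet; let $O$ be the centre of this ball. The central reflection $x \mapsto 2O_s - x$ maps $K$ onto itself, hence facets to facets, so it sends a ball tangent to all facets to another such ball. I argue that the inscribed ball is unique. The distance function $x\mapsto\min_i \mathrm{dist}(x,F_i)$ over the facet hyperplanes is concave on $K$. A point equidistant from all facet hyperplanes at distance $r$ satisfies $\langle a_i,x\rangle = b_i - r$ for the unit outer normals $a_i$. Since the $a_i$ span $\R^d$ (the polytope is bounded), these equations determine $x$ once $r$ is fixed. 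Moreover $r$ is fixed: it equals $d|K|/|\sigma K|$, as one sees by decomposing $K$ into pyramids over the facets with apex $x$. Hence the centre is unique and invariant under the symmetry, so $O = O_s$.

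\textbf{Step 2: substitute.} With $O$ both the centre of symmetry and the incentre, the second Proposition gives
$$\E|XX'|^2 = \frac{d}{d+1}\,\E|XY_O|^2 = \frac{2d}{d+2}\,\E|OY_O|^2 = \frac{d}{d+2}\,\E|Y_OY_O'|^2 .$$
The first Proposition gives $Y_O \overset{d}{=} Y$. Consequently $(X,Y_O)$ and $(X,Y)$ have the same joint law, since in both pairs the two points are independent; likewise $(Y_O,Y_O')$ and $(Y,Y')$ have the same joint law. Replacing $Y_O$ by $Y$ in each expectation yields the claimed chain of equalities.

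\textbf{Main obstacle.} The only delicate point is the independence convention. In the definition, $Y_O$ is obtained by projecting some interior point. In $\E|XY_O|^2$, this must be a point independent of $X$, which is how the second Proposition's identity is understood. I would state this convention explicitly, since only then does the equality in distribution of the marginals transfer to the pairs.
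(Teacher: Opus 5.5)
Your proof is correct and follows the paper's route. The paper likewise combines the centrally symmetric second-moment identity, which comes from its theorem with $\E Y_O = O$, with $Y_O \overset{d}{=} Y$ at the incentre; your Step~1, showing that the incentre is unique and therefore coincides with the centre of symmetry, makes explicit a point the paper leaves tacit.
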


This equality constitutes an exact correspondence between the interior and the boundary moments of the second order.

\subsection{The case of a triangle}

The triangle is the simplest non-symmetric example, and for it the conjecture can not only be confirmed but also strengthened: an inequality holds between the one-dimensional projections of the distributions of the interior and the boundary points onto each direction separately. By the Cauchy--Crofton formula, such a per-projection inequality automatically implies the comparison of the full mean distances as well.

\begin{theorem}
    For any triangle $K \subset \R^2$ and any direction $u$ one has
    $$\E|P_u(X) - P_u(X')| \;\leq\; \E|P_u(Y) - P_u(Y')|.$$
    In particular, $\Delta(K) \leq \theta(K)$.
\end{theorem}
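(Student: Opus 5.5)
Since $\E|Z-Z'| = 2\int_{\R} F_Z(t)\bigl(1-F_Z(t)\bigr)\,dt$ for any real random variable $Z$ with distribution function $F_Z$, I plan to work with the two projected distributions explicitly and compare these integrals. By affine invariance of the ratio along the fixed direction, I would normalise coordinates so that the projections of the vertices onto $u$ are $0$, $s$, $1$ with $s\in[0,1]$. The perpendicular coordinates of the vertices are left free; call the vertices $A=(0,\alpha)$, $B=(s,\beta)$, $C=(1,\gamma)$.

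\textbf{Interior distribution.} The density of $P_u(X)$ is the tent function $2t/s$ on $[0,s]$ and $2(1-t)/(1-s)$ on $[s,1]$. It depends only on $s$, so $g_X(s):=\E|P_u(X)-P_u(X')|$ is an explicit elementary function, obtained by a routine polynomial integration.

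\textbf{Boundary distribution.} $P_u(Y)$ is a mixture of three uniform laws: on $[0,s]$ (side $AB$), on $[s,1]$ (side $BC$) and on $[0,1]$ (side $AC$). The weights are $w_i=L_i/|\sigma K|$, where $L_i$ are the side lengths. Writing $D_{ij}$ for the mean distance between independent points drawn from the $i$-th and $j$-th uniform laws, the boundary functional is the quadratic form
\[
g_Y(w)=\sum_{i,j} w_i w_j D_{ij}.
\]
Each $D_{ij}$ is an explicit polynomial in $s$.

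Since $|x-y|$ is conditionally negative definite, $g_Y$ is concave on the simplex. Hence its minimum over the set of admissible weight vectors $w$ (those arising from actual triangles with vertex projections $0,s,1$) is attained on the boundary of that set. The constraints are $L_{AB}\ge s$, $L_{BC}\ge 1-s$, $L_{AC}\ge 1$ together with the triangle inequalities. The admissible weights form a two-parameter family in $(\beta-\alpha,\gamma-\alpha)$, and the limiting regimes I expect to be extremal are:
\begin{enumerate}
\item the flat triangle (height $\to 0$), where $w=(s/2,(1-s)/2,1/2)$ and $P_u(Y)$ is exactly uniform on $[0,1]$, giving $g_Y=1/3$;
\item triangles that degenerate by stretching in the perpendicular direction, where one or two sides dominate and $w$ tends to a vertex or edge of the simplex.
\end{enumerate}
The plan is to show $g_X(s)\le g_Y(w)$ in each of these boundary regimes. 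In the flat case this means checking that the tent law has smaller mean difference than the uniform law on the same interval, which is a direct computation.

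\textbf{Main obstacle.} The hardest step is describing the admissible weight set precisely enough to locate the minimiser of the concave form $g_Y$. The feasible region in the simplex is not a polytope: its boundary curves are parametrised by the shape of the triangle. If a direct description fails, I would instead minimise $g_Y$ over the explicit parameters $(\beta-\alpha,\gamma-\alpha)$ for fixed $s$. One route is to show that $g_Y$ is monotone in the height of the triangle after fixing the ratio $(\beta-\alpha):(\gamma-\alpha)$, so that the infimum is approached by the flat triangle. That flat limit has $w=(s/2,(1-s)/2,1/2)$ independently of the ratio and gives $g_Y=1/3$, already computed in case 1, so the required inequality $g_X(s)\le 1/3$ would then close the argument.

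Finally, integrating the per-direction inequality over $u\in\S^1$ and applying the Cauchy--Crofton formula, which expresses $\E|ZZ'|$ as a constant multiple of the average over directions of $\E|P_u(Z)-P_u(Z')|$, yields $\Delta(K)\le\theta(K)$.
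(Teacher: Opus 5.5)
Your setup is sound: the vertex projections $0,s,1$, the tent law for $P_u(X)$, the three-component uniform mixture for $P_u(Y)$, and concavity of $g_Y$ from conditional negative definiteness. However, the decisive step is left open, and the route you propose for closing it rests on a false claim: the flat triangle does not minimise $g_Y$. Take $\alpha=\beta=0$, $\gamma=H\to\infty$. Then $L_{AB}=s$ stays bounded while $L_{BC},L_{AC}\sim H$, so $w\to(0,\tfrac12,\tfrac12)$. Consequently $P_u(Y)$ converges to the equal mixture of the uniform laws on $[0,1]$ and $[s,1]$, whose mean difference is $(s^2-s+2)/6$. This is strictly below $1/3$ for every $s\in(0,1)$; it equals $7/24$ at $s=\tfrac12$. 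Three things follow. Along this ratio, the height-monotonicity you hope for would have to go the other way. The infimum lies in your regime 2, not regime 1. And proving $g_X(s)\le 1/3$ would not establish the theorem. (Also, $w$ never approaches a vertex of the simplex: the triangle inequality forces $\max_i w_i\le\tfrac12$.)

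The missing observation, which the paper exploits, is that the density of $P_u(Y)$ is constant on $[0,s]$ and on $[s,1]$. So its law depends on $w$ only through the single number $a:=w_1+s\,w_3$, the value at $s$ of its distribution function $F$. Hence $g_Y=2\int_0^1F(1-F)\,dt=\tfrac13\bigl(-2a^2+(2s+1)a+1-s\bigr)$ is a concave quadratic in one variable. With $g_X(s)=\tfrac{2}{15}(s^2-s+2)$, the desired inequality becomes $10a^2-(10s+5)a+2s^2+3s-1\le 0$. No precise description of the admissible set is needed: the triangle inequality alone gives $s/2\le a\le(1+s)/2$, and at both endpoints the left-hand side equals $-\tfrac12(s^2-s+2)<0$. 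In your own language, this amounts to relaxing the admissible set to the polytope $\{\max_i w_i\le\tfrac12\}$. Its vertices $(\tfrac12,\tfrac12,0)$, $(\tfrac12,0,\tfrac12)$, $(0,\tfrac12,\tfrac12)$ give $g_Y=\tfrac13$, $\tfrac16(s^2-s+2)$, $\tfrac16(s^2-s+2)$ respectively. By concavity, the minimum over the polytope is $\tfrac16(s^2-s+2)=\tfrac54\,g_X(s)$, which closes the argument.
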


Moreover, the gap is at least a constant times the perimeter of the body: for any triangle $K$
$$\theta(K) - \Delta(K) \;\geq\; \tfrac{7}{240}\,|\sigma K|.$$

\section{Continuity of the functionals}
\label{sec:continuity}

The proofs of all the facts that we are going to present are sometimes much easier to carry out for polygons, and sometimes even for bodies with a smooth boundary. As is well known, any convex body can be approximated from inside by bodies of these classes in the Hausdorff metric. We only need to obtain the continuity of all the functionals in this metric.

In his work A.\,S.~Tokmachev~\cite{Tokmachev2022} showed that the functional $\theta^p$ is continuous in the Hausdorff metric. The continuity of the functionals $|K|$ and $|\sigma K|$ we shall regard as known. It only remains to understand what happens with the functional $\Delta^p$.

As we have already noted in the introduction, this functional is not continuous in the general sense, but a weaker statement does hold:

\begin{statement}
Let $f \colon \R \to \R$ be a measurable function, bounded on every segment $[0, C]$, and let $K_i \subset K$ for $i = 1, 2, 3, \ldots$ be a sequence of convex bodies converging in the Hausdorff metric to a body $K$ such that $|K| > 0$. Then
$$\limto{i} \E_{K_i} f(|XX'|) = \E_K f(|XX'|).$$
\end{statement}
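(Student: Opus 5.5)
Since $K_i \subset K$, the key observation is that the uniform distribution on $K_i$ is the uniform distribution on $K$ conditioned to lie in $K_i$. I write
\[
\E_{K_i} f(|XX'|) = \frac{1}{|K_i|^2}\int_{K_i}\int_{K_i} f(|x-x'|)\,dx\,dx' = \frac{1}{|K_i|^2}\int_{K}\int_{K} f(|x-x'|)\,\Ind_{K_i}(x)\Ind_{K_i}(x')\,dx\,dx',
\]
and similarly for $K$ without the indicators. The plan is to prove that $|K_i|\to|K|$ and that the double integrals converge, using that $f(|x-x'|)$ is bounded on $K\times K$. Indeed, $|x-x'|\le \mathrm{diam}\,K =: C$, and $f$ is bounded on $[0,C]$ by hypothesis, say $|f|\le M$ there. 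This is exactly where the hypothesis on $f$ is used. Measurability of $(x,x')\mapsto f(|x-x'|)$ follows from the measurability of $f$ composed with a continuous map, which is fine for Borel $f$. If $f$ is only Lebesgue measurable, I will note that the pushforward of Lebesgue measure on $K\times K$ under $|x-x'|$ is absolutely continuous (its density is given by the covariogram), so the composition is still measurable.

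Next I estimate the difference of the double integrals by
\[
M\int_K\int_K \bigl|\Ind_{K}(x)\Ind_K(x')-\Ind_{K_i}(x)\Ind_{K_i}(x')\bigr|\,dx\,dx' \le 2M\,|K|\,|K\setminus K_i| = 2M|K|\bigl(|K|-|K_i|\bigr).
\]
Everything therefore reduces to $|K_i|\to|K|$, which is the continuity of volume in the Hausdorff metric; the paper regards this as known. Since $|K|>0$, the normalisations $1/|K_i|^2\to 1/|K|^2$ as well. Combining the two convergences, with the numerators uniformly bounded by $M|K|^2$, gives the claim.

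The only genuinely delicate point, and the one I expect to need care, is to justify that $|K\setminus K_i|\to 0$ and not merely $|K_i|\to|K|$; for nested sets these are the same statement. If I wanted to avoid citing continuity of volume, I would argue directly. Fix an interior point $o$ of $K$. For $\varepsilon>0$, Hausdorff convergence together with convexity implies that for large $i$ the set $K_i$ contains the homothetic copy $o+(1-\delta)(K-o)$, with $\delta\to 0$ as $\varepsilon\to0$. Otherwise a separating hyperplane would produce a point of $K$ at distance of order $\delta\cdot r$ from $K_i$, where $r$ is the inradius about $o$. This yields $|K\setminus K_i|\le (1-(1-\delta)^d)|K|\to0$.
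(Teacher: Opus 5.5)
Your proof is correct and is essentially the paper's argument. The paper splits $K\times K$ into $K_i\times K_i$ plus the pieces involving $B_i=K\setminus K_i$, and it disposes of the latter using the boundedness of $f$ and $|B_i|\to 0$; your indicator estimate $2M|K|\,|K\setminus K_i|$ does exactly this, and your homothety argument for $|K\setminus K_i|\to 0$ and the measurability remark are correct additions (your direct comparison also makes the paper's closing appeal to weak convergence of $X_i$, which alone would not suffice for a merely measurable $f$, unnecessary).
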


\begin{proof}
Let $A_i = K_i \cap K$, $B_i = K \triangle K_i$. Also denote by $X$ a random point taken according to the uniform distribution inside $K$. By $X_i$ we shall mean a point uniformly distributed in $A_i$. Finally, by $Z_i$ we shall mean a random point uniformly distributed inside $B_i$. All points are assumed independent. In this notation
$$\E f(|XX'|) = \cfrac{|A_i|^2}{|K|^2} \cdot \E f(|X_iX_i'|) + \cfrac{2|A_i||B_i|}{|K|^2} \cdot \E f(|X_iZ_i|) + \cfrac{|B_i|^2}{|K|^2} \cdot \E f(|Z_iZ_i'|).$$

From the condition of convergence in the Hausdorff metric it follows that
$$\limto{i} |A_i| = |K|, \qquad \limto{i} |B_i| = 0,$$
and hence
$$\limto{i} \cfrac{|A_i|}{|K|} = 1, \qquad \limto{i} \cfrac{|B_i|}{|K|} = 0.$$

Since $f$ is bounded and the diameter of the body is finite, the second and the third summands tend to zero by Lebesgue's dominated convergence theorem. The distribution of the point $X_i$ converges weakly to the distribution of the point $X$, which together with the boundedness of the function $f$ entails the convergence of the expectations.
\end{proof}

In particular, we shall use this statement for the standard approximation of convex bodies by polygons from inside and for $f(x) = |x|^n$, which will allow us to carry out the proofs of the facts only for polygons.

\section{One-dimensional centrally symmetric distributions}
\label{sec:linear-symmetric}

Before passing to the proof of the two-dimensional and the multi-dimensional results, let us study the mean distances between two points taken independently from a general centrally symmetric distribution, since in what follows we shall try to reduce everything precisely to this case. Since on the whole only the distance between the points, or to the centre of symmetry, will matter to us, the latter may be taken to be zero without loss of generality, simply by applying a shift of the distribution.
\begin{definition}
Let a distribution $P$ on $\R$, symmetric with respect to $0$, be given. By its half we shall mean the distribution $P_{1/2}$ on $\R_+$ such that
    $$\forall A \subset \R_+ \quad P_{1/2}(A) = 2 \cdot P(A).$$

In essence, $P_{1/2}$ is the distribution of $|X|$, where $X \sim P$.
\end{definition}

\begin{definition}\label{def:wider}
We shall say that a centrally symmetric distribution $P$ is \textit{wider} than a centrally symmetric distribution $Q$ if $P_{1/2} \succ Q_{1/2}$.
\end{definition}
It is clear that a centrally symmetric distribution is essentially determined by its half, so it is natural to study the properties of $P$ precisely in terms of $P_{1/2}$. This is what the next proposition is about.
\begin{statement}
    Let $P$ be a distribution symmetric with respect to $0$, and let $f \colon \R \to \R$ be some function. Let $X$, $Y \sim P$ be independent, and let $X_{1/2}$, $Y_{1/2} \sim P_{1/2}$ be also independent of each other and of the variables already introduced. Then
    $$\E f(|X-Y|) = \E \left[\cfrac{f(|X_{1/2}-Y_{1/2}|) + f(|X_{1/2}+Y_{1/2}|)}{2}\right].$$
\end{statement}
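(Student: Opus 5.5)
The plan is to represent a symmetric random variable as a random sign times its half, and then average over the signs. Let $\varepsilon, \eta$ be independent Rademacher signs, i.e. $\P(\varepsilon = 1) = \P(\varepsilon=-1) = 1/2$, independent of $X_{1/2}, Y_{1/2}$. The first step is to check that $\varepsilon X_{1/2} \overset{d}{=} X$. For a Borel set $A \subset \R$ write $A = A_+ \cup A_-$ with $A_+ = A \cap (0,\infty)$, $A_- = A\cap(-\infty,0)$, and possibly the point $0$. Then
$$\P(\varepsilon X_{1/2} \in A) = \tfrac12 P_{1/2}(A_+) + \tfrac12 P_{1/2}(-A_-) + \Ind_{0\in A}\P(X_{1/2}=0).$$
By the definition of the half, $\tfrac12 P_{1/2}(A_+) = P(A_+)$. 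By symmetry of $P$, $\tfrac12 P_{1/2}(-A_-) = P(-A_-) = P(A_-)$.

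The only care needed is the atom at $0$. The literal definition $P_{1/2}(A) = 2P(A)$ would double it, so I will use the convention stated after the definition: $P_{1/2}$ is the law of $|X|$. This gives $P_{1/2}(\{0\}) = P(\{0\})$ and $P_{1/2}(A)=2P(A)$ for $A\subset(0,\infty)$. With this convention the identity $\varepsilon|X| \overset{d}{=} X$ holds, which I will verify directly from the symmetry $X \overset{d}{=} -X$. In fact, more simply, $\varepsilon |X|$ has the same law as $X$ whenever $\varepsilon$ is independent of $X$ and $X$ is symmetric, since $\varepsilon X \overset{d}{=} X$ and $|\varepsilon|X|| = |X|$. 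Thus the pair $(X,Y)$ has the same joint law as $(\varepsilon X_{1/2}, \eta Y_{1/2})$ with all four variables independent.

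Next I compute by conditioning on the signs:
$$\E f(|X-Y|) = \E f(|\varepsilon X_{1/2} - \eta Y_{1/2}|) = \tfrac14\sum_{\varepsilon,\eta\in\{\pm1\}} \E f(|\varepsilon X_{1/2} - \eta Y_{1/2}|).$$
Since $|\varepsilon a - \eta b| = |a - \varepsilon\eta b|$, the four terms pair up. The cases $\varepsilon\eta=1$ give $f(|X_{1/2}-Y_{1/2}|)$, and the cases $\varepsilon\eta=-1$ give $f(|X_{1/2}+Y_{1/2}|)$. Averaging yields exactly the claimed formula.

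The only genuine subtlety is the measure-theoretic bookkeeping. This consists of the atom at zero, as above, and the requirement that $f$ be measurable with the expectations defined (e.g. $f\ge 0$ or integrable), so that the conditioning and Fubini's theorem are justified. I would state that the identity holds whenever either side is well defined, with the splitting over signs justified by Fubini applied to the product measure.
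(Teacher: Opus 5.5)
Your proof is correct and follows the paper's argument: the paper likewise writes $X \overset{d}{=} S_1 X_{1/2}$ and $Y \overset{d}{=} S_2 Y_{1/2}$ with independent Rademacher signs, then averages over the four sign combinations, which pair up into the $|X_{1/2}-Y_{1/2}|$ and $|X_{1/2}+Y_{1/2}|$ terms. You also handle a possible atom at $0$ explicitly, by reading $P_{1/2}$ as the law of $|X|$ rather than through the literal doubling formula; this is a sensible clarification that the paper leaves implicit.
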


\begin{proof}
    Let $S_1, S_2$ be binary variables, independent of each other and of the variables already introduced, equal to one with probability $\tfrac{1}{2}$ and to minus one with probability $\tfrac{1}{2}$. Note that $X \overset{d}{=} S_1 X_{1/2}$ and $Y \overset{d}{=} S_2 Y_{1/2}$. Then
    \begin{align*}
        \E f(|X-Y|) &= \E f(|S_1 X_{1/2} - S_2 Y_{1/2}|) \\
        &= \frac{1}{4}\,\E\Big[ f(|X_{1/2} - Y_{1/2}|) + f(|-X_{1/2} + Y_{1/2}|) \\
        &\qquad\quad {}+ f(|X_{1/2} + Y_{1/2}|) + f(|-X_{1/2} - Y_{1/2}|) \Big] \\
        &= \cfrac{\E f(|X_{1/2} - Y_{1/2}|) + \E f(|X_{1/2} + Y_{1/2}|)}{2}.
    \end{align*}
\end{proof}

Intuitively, a wider distribution ought to generate larger distances between independent points. The most natural manifestation of this fact would be the stochastic dominance of the distance between the first pair of points over the distance between the second; however, such a fact is obviously false in general. The intuition is realised in the following proposition.
\begin{statement}\label{stmt:wider-convex}
    Let $f \colon \R \to \R$ be a non-decreasing convex function (not necessarily strictly). Let $P, Q$ be two distributions symmetric with respect to $0$, with $P$ \textit{wider} than $Q$. Let $X, X' \sim P$ and $Y, Y' \sim Q$ be independent of each other. Then
    $$\E f(|X-X'|) \;\geq\; \E f(|X-Y|) \;\geq\; \E f(|Y-Y'|).$$
\end{statement}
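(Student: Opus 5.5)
It suffices to prove the single comparison: for independent $X\sim P$ and symmetric independent $Z, W$ with $Z$ wider than $W$,
$$\E f(|X-Z|) \ge \E f(|X-W|).$$
The two claimed inequalities then follow. Taking $Z=X'$ and $W=Y$ with $X\sim P$ gives the left inequality. Taking $Z=X$, $W=Y'$, with $Y\sim Q$ in the role of the fixed point, gives the right one. This also uses $\E f(|X-Y|)=\E f(|Y-X|)$.

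By the preceding proposition (the halving formula), applied with $X$ and $Z$ replaced by their halves $a=|X|$ and $b=|Z|$, we get
$$\E f(|X-Z|) = \E\, g(a,b),\qquad g(a,b)=\frac{f(|a-b|)+f(a+b)}{2},\quad a,b\ge 0.$$
This identity is really the computation done in the proof of that proposition, conditioning on the halves with independent signs, and it does not need identical distributions. Conditioning on $a$, the claim reduces to showing that for each fixed $a\ge 0$ the function $b\mapsto g(a,b)$ is non-decreasing on $\R_+$. Once this holds, the stochastic dominance $|Z|\succ|W|$ from the definition of ``wider'' gives $\E g(a,|Z|)\ge \E g(a,|W|)$, and integrating over $a$ finishes the argument.

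The key step, and the only real obstacle, is the monotonicity of $h(b)=f(|a-b|)+f(a+b)$ in $b\ge0$. Let $\phi(t)=f(|t|)$ on $\R$. It is convex, as the composition of the convex function $|t|$ with the non-decreasing convex $f$, and it is even. Then $h(b)=\phi(a-b)+\phi(a+b)=\phi(b-a)+\phi(b+a)$. Take $0\le b_1<b_2$. The four points $b_1-a,\,b_1+a,\,b_2-a,\,b_2+a$ are shifted pairs: the points $b_2\pm a$ are obtained from $b_1\pm a$ by translating both by $b_2-b_1$.

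For convex $\phi$, the increment $\phi(t+\delta)-\phi(t)$ is non-decreasing in $t$. Hence
$$h(b_2)-h(b_1)=[\phi(b_1+a+\delta)-\phi(b_1+a)]+[\phi(b_1-a+\delta)-\phi(b_1-a)]\ge[\phi(-b_1-a+\delta)-\phi(-b_1-a)]+[\phi(b_1-a+\delta)-\phi(b_1-a)].$$
This step uses $b_1+a\ge -b_1-a$.

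A cleaner way to finish is to write $h(b)=\E\,\phi(b+\varepsilon a)$ with $\varepsilon=\pm1$ equiprobable. This is a convex function of $b$. By evenness of $\phi$ and symmetry of $\varepsilon$ it is even in $b$, and an even convex function is non-decreasing on $\R_+$. This settles monotonicity without case analysis.

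Finally, check integrability. If the expectations are infinite, the inequalities hold trivially in the extended sense, or one assumes finiteness as is implicit.
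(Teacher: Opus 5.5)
Your proof is correct and is essentially the paper's argument. Both use the halving identity $\E f(|X-Z|)=\E g(|X|,|Z|)$, monotonicity of $g$, and the dominance $|Z|\succ|W|$; the paper implements the dominance through the quantile coupling $F$ with $F(y)\ge y$, while you apply it conditionally on the other argument. As you note, the halving identity does not need identical distributions, and the paper uses this tacitly for the mixed term. Your monotonicity proof, which views $b\mapsto f(|b-a|)+f(b+a)$ as an even convex function of $b$, is a neater replacement for the paper's case split, and the unfinished increment computation before it can simply be deleted.
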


\begin{proof}
    Let us look at $\E f(|X-X'|)$. By the proposition just proved,
    $$\E f(|X-X'|) = \E \left[\cfrac{f(|X_{1/2}-X_{1/2}'|) + f(|X_{1/2}+X_{1/2}'|)}{2}\right].$$

    \noindent
    Let us introduce the auxiliary function $g(x, y) = \cfrac{f(|x-y|) + f(|x+y|)}{2}$. Note that $g$ is non-decreasing in both variables on $\R_+ \times \R_+$. By symmetry it suffices to show monotonicity in $x$.

    \noindent
    First case: $x \geq y$. Then
    $$g(x,y) = \cfrac{f(x-y) + f(x+y)}{2}$$
    is non-decreasing in $x$, since $f$ is non-decreasing.

    \noindent
    Second case. Let $x_1 < x_2 \leq y$:
    $$\cfrac{f(y-x_1) + f(x_1+y)}{2} \leq \cfrac{f(y-x_2) + f(x_2+y)}{2}.$$
    Note that the sum of the arguments on the left-hand side and on the right-hand side equals $2y$, while their difference equals $2x_1$ and $2x_2$ respectively. It remains to use the fact that the sum of the values of a convex function does not increase when the arguments come closer together while their sum is preserved.

    \noindent
    Thus, it remains to show that there exists a non-decreasing function $F \colon \R_+ \to \R_+$ for which $X_{1/2} \overset{d}{=} F(Y_{1/2})$ and $F(y) \geq y$ for all $y \geq 0$. In this case, using the monotonicity of $g$ and the inequality $F(y) \geq y$,
    \begin{align*}
        \E f(|X-X'|) &= \E g(X_{1/2}, X'_{1/2}) = \E g(F(Y_{1/2}), F(Y'_{1/2})) \\
        &\geq \E g(F(Y_{1/2}), Y'_{1/2}) \geq \E g(Y_{1/2}, Y'_{1/2}) = \E f(|Y - Y'|).
    \end{align*}
    The middle term here equals $\E g(X_{1/2}, Y'_{1/2}) = \E f(|X - Y|)$ by the identity proved above, applied to the pair $X, Y'$; thereby the three-term chain is obtained as well.

    The required $F$ is supplied by Smirnov's transform: for the distribution functions $F_{X_{1/2}}, F_{Y_{1/2}}$ put $F = F^{-1}_{X_{1/2}} \circ F_{Y_{1/2}}$, so that $X_{1/2} \overset{d}{=} F(Y_{1/2})$. The function $F$ is non-decreasing as a composition of non-decreasing functions, and the inequality $F(y) \geq y$ is equivalent to the stochastic majorisation $F_{X_{1/2}} \leq F_{Y_{1/2}}$, that is, to the \textit{width} relation.
\end{proof}

Note that, in particular, the lemma can be applied to the functions $f(t) = t^n, n \geq 1$, which in the future will allow us to compare the moments of the distances between various random points.

\begin{statement}
    Let $P$, $Q$ be two symmetric distributions, with $P$ \textit{wider} than $Q$. Let $X, X' \sim P$, $Y, Y' \sim Q$ be independent of each other. Then:
    \begin{enumerate}
        \item $\E|X|^n \geq \E|Y|^n$ for all $n > 0$;
        \item $\E|X - X'|^n \geq \E|Y - Y'|^n$ for all $n \geq 1$.
    \end{enumerate}
\end{statement}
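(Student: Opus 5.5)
Both claims follow from the width relation and Proposition~\ref{stmt:wider-convex}, so the plan is mostly bookkeeping.

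For the first claim we use that $P$ being wider than $Q$ means $P_{1/2}\succ Q_{1/2}$, and that $|X|\sim P_{1/2}$, $|Y|\sim Q_{1/2}$. Stochastic dominance gives $\E h(|X|)\ge \E h(|Y|)$ for every non-decreasing $h\colon\R_+\to\R$; we take $h(t)=t^n$, which is non-decreasing on $\R_+$ for every $n>0$. To make this explicit, we use the Smirnov coupling from the proof of Proposition~\ref{stmt:wider-convex}. There is a non-decreasing $F$ with $F(y)\ge y$ and $|X|\overset{d}{=}F(|Y|)$. Hence
\[
\E|X|^n=\E F(|Y|)^n\ge \E|Y|^n .
\]
Alternatively, write $\E|X|^n=\int_0^\infty n t^{n-1}\,\P(|X|>t)\,dt$ and compare the tail probabilities pointwise. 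Either route also covers the case of infinite moments: if the right-hand side is infinite, so is the left.

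For the second claim we apply Proposition~\ref{stmt:wider-convex} with $f(t)=t^n$ for $t\ge 0$, extended by, say, $f(t)=0$ for $t<0$. This is the only point needing care: the proposition asks for a non-decreasing convex $f$ on $\R$, and $t\mapsto |t|^n$ is not monotone on all of $\R$. With $f(t)=\max(t,0)^n$ we have a function that is non-decreasing on $\R$, and convex for $n\ge1$ as the composition of the convex non-decreasing map $s\mapsto s^n$ on $\R_+$ with the convex map $t\mapsto\max(t,0)$. In the proposition $f$ is only ever evaluated at non-negative arguments $|X-X'|$ and $|x\pm y|$, so this extension changes nothing. The proposition then gives directly
\[
\E|X-X'|^n\ge \E|X-Y|^n\ge \E|Y-Y'|^n .
\]
The outer inequality is the statement.

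The only step that might need attention is integrability. If moments are infinite, the inequalities still make sense in $[0,\infty]$. The monotone comparisons in the proof of Proposition~\ref{stmt:wider-convex} rely on $g\ge0$ being pointwise monotone, so they pass to expectations in $[0,\infty]$ without change. Apart from this remark, the proof is a direct specialization of the earlier results.
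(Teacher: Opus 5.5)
Your proposal is correct and follows the paper's proof: part 1 comes from the definition of width, i.e.\ stochastic dominance of the halves, and part 2 from Proposition~\ref{stmt:wider-convex}. Your extension $f(t)=\max(t,0)^n$ is a harmless tidying of that proposition's monotonicity hypothesis. For part 1, lean on your tail-integral comparison rather than the deterministic Smirnov map. A non-decreasing $F$ with $|X|\overset{d}{=}F(|Y|)$ need not exist when $Q_{1/2}$ has atoms (e.g.\ $Q=\delta_0$); the quantile coupling $F_{X_{1/2}}^{-1}(U)\ge F_{Y_{1/2}}^{-1}(U)$ with a common uniform $U$ always works.
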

\begin{proof}

    The first statement follows directly from the definition of the \textit{width} relation.

    The second follows directly from the proposition above.
\end{proof}

For the construction of the counterexample in higher dimensions we shall need a \emph{strict} version of this chain. Let us introduce a suitable strengthening of the width relation.

\begin{definition}\label{def:strictly-wider}
We shall say that $P$ is \emph{strictly wider} than $Q$ if $F_{P_{1/2}}(t) < F_{Q_{1/2}}(t)$ for all $t$ for which $F_{Q_{1/2}}(t) \in (0, 1)$.
\end{definition}

\begin{statement}\label{stmt:strict-three-way}
Let $P$ be \emph{strictly wider} than $Q$, both distributions being continuous, and let $X, X' \sim P$, $Y, Y' \sim Q$ be independent. Then for all $n \geq 1$
$$\E|X-X'|^n \;>\; \E|X-Y|^n \;>\; \E|Y-Y'|^n.$$
\end{statement}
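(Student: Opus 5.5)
We follow the proof of Proposition~\ref{stmt:wider-convex}, now with $f(t)=t^n$, $n\ge1$, and upgrade each of its two inequalities to a strict one. Put
$$g(x,y)=\tfrac12\bigl(|x-y|^n+(x+y)^n\bigr).$$
As in that proof, let $F=F^{-1}_{X_{1/2}}\circ F_{Y_{1/2}}$ be Smirnov's transform. Since the distributions are continuous, $X_{1/2}\overset{d}{=}F(Y_{1/2})$, and $F$ is non-decreasing with $F(y)\ge y$. The chain is
\begin{align*}
\E|X-X'|^n&=\E g(F(Y_{1/2}),F(Y'_{1/2}))\ge \E g(F(Y_{1/2}),Y'_{1/2})\\
&=\E|X-Y|^n\ge \E g(Y_{1/2},Y'_{1/2})=\E|Y-Y'|^n .
\end{align*}
It is enough to show that each inequality holds strictly on a set of positive probability. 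Since all the expectations are finite (we may assume the $n$-th moments are finite, otherwise the statement is vacuous or read in the extended sense), this gives strict inequalities between the expectations.

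\textbf{Step 1: $F(y)>y$ almost surely.} This is where strict width enters. For $y$ in the support of $Y_{1/2}$ with $F_{Y_{1/2}}(y)\in(0,1)$, strict width gives $F_{X_{1/2}}(y)<F_{Y_{1/2}}(y)$. Take $q=F_{Y_{1/2}}(y)$. By continuity of $F_{X_{1/2}}$, the quantile $F^{-1}_{X_{1/2}}(q)$ is the least point at which $F_{X_{1/2}}$ reaches $q$, and this point must exceed $y$. Hence $F(y)>y$. Because $Y_{1/2}$ has a continuous distribution, $F_{Y_{1/2}}(Y_{1/2})\in(0,1)$ almost surely. So $F(Y_{1/2})>Y_{1/2}$ a.s., and likewise for the primed copy.

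\textbf{Step 2: strict monotonicity of $g$.} We need $g(x_2,y)>g(x_1,y)$ for $x_2>x_1\ge0$ and almost every $y$. When $n>1$, $t^n$ is strictly increasing and strictly convex on $\R_+$, so both cases in the monotonicity argument become strict, provided $y>0$ in the case $x\le y$. When $n=1$ we have $g(x,y)=\max(x,y)$, which is strictly increasing in $x$ only on $x\ge y$.

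\textbf{Step 3: conclusion.} For $n>1$, continuity gives $Y'_{1/2}>0$ a.s., so Steps 1 and 2 yield strictness almost surely in both inequalities. For $n=1$ this is the main obstacle: we need positive probability of $F(Y_{1/2})>Y'_{1/2}$ in the first inequality, and of $F(Y_{1/2})>Y'_{1/2}$ together with $Y_{1/2}<F(Y_{1/2})$ in the second, whereas $\max$ gives strictness only where the larger argument moves. Take $y_0$ with $F_{Y_{1/2}}(y_0)\in(0,1)$ and the event that $Y'_{1/2}$ lies slightly below $y_0$ while $Y_{1/2}$ lies slightly above $y_0$. Both pieces have positive probability, since $y_0$ can be chosen in the interior of the support range. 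On this event $F(Y_{1/2})>Y_{1/2}>Y'_{1/2}$, so
$$\max\bigl(F(Y_{1/2}),Y'_{1/2}\bigr)>\max\bigl(Y_{1/2},Y'_{1/2}\bigr),$$
which gives strictness in the second inequality. For the first, use the same event with $F(Y'_{1/2})$ in place of $Y'_{1/2}$: there $\max\bigl(F(Y_{1/2}),F(Y'_{1/2})\bigr)$ versus $\max\bigl(F(Y_{1/2}),Y'_{1/2}\bigr)$ may coincide. Instead we choose the event that $Y'_{1/2}$ is slightly above $y_0$ and $Y_{1/2}$ slightly below $y_0$, so that $F(Y'_{1/2})>Y'_{1/2}$. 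Both maxima are then attained in the second coordinate, provided $F(Y_{1/2})$ is not too large. If it is too large, we argue via the symmetric alternative event.
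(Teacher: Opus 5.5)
Your route is the paper's: the same Smirnov coupling $X_{1/2}\overset{d}{=}F(B)$ with $B=Y_{1/2}$, the same kernel $g$, and the same split into $n>1$ (strict monotonicity of $g$ plus $F(B)>B$ a.s.) and $n=1$ (where $g=\max$). Steps 1 and 2 and the case $n>1$ are fine. Your split $n>1$ versus $n=1$ is slightly cleaner than the paper's $n\ge 2$ versus $n=1$. The second inequality for $n=1$ is also fine, although you need neither ``slightly'' nor a careful choice of $y_0$. On $\{B'<B\}$, which has probability $1/2$, already $\max(F(B),B')=F(B)>B=\max(B,B')$.

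\textbf{The gap is the first inequality for $n=1$.} You correctly see that the event $\{B'<y_0<B\}$ fails there, since both maxima equal $F(B)$. You then pass to $\{B<B'\}$, worry that $F(B)$ might be ``too large'', and defer to a ``symmetric alternative event''. The natural candidate for that is exactly the event you just showed to be useless, so this step is not proved.

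The missing observation is that $F$ is essentially injective because $P$ is continuous. $F^{-1}_{X_{1/2}}$ is strictly increasing on $(0,1)$. Also $F_{Y_{1/2}}(B)\neq F_{Y_{1/2}}(B')$ a.s., since these are independent uniforms. Hence on $\{B<B'\}$ you get $F(B)<F(B')$ a.s. Combined with $F(B')>B'$ from Step 1, this gives
$$\max\bigl(F(B),F(B')\bigr)=F(B')>\max\bigl(F(B),B'\bigr),$$
whether or not $F(B)$ exceeds $B'$. Equivalently, use the event $\{F(B')>F(B)\}$. It has probability $1/2$ because $(F(B),F(B'))\overset{d}{=}(X_{1/2},X'_{1/2})$ is an i.i.d. pair with a continuous law.

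For comparison, the paper uses the event $\{B'>F(B)\}$, on which $\max(F(B),F(B'))\ge F(B')>B'=\max(F(B),B')$. That works whenever $\P(Y'_{1/2}>X_{1/2})>0$, as in the paper's application, where both limiting measures charge all of $[-1,1]$. However, this probability can vanish for a strictly wider pair, for example $Q_{1/2}$ uniform on $[0,1]$ and $P_{1/2}$ uniform on $[2,3]$. So $\{F(B')>F(B)\}$ is the more robust choice.
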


\begin{proof}
We keep the notation of the proof of Proposition~\ref{stmt:wider-convex} with $f(t) = t^n$: the auxiliary function $g(x, y) = \tfrac{1}{2}\bigl(|x-y|^n + (x+y)^n\bigr)$ and the non-decreasing $F$ with $X_{1/2} \overset{d}{=} F(Y_{1/2})$ and $F(y) \geq y$. Putting $B = Y_{1/2}$, $B' = Y'_{1/2}$, we have
$$\E|X-X'|^n = \E g(F(B), F(B')), \qquad \E|X-Y|^n = \E g(F(B), B'), \qquad \E|Y-Y'|^n = \E g(B, B').$$
The non-strict inequalities between these quantities have already been proved in Proposition~\ref{stmt:wider-convex}; let us establish strictness. Strict width strengthens $F(y) \geq y$ to $F(y) > y$ everywhere where $F_{Q_{1/2}}(y) \in (0, 1)$; by the continuity of $Q_{1/2}$ this yields $F(B) > B$ and $F(B') > B'$ almost surely.

\emph{The case $n \geq 2$.} The function $g$ is strictly increasing in each argument on $(0, \infty)$ (for $y < x$ the derivative in $y$ equals $\tfrac{n}{2}\bigl((x+y)^{n-1} - (x-y)^{n-1}\bigr) > 0$, and for $y > x$ it is positive all the more). Since $F(B') > B'$ and $F(B) > B$ almost surely, both inequalities are strict.

\emph{The case $n = 1$.} Here $g(x, y) = \max(x, y)$. The first inequality is strict on the event $\{B' > F(B)\}$: on it $\max\bigl(F(B), F(B')\bigr) = F(B') > B' = \max\bigl(F(B), B'\bigr)$. The second is strict on the event $\{B' < F(B)\}$: on it $\max\bigl(F(B), B'\bigr) = F(B) > \max(B, B')$. Both events have positive probability.
\end{proof}

\section{The centrally symmetric case}
\label{sec:centrally-symmetric}

In this section we shall study \textit{centrally symmetric planar} bodies. We shall try to answer most of the questions related to this case. First of all, as was announced earlier, let us give a proposition thanks to which we shall be able to reduce the two-dimensional case to the one-dimensional one.

\begin{statement}[Cauchy--Crofton formula]\label{stmt:cauchy-crofton}
Let $X, Y$ be some pair of random points having some (possibly dependent) joint distribution. Then
$$\E|XY|^n = c_n \int_0^\pi \E|P_u(XY)|^n\, du, \qquad c_n = \left[\int_0^\pi |\cos x|^n\, dx\right]^{-1}.$$
\end{statement}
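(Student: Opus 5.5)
The plan is to reduce the statement to a deterministic identity for a single fixed vector and then take expectations. Put $V = Y - X$, a random vector in $\R^2$ with whatever joint law the pair $(X,Y)$ has. For every direction $u$, identified with the unit vector $(\cos u, \sin u)$, $u \in [0,\pi)$, we have $P_u(XY) = \langle V, u\rangle$. The heart of the argument is therefore to show that for every fixed $v \in \R^2$
$$|v|^n = c_n \int_0^\pi |\langle v, u\rangle|^n\, du .$$
Once this holds, we substitute $v = V(\omega)$ and take expectations on both sides. The integrand is non-negative, so Tonelli's theorem lets us exchange $\E$ and $\int_0^\pi$. Tonelli requires no independence or integrability assumptions, which is exactly why the formula is valid for an arbitrary, possibly dependent, joint distribution, and why it holds even when both sides are infinite.

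To prove the deterministic identity, we write $v = |v|(\cos\varphi, \sin\varphi)$. Then $\langle v,u\rangle = |v|\cos(u-\varphi)$, and so
$$\int_0^\pi |\langle v,u\rangle|^n\,du = |v|^n \int_0^\pi |\cos(u-\varphi)|^n\,du .$$
The function $x \mapsto |\cos x|^n$ is $\pi$-periodic, so its integral over any interval of length $\pi$ equals $\int_0^\pi |\cos x|^n dx = c_n^{-1}$, independently of $\varphi$. For $v=0$ both sides vanish trivially. Multiplying by $c_n$ gives the identity.

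The main obstacle is measurability rather than any real analytic difficulty. The function $(\omega,u) \mapsto |\langle V(\omega),u\rangle|^n$ must be jointly measurable for Tonelli to apply. This holds because it is continuous in $u$ and measurable in $\omega$, which makes it a Carathéodory function. One more point should be noted: the statement is meant for $n>0$, since $c_n$ is finite for all $n>-1$. For the later applications only $n \geq 1$ and the case of projections are needed, so the identity as proved is sufficient.
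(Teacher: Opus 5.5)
Your proposal is correct and follows the same route as the paper. Both exchange $\E$ and $\int_0^\pi$ (the paper cites Fubini, you more precisely cite Tonelli for the non-negative integrand) and then apply the deterministic identity $\int_0^\pi |\langle v,u\rangle|^n\,du = c_n^{-1}|v|^n$. Your treatment of the phase, writing $\langle v,u\rangle = |v|\cos(u-\varphi)$ and using the $\pi$-periodicity of $|\cos x|^n$, makes explicit a step the paper skips when it writes the integrand as $|\cos u|^n\cdot|XY|^n$.
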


\begin{proof}
The statement follows trivially from Fubini's theorem:
$$c_n \int_0^\pi \E|P_u(XY)|^n\, du = c_n \E \int_0^\pi |P_u(XY)|^n\, du = c_n \E \int_0^\pi |\cos u|^n \cdot |XY|^n\, du = \E|XY|^n.$$
\end{proof}

Hence, in order to study the moments it suffices to study only the moments of the distances between the projections, which is already much easier. The following lemma will help us with this.

\begin{lemma}
Let $K$ be a non-empty convex body, centrally symmetric with respect to a point $O$, and let some direction $\vec{u}$ be chosen. The lines $l_1$, $l_2$ are centrally symmetric to each other with respect to the point $O$ and perpendicular to $\vec{u}$. Call $H$ the part of the plane contained between the lines $l_1$ and $l_2$. Then
$$\cfrac{|\sigma K \cap H|}{|\sigma K|} \leq \cfrac{|K \cap H|}{|K|}.$$
\end{lemma}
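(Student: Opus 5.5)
Throughout I place $O$ at the origin, write $t$ for the coordinate along $\vec u$, and let $l_1,l_2$ be the lines $\{t=\pm h\}$, so that $H=\{|t|\le h\}$. Let $K$ project onto $[-a,a]$. The plan is to reduce the lemma to a one‑dimensional comparison between two densities on $[-a,a]$, in the spirit of Section~\ref{sec:linear-symmetric}, and then to check a monotone‑ratio inequality. By the continuity results of Section~\ref{sec:continuity} (used in their standard form for $|K|$ and $|\sigma K|$), I will assume $K$ is a polygon or has smooth boundary, so that all derivatives below exist almost everywhere.

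\textbf{The two densities.} Write the boundary of $K$ over $(-a,a)$ as the graphs of a convex lower function $f_1$ and a concave upper function $f_2$. Central symmetry gives $f_2(t)=-f_1(-t)$, hence $f_2'(t)=f_1'(-t)$. The chord length $w(t)=f_2(t)-f_1(t)$ is even and concave, so it is non‑increasing on $[0,a]$. With $\varphi(s)=\sqrt{1+s^2}$, the boundary length density is
\[
p(t)=\varphi\bigl(f_1'(t)\bigr)+\varphi\bigl(f_1'(-t)\bigr),
\]
plus atoms of mass $w(\pm a)$ at $t=\pm a$ if $K$ has edges perpendicular to $\vec u$. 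Set $W(h)=\int_{-h}^{h}w$ and $P(h)=\int_{-h}^{h}p$, the latter excluding the atoms when $h<a$. The lemma then reads
\[
\frac{P(h)}{W(h)}\le\frac{|\sigma K|}{|K|}.
\]

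\textbf{Reduction to a monotone ratio.} It suffices to show that $r(h)=P(h)/W(h)$ is non‑decreasing on $(0,a)$. Then $r(h)\le\lim_{h\to a^-}r(h)$, and the atoms at $\pm a$ only increase the right‑hand side $|\sigma K|/|K|$. Differentiating, monotonicity of $r$ is equivalent to
\[
p(h)\,W(h)\ \ge\ P(h)\,w(h)\qquad (0<h<a),
\]
that is, the local ratio $p(h)/w(h)$ at the edge of the strip dominates the average ratio over the strip. For the area side I use concavity: since $w$ is non‑increasing on $[0,h]$, we get $W(h)\ge 2h\,w(h)$. It therefore remains to prove
\[
P(h)\le 2h\,p(h),
\]
a statement purely about the convex arc $f_1$ on $[-h,h]$ together with its symmetric copy. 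By symmetry, $P(h)=2L$, where $L=\int_{-h}^{h}\varphi(f_1')\,dt$ is the length of the lower arc, and $2h\,p(h)=2h\bigl(\varphi(f_1'(h))+\varphi(f_1'(-h))\bigr)$. Because $f_1'$ is monotone, the slopes on $[-h,h]$ lie between $f_1'(-h)$ and $f_1'(h)$, and $\varphi$ is convex. So the goal is
\[
\int_{-h}^{h}\varphi(f_1')\,dt\ \le\ h\bigl(\varphi(f_1'(-h))+\varphi(f_1'(h))\bigr).
\]

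\textbf{Main obstacle.} The last inequality is not true for an arbitrary convex arc. It fails if $f_1'$ stays near its extreme value on most of the interval, for instance when the arc is almost straight with a steep slope and then bends only near one end. The crude bound $W\ge 2hw$ discards exactly the slack needed to compensate for this. So the real work is to couple the two estimates instead of bounding them separately. First I would handle $w(h)$ more carefully: $w$ is concave with $w'(t)=f_2'(t)-f_1'(t)=f_1'(-t)-f_1'(t)$, so a lower bound for $W(h)$ in terms of $w(h)$ and the slopes $f_1'(\pm h)$ is available. When the lower arc has steep slopes on a long stretch, the chord $w$ must grow quickly towards the centre, which makes $W(h)$ much larger than $2h\,w(h)$. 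I would try to prove the combined inequality $p(h)W(h)\ge P(h)w(h)$ directly. The route is to parametrise the lower arc on $[-h,h]$ and use an extremality argument: for fixed endpoint data $f_1(\pm h)$ and $f_1'(\pm h)$, the worst case should be a two‑segment arc, namely the tangent lines meeting at a point. In that case both sides are explicit and the inequality becomes an elementary two‑parameter check. If this coupling fails, the fallback is a polygonal induction: move one vertex of $K$ together with its symmetric image, and track how each side of the inequality changes.
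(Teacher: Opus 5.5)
\textbf{There is a genuine gap: the monotonicity you reduce to is false.} Your step ``it suffices to show that $r(h)=P(h)/W(h)$ is non-decreasing'' replaces the lemma by a false statement, so the plan cannot be completed, with or without the coupling you propose.

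Take the parallelogram $K$ with vertices $(-1-\varepsilon,-M)$, $(-1+\varepsilon,-M)$, $(1+\varepsilon,M)$, $(1-\varepsilon,M)$, where $0<\varepsilon<1$ and $M>0$, and take $u=e_1$. For $|t|<1-\varepsilon$ both boundary crossings lie on the long sides, so $w=2M\varepsilon$ and $p=2\sqrt{1+M^2}$. For $1-\varepsilon<|t|<1+\varepsilon$ one crossing lies on a horizontal side, so $p=1+\sqrt{1+M^2}$. Hence, as $h\downarrow 1-\varepsilon$,
\[
p(h)W(h)-P(h)w(h)\to 4M\varepsilon(1-\varepsilon)\bigl(1-\sqrt{1+M^2}\bigr)<0,
\]
so $r$ strictly decreases on an interval. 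The lemma itself still holds there:
\[
P/W=\sqrt{1+M^2}/(M\varepsilon)<(\varepsilon+\sqrt{1+M^2})/(M\varepsilon)=|\sigma K|/|K|.
\]
Consequences for your plan:
\begin{itemize}
\item The combined inequality $p(h)W(h)\ge P(h)w(h)$ is false, not only the split into $W\ge 2hw$ and $P\le 2hp$.
\item In this example $f_1$ on $[-h,h]$ is exactly the two-segment arc you expected to be extremal. Your extremality check would therefore expose the failure rather than close the argument.
\item The polygonal-induction fallback is too unspecified to evaluate.
\end{itemize}

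\textbf{The missing idea is to average over the complement of the strip, not the strip itself.}
\begin{itemize}
\item Let $S_1,P_1$ be the area and boundary length of one cap $K\cap\{t\ge h\}$. Then $|K|=W+2S_1$ and $|\sigma K|=P+2P_1$, so the lemma is equivalent to $S_1/P_1\le|K|/|\sigma K|$. The right-hand side is the value of the cap ratio at $h=0$, where the cap is half of $K$.
\item The cap ratio is non-increasing in $h$. Its derivative condition is $S_1/P_1\le w(h)/p(h)$, where the average is now over the cap.
\item To prove this, let $\alpha_1,\alpha_2$ be the angles between the boundary and the chord at its two ends, so $p=1/\sin\alpha_1+1/\sin\alpha_2$.
\item Since $w$ is non-increasing in $|t|$, the supporting lines at the chord ends converge (or are parallel) on the side away from $O$.
\item Hence the cap together with its reflection in the midpoint of the chord is a convex set of perimeter $2P_1$. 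It lies in the strip bounded by the supporting line at either end and its parallel through the other end, which has width $w\sin\alpha_i$.
\item Every section parallel to this strip has length at most $P_1$, so $2S_1\le P_1w\min_i\sin\alpha_i\le 2P_1w/p$.
\end{itemize}
This is the paper's argument.
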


\begin{proof}
    Note that it suffices to consider centrally symmetric polygons, since in this case it is not difficult to pass to the limit in the inequality in the Hausdorff metric (see Section~\ref{sec:continuity}).

    Call $H_-$ the half-plane adjacent to $l_1$ and not containing $O$, and $H_+$ the half-plane centrally symmetric to it with respect to $O$. Let $P_1 = |\sigma K \cap H_+|$, $P = |\sigma K \cap H|$, $S_1 = |K \cap H_+|$, $S = |K \cap H|$. Then $|K| = S + 2S_1$, $|\sigma K| = P + 2P_1$.

    \begin{figure}[h] % figure environment (one may use [h], [t], [b])
        \centering
        \includegraphics[width=0.6\textwidth]{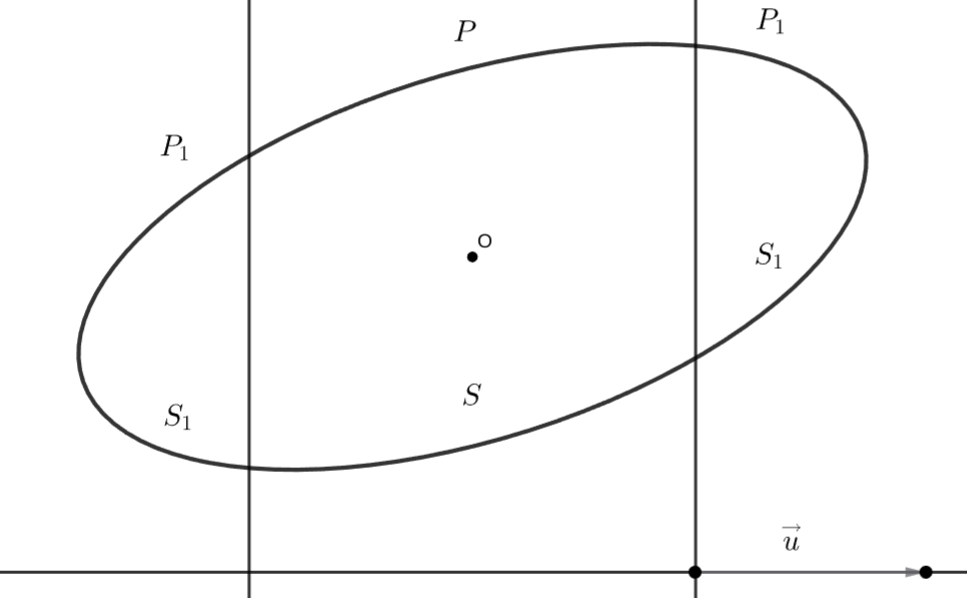} % insert the picture
        \label{fig:pcsb1} % label for references in the text
    \end{figure}

     It is obvious that the stated assertion is equivalent to the following one:
    $$\cfrac{S_1}{P_1} \leq \cfrac{|K|}{|\sigma K|}.$$

    We shall move the line $l_1$ from the position where it touches our body to the position where it passes through the centre of symmetry. We shall specify the line by the coordinate of its intersection with the line codirectional with $\vec{u}$; we introduce the coordinates so that $0$ corresponds to the moment of tangency and $1$ to the moment of passage through the centre of symmetry. We denote the line at the moment $t$ by $l_1(t)$. We define $S_1(t)$ and $P_1(t)$ analogously.
    Hence, it suffices for us to show that

    $$\forall \, 0 \leq t \leq 1 \quad \cfrac{S_1(t)}{P_1(t)} \leq \cfrac{S_1(1)}{P_1(1)} = \cfrac{|K|}{|\sigma K|}.$$

    That is, it suffices to verify the non-strict monotonicity of $R(t) = \cfrac{S_1(t)}{P_1(t)}$. The function $R(t)$ is piecewise differentiable: $S_1(t)$ is differentiable everywhere, and $P_1(t)$ everywhere except the moments when $l_1(t)$ passes through a vertex of the polygon. In other words, it suffices to verify the non-negativity of the derivative at all points where it exists:
    $$\cfrac{d}{dt}R(t) = \cfrac{S_1'(t)\, P_1(t) - P_1'(t)\, S_1(t)}{P_1(t)^2} \geq 0 \;\iff\; \cfrac{S_1(t)}{P_1(t)} \leq \cfrac{S_1'(t)}{P_1'(t)}.$$

    Note that $S_1'(t) = a(t)$ is the length of the segment of intersection of $K$ and $l_1(t)$. At the same time $P_1'(t) = \cfrac{1}{\sin(\alpha_1(t))} + \cfrac{1}{\sin(\alpha_2(t))}$, where $\alpha_1(t), \alpha_2(t)$ are the angles between the intersected side and $l_1(t)$.

    It only remains to verify the inequality
    $$\forall \, 0 \leq t \leq 1 \quad \cfrac{S_1(t)}{P_1(t)} \leq \cfrac{a(t)}{\cfrac{1}{\sin(\alpha_1(t))} + \cfrac{1}{\sin(\alpha_2(t))}}.$$

    It can be strengthened:
    $$\forall \, 0 \leq t \leq 1 \quad \cfrac{S_1(t)}{P_1(t)} \leq \cfrac{a(t) \cdot \min\bigl(\sin\alpha_1(t),\, \sin\alpha_2(t)\bigr)}{2}.$$

    To this end consider the strip formed by two parallel lines, and a segment in it. Suppose that in one of the resulting half-strips there fits a convex figure containing the whole segment of length $a$ on its boundary. Let its area be $\hat{S}$, its perimeter (without the length of the segment) $\hat{P}$, and the angle between the segment and the bases of the strips $\alpha$.

    Then

    $$\hat{S} \leq \cfrac{\hat{P} \cdot a \cdot \sin \alpha}{2}.$$

    In order to see this, let us construct a second arc, centrally symmetric to ours with respect to the midpoint of the segment. The resulting figure will have twice the area of the original one.
    The notation is as in the picture.

    \begin{figure}[h]
        \centering
        \includegraphics[width=0.8\textwidth]{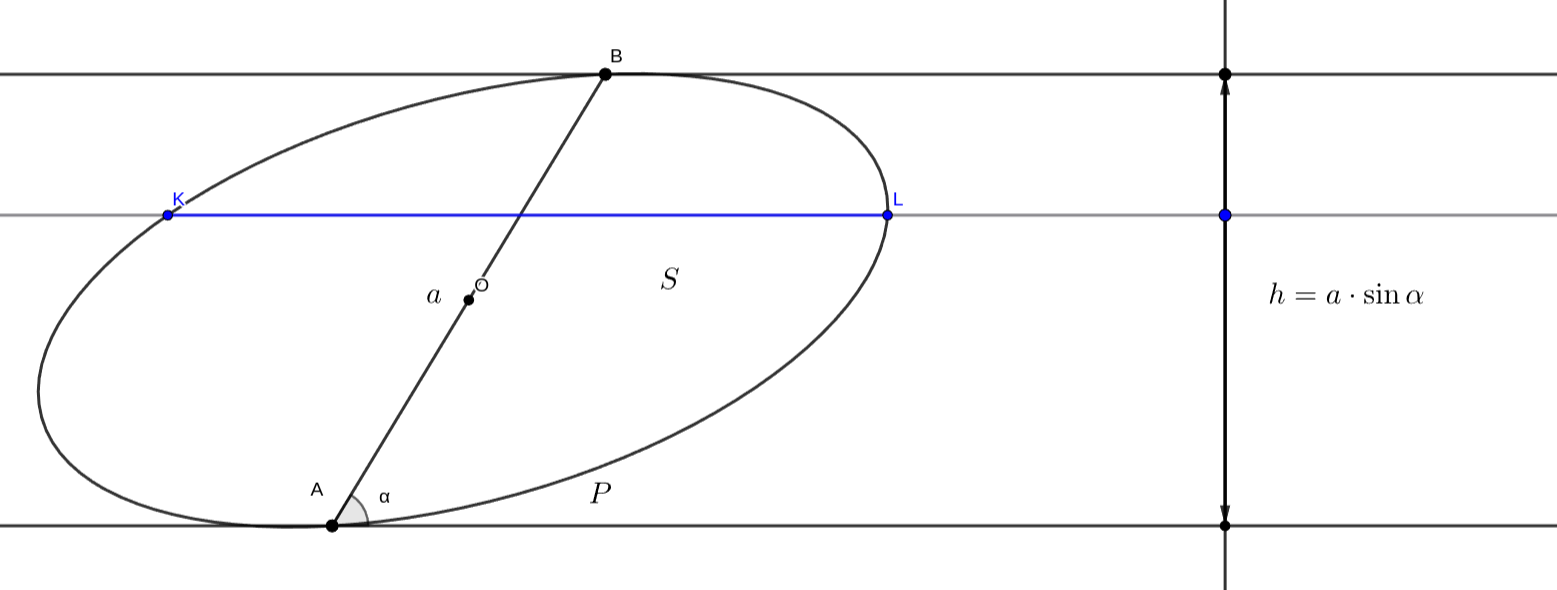}
        \label{fig:pcsb4}
    \end{figure}

    Let us represent its area as an integral:
    $$2S = \int_0^h |K \cap l(x)|\, dx \leq \int_0^h P\, dx = P h = P \cdot a \sin\alpha.$$

    Whence the assertion follows, since the length of any section cannot exceed half the length of the perimeter of the figure.

    From the geometric lemma we can obtain a multitude of useful probabilistic corollaries.
\end{proof}

\begin{corollary}\label{cor:planar-symmetric}
Let $K$ be a centrally symmetric convex body with centre at the point $O$, let $u$ be some direction, let $X$ be a random interior point of $K$, and let $Y$ be a random boundary point. Then
\begin{enumerate}
    \item $|P_u(OY)| \succ |P_u(OX)|$;
    \item $P_u(Y)$ is \textit{wider} than $P_u(X)$;
    \item $\E|OY|^n > \E|OX|^n$ for all $n > 0$;
    \item $\E|YY'|^n \geq \E|XX'|^n$ for all $n \geq 1$.
\end{enumerate}
\end{corollary}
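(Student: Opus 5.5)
The plan is to derive all four items from the geometric lemma just proved, translated into distribution functions of one-dimensional projections. Then Proposition~\ref{stmt:wider-convex} (and its moment corollary) together with the Cauchy--Crofton formula (Proposition~\ref{stmt:cauchy-crofton}) finish the argument.

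\emph{Items 1 and 2.} Place $O$ at the origin and fix a unit direction $u$. For $t \geq 0$, let $H_t$ be the strip $\{z : |\langle z, u\rangle| \leq t\}$. It is bounded by the two lines perpendicular to $u$ at distance $t$ from $O$, which are symmetric to each other with respect to $O$. Then
\[
\P\bigl(|P_u(OX)| \leq t\bigr) = \frac{|K\cap H_t|}{|K|}, \qquad \P\bigl(|P_u(OY)| \leq t\bigr) = \frac{|\sigma K\cap H_t|}{|\sigma K|}.
\]
The lemma gives exactly $F_{|P_u(OY)|}(t) \leq F_{|P_u(OX)|}(t)$ for every $t$ at which the strip cuts $K$. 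For larger $t$ both sides equal $1$, and the degenerate case $t=0$ is trivial. This is the stochastic dominance $|P_u(OY)| \succ |P_u(OX)|$, i.e.\ item~1. Both projected distributions are symmetric about $P_u(O)$, because $K$ and $\sigma K$ are symmetric and the uniform measures are invariant under the central symmetry. Their halves are precisely the laws of $|P_u(OY)|$ and $|P_u(OX)|$, so item~1 is literally the statement that $P_u(Y)$ is wider than $P_u(X)$ in the sense of Definition~\ref{def:wider}. This gives item~2.

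\emph{Item 4.} Apply Proposition~\ref{stmt:wider-convex} with $f(t)=t^n$, $n\geq 1$, to the projected distributions. This gives $\E|P_u(YY')|^n \geq \E|P_u(XX')|^n$ for every $u$. Integrating over $u\in[0,\pi]$ and using the Cauchy--Crofton formula yields $\E|YY'|^n \geq \E|XX'|^n$.

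\emph{Item 3, the main obstacle: strictness.} Item~1 integrated via the Cauchy--Crofton formula (applied to the pair $O,Y$ and $O,X$) only gives $\E|OY|^n \geq \E|OX|^n$. For the strict inequality it suffices to exhibit one direction, hence by continuity in $u$ a set of positive measure of directions, where $\E|P_u(OY)|^n > \E|P_u(OX)|^n$. Since $\E Z^n = \int_0^\infty n t^{n-1}\P(Z>t)\,dt$, it is enough to have $F_{|P_u(OY)|}(t) < F_{|P_u(OX)|}(t)$ on a set of $t$ of positive measure. I would argue this directly: for small $t>0$, the mass of the strip is $|K\cap H_t| \approx 2t\,w$, where $w$ is the length of the central chord perpendicular to $u$. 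The boundary mass is $|\sigma K \cap H_t| \approx 2t\,(1/\sin\alpha_1 + 1/\sin\alpha_2)$, where $\alpha_1,\alpha_2$ are the angles between the boundary and the strip lines at the chord's endpoints. So strictness near $t=0$ reduces to $w/|K| > 2/(\sigma\text{-length}\cdot\sin\alpha)$ in the appropriate averaged form. Alternatively, one can compare the two sides directly as $t\to 0$: the left side is of order $t/|\sigma K|$ while the right side is $t\,w/|K|$, and $w|\sigma K| > |K|\cdot(\ldots)$ can be checked. If this local computation proves messy, the fallback is to show that equality in the lemma for all $t$ would force $R(t)$ to be constant. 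That would require the strengthened inequality $\hat S \le \hat P a\sin\alpha/2$ to be tight, which needs every chord of the doubled figure to have length equal to half the perimeter; this is impossible for a body with non-empty interior. Either route gives strict dominance on an interval of $t$, hence strict moment inequalities for every $n>0$.
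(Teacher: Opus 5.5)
Your treatment of items~1, 2 and~4 is the paper's own argument. Item~1 is the lemma read as an inequality of distribution functions, item~2 is item~1 read through Definition~\ref{def:wider}, and item~4 is Proposition~\ref{stmt:wider-convex} with $f(t)=t^n$ followed by Cauchy--Crofton. For item~3 the paper only cites the properties of the ``wider'' relation, and those give $\E|OY|^n \geq \E|OX|^n$ and nothing more. You are right that strictness needs an extra step, and the paper does not supply one.

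Your own argument for strictness is not complete, however. In the first route the decisive comparison is left as ``can be checked''. What actually has to be shown, with one-sided angles at the ends of the central chord, is $|K|\bigl(\frac{1}{\sin\alpha_+}+\frac{1}{\sin\alpha_-}\bigr) < w\,|\sigma K|$, and this needs a proof. It is true: the one-sided tangent at one end of the chord and its reflection through $O$ are parallel supporting lines at distance $w\sin\alpha_\pm$. Every chord parallel to them is strictly shorter than $|\sigma K|/2$, so $|K| < \tfrac12 w\sin\alpha_\pm|\sigma K|$. The first-order asymptotics near $t=0$ would also need justifying. Your fallback relies on the derivative computation for $R(t)$, which the paper carries out only for polygons. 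The lemma for a general body comes from Hausdorff approximation, and strict inequalities do not survive that limit. So the fallback gives strictness only for polygons.

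A cleaner fix works at the outer end of the strip range and handles every direction and every $K$ at once. Let $h$ be the half-width of $K$ in direction $u$ and $a(s)$ the chord length at level $s$. Since $a$ is even and concave, it is non-increasing on $[0,h]$. Hence the cap $K\cap\{\langle z,u\rangle>s\}$ has area at most $a(s)(h-s)$. Its boundary arc joins the endpoints of a chord of length $a(s)$, so the arc has length at least $a(s)$. Together these give area $\le (h-s)\cdot$(arc length). For $s\in(0,h)$ close enough to $h$ that $h-s<|K|/|\sigma K|$, and since the arc length is positive for $s<h$, this yields $\P(|P_u(OX)|>s) < \P(|P_u(OY)|>s)$ on a whole interval. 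Through $\E Z^n=\int_0^\infty n t^{n-1}\P(Z>t)\,dt$ this gives $\E|P_u(OY)|^n > \E|P_u(OX)|^n$ for every $u$. Integrating over $u$ with Cauchy--Crofton then gives the strict inequality in item~3.
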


\begin{proof}
    Statement~1 is by definition equivalent to the lemma described above, the second is derived from the first by the definition, and statements~3 and~4 are obtained from the second by applying the properties of the ``wider'' relation from Section~\ref{sec:linear-symmetric}.
\end{proof}

Thus, we have shown that Conjecture~\ref{hyp:zt} holds for planar centrally symmetric bodies.

\section{Counterexamples}

In the previous section we established that for planar centrally symmetric convex bodies Conjecture~\ref{hyp:zt} holds: the moments of the distance between two random boundary points majorise the moments of the distance between two random interior points. It is natural to ask whether this result carries over to dimensions $d \geq 3$.

The purpose of the present section is to show that it does not: in every dimension $d \geq 3$ Conjecture~\ref{hyp:zt} is refuted in all powers at once. Namely, for any $n \geq 1$ there is a centrally symmetric convex body $K \subset \R^d$ for which $\E|XX'|^n > \E|YY'|^n$.

The exposition is arranged in three steps. First a geometric condition on the body is introduced --- property~\propP{} --- from which, by a general ``stretching'' scheme, a counterexample in an arbitrary dimension is obtained. Then an explicit example of a body with property~\propP{} in $\R^3$ is given. Finally, the operation of raising the dimension transfers the counterexample to all $d \geq 3$.

\subsection{Construction of a counterexample from a body with property \propP{} in an arbitrary dimension}
\label{sec:counterexample-from-P}

Let $K_0 \subset \R^d$ be a centrally symmetric convex body with centre at the origin, and let $u \in S^{d-1}$ be a unit vector. The sections of the body by hyperplanes orthogonal to $u$,
\[
    K_0(t) \;=\; K_0 \cap \{x \in \R^d : \langle x, u \rangle = t\},
\]
are $(d-1)$-dimensional convex bodies in the hyperplane $\{\langle x, u \rangle = t\}$. Denote by $A_0(t) = \mathcal{H}^{d-1}(K_0(t))$ their $(d-1)$-dimensional volume, and by $P_0(t) = \mathcal{H}^{d-2}(\partial K_0(t))$ the $(d-2)$-dimensional surface measure of the relative boundary.

\begin{definition}[Property \propP]\label{def:propP}
A centrally symmetric convex body $K_0 \subset \R^d$ with centre at the origin has \emph{property \propP} if there is a direction $u \in S^{d-1}$ for which the function
\[
    \rho_{u}(t) \;=\; \frac{A_0(t)}{P_0(t)}
\]
is strictly increasing in $|t|$ on the support of $K_0$ relative to $u$.
\end{definition}

The quantity $\rho_u(t)$ is, up to a factor, the reciprocal of the \emph{Cheeger constant} of the section $K_0(t)$. For a bounded domain $\Omega$ the Cheeger constant
\[
    h(\Omega) \;=\; \inf_{F \subset \Omega} \frac{P(F)}{|F|}
\]
was introduced by Cheeger in 1970 \cite{cheeger70} in connection with the estimation of the first eigenvalue of the Laplace operator; the minimisers of this ratio are called \emph{Cheeger sets}. The ratio $A_0(t)/P_0(t)$ is exactly the ``volume/surface'' functional whose minimisation defines the Cheeger constant. The properties of its monotonicity in families of convex bodies have been studied in a number of works \cite{kl06, leonardi, altercaselles, cheegerpar}.

Before passing to the construction scheme itself, let us note that in low dimensions property \propP{} is unattainable.

\begin{note_nnum}\label{thm:planar-obstruction}
No centrally symmetric convex body in $\R^2$ has property \propP. Equivalently: for any such body and any direction $u \in S^1$ the function $\rho_u(t)$ is non-increasing in $|t|$.
\end{note_nnum}

The obstruction is elementary. For $d = 2$ the section $K_0(t)$ is a segment of length $L(t)$, where $L$ is an even and concave function, so $A_0(t) = L(t)$ is non-increasing in $|t|$. At the same time $P_0(t) \equiv 2$ is constant (the boundary of a segment consists of two points). Consequently, $\rho_u(t) = L(t)/2$ decreases in $|t|$ --- exactly the opposite of what is required by property \propP. In the one-dimensional case the situation is even simpler: the sections degenerate to a point, and both quantities $A_0$, $P_0$ cease to have a meaningful sense, so that property \propP{} is simply not defined.

Although Theorem~\ref{thm:main} stated below formally works in low dimensions as well, in $\R^1$ and $\R^2$ it is not applicable because of the absence of bodies with property~\propP, so in what follows we consider only $d \geq 3$.

Let us pass to the scheme itself. Fix $K_0 \subset \R^d$, $d \geq 3$, a centrally symmetric convex body with property \propP{} in the direction $u \in S^{d-1}$, with the functions $A_0(t)$, $P_0(t)$ from Definition~\ref{def:propP}. For $h > 0$ denote by $K_0(h)$ the image of $K_0$ under the anisotropic stretching multiplying the $u$-coordinate by $h$ and leaving the coordinates on the orthogonal complement $u^\perp$ unchanged. Let $X, X'$ be independent uniformly distributed points inside $K_0(h)$, and let $Y, Y'$ be independent uniformly distributed points on $\partial K_0(h)$.

With the body $K_0$ we associate two probability measures on $[-1, 1]$ with densities proportional to $A_0(|t|)$ and $P_0(|t|)$:
\[
    \mu_X(\mathrm{d}t) \;\propto\; A_0(|t|)\,\mathrm{d}t, \qquad
    \mu_Y(\mathrm{d}t) \;\propto\; P_0(|t|)\,\mathrm{d}t.
\]
They should be understood as the limiting, as $h \to \infty$, distributions of the projection onto the direction $u$, normalised by $h$, of an interior and of a boundary point of the body $K_0(h)$ respectively. The idea of the construction of the counterexample is that property \propP{} entails the width inequality: $\mu_X$ turns out to be \emph{wider} than $\mu_Y$ in the sense of Definition~\ref{def:wider}.

Let $\xi \sim \mu_X$ and $\eta \sim \mu_Y$ be independent random variables. For each $n \geq 1$ put
\[
    C^n_{XX} = \E|\xi - \xi'|^n, \qquad
    C^n_{XY} = \E|\xi - \eta|^n, \qquad
    C^n_{YY} = \E|\eta - \eta'|^n.
\]

\begin{theorem}[On the construction of a counterexample]\label{thm:main}
In the notation adopted:
\begin{enumerate}
    \item[\textup{(i)}]
        $C^n_{XX} > C^n_{XY} > C^n_{YY}$ for each $n \geq 1$;
    \item[\textup{(ii)}]
        as $h \to \infty$,
        \[
            \frac{\E|X - X'|^n}{h^n} \to C^n_{XX}, \qquad
            \frac{\E|X - Y|^n}{h^n} \to C^n_{XY}, \qquad
            \frac{\E|Y - Y'|^n}{h^n} \to C^n_{YY}.
        \]
\end{enumerate}
\end{theorem}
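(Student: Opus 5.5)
Part (i) follows from Proposition~\ref{stmt:strict-three-way} once we check that $\mu_X$ is \emph{strictly wider} than $\mu_Y$ (Definition~\ref{def:strictly-wider}). Both measures are symmetric and have densities on $[-1,1]$ (after normalising the support of $K_0$ along $u$ to $[-1,1]$), so they are continuous. Their halves have densities on $[0,1]$ proportional to $A_0(t)$ and $P_0(t)$; call them $a(t)$ and $p(t)$. Their ratio $a(t)/p(t) = c\,\rho_u(t)$ is strictly increasing in $t$ by property~\propP.

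The monotone likelihood ratio then gives strict stochastic dominance. For $s\in(0,1)$ we need $F_a(s) < F_p(s)$, which is equivalent to
\[
\frac{\int_0^s a}{\int_s^1 a} < \frac{\int_0^s p}{\int_s^1 p}.
\]
This holds because $a(t)/p(t) < c\rho_u(s)$ on $[0,s)$ and $a(t)/p(t) > c\rho_u(s)$ on $(s,1]$. Hence $\int_0^s a < c\rho_u(s)\int_0^s p$ and $\int_s^1 a > c\rho_u(s)\int_s^1 p$, both strict since the sets have positive measure. Applying Proposition~\ref{stmt:strict-three-way} with $P=\mu_X$, $Q=\mu_Y$ gives (i).

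For (ii), write each point of $K_0(h)$ as $(h t, z)$ with $t=\langle\cdot,u\rangle$ on $K_0$ and $z\in u^\perp$. The interior point $X$ of $K_0(h)$ is the image of a uniform point of $K_0$, since the stretching has constant Jacobian $h$. So $X/h=(t,z/h)$, where $t$ has law exactly $\mu_X$. Because $|z|\le \operatorname{diam}K_0$, we get $|X-X'|/h \to |t-t'|$ uniformly, and $\E|X-X'|^n/h^n\to C^n_{XX}$ by bounded convergence.

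For boundary points, the surface element of $\partial K_0(h)$ must be computed. Parametrise the boundary by the slices: over height $t$ the slice boundary $\partial K_0(t)$ contributes area element $d\mathcal H^{d-2}\times h\,dt\cdot\sqrt{1+(\partial_t r/h)^2}$, where $\partial_t r$ is the normal velocity of the slice boundary. Thus the boundary density of $t$ is proportional to
\[
\int_{\partial K_0(t)}\sqrt{1+|v|^2/h^2}\,d\mathcal H^{d-2},
\]
where $v$ is the normal speed of $\partial K_0(t)$. This tends to $P_0(t)$ pointwise for $|t|<1$ by monotone convergence. The two caps at $t=\pm1$ (faces orthogonal to $u$, of fixed $(d-1)$-area) contribute $O(1)$ against a total of order $h$, so they carry vanishing mass.

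Hence the law of $\langle Y,u\rangle/h$ converges weakly to $\mu_Y$. The orthogonal components stay bounded, and so do all distances divided by $h$. Weak convergence of $Y/h$'s $u$-coordinate together with independence then yields the limits for $\E|X-Y|^n/h^n$ and $\E|Y-Y'|^n/h^n$.

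The main obstacle is making this surface-measure computation rigorous for a general convex $K_0$, where $v$ may blow up near the poles and at nonsmooth points. First I would approximate $K_0$ by polytopes, or simply work with the coarea formula for the map $x\mapsto\langle x,u\rangle$ on $\partial K_0(h)$. The coarea factor is $1/|\nabla_{\partial}t|$, which tends to $h$ times $1$ up to a correction that is nonnegative and dominated in the $L^1$ sense: the total surface area of $K_0(h)$ is $hP$-type plus $O(1)$, by monotonicity of surface area under inclusion in a stretched cylinder. This total-mass bound, combined with the pointwise limit, gives the weak convergence by Scheffé's lemma.
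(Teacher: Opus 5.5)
Your proof is correct, but it takes a different and somewhat leaner route than the paper.

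\textbf{Part (i).} The paper differentiates $\bar A(t)/\bar P(t)$ and reduces the claim to $\bar A(t)/\bar P(t)\le A_0(t)/P_0(t)$. It obtains that inequality from Chebyshev's integral inequality, which needs the extra geometric fact that the section perimeter $P_0$ is non-increasing in $|t|$. Your monotone-likelihood-ratio comparison at each cut point $s$ uses only the strict monotonicity of $A_0/P_0$ and the positivity of $P_0$ on $(0,1)$. So it is shorter and needs less input.

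\textbf{Part (ii), interior and distance transfer.} The paper proves an inductive moment-transfer lemma to pass from $|\pi_u(a-b)|$ to $|a-b|$. Your observation that $|X-X'|/h=|t-t'|+O(1/h)$ uniformly, on a bounded range, makes that lemma unnecessary; bounded convergence suffices.

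\textbf{Part (ii), boundary density.} Your normal-velocity density $\int_{\partial K_0(t)}\sqrt{1+v^2/h^2}\,d\mathcal H^{d-2}$ is exactly the paper's coarea density $\int 1/\sin\theta_h$, since $n_1/\|\mathbf n_2\|=\pm v$. So this computation is the same in substance. You finish with Scheff\'e's lemma, which gives total-variation convergence. The paper finishes with dominated convergence against test functions, together with the sharper count $\mathcal H^{d-1}(\partial K_0(h))=hS_0+O(1)$.

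\textbf{One justification to fix.} Inclusion of $K_0(h)$ in the stretched cylinder over the projection of $K_0$ onto $u^\perp$ only bounds the surface area by $2h$ times the perimeter of that projection, plus $O(1)$. That constant generally exceeds $S_0$, so it cannot feed Scheff\'e. What you need is that $\int_{-1}^1\int_{\partial K_0(t)}\sqrt{1+v^2/h^2}\,dt\to S_0$. This follows because the integrand decreases in $h$ and its value at $h=1$ is integrable, with total equal to the lateral area of $K_0$. Alternatively, use the paper's inequality $\sqrt{n_1^2+h^2\|\mathbf n_2\|^2}\le h\|\mathbf n_2\|+|n_1|$.
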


A direct consequence of the theorem is the existence of a counterexample for each power $n$.

\begin{corollary}\label{cor:counterexamples}
Under the hypotheses of Theorem~\ref{thm:main}, for any $n \geq 1$ there is $H_n > 0$ such that
\[
    \E|X - X'|^n \;>\; \E|X - Y|^n \;>\; \E|Y - Y'|^n
    \qquad \text{for all } h > H_n.
\]
In particular, for each $n \geq 1$ and $h$ large enough Conjecture~\ref{hyp:zt} fails on the family $K_0(h)$.
\end{corollary}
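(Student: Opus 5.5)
The corollary follows directly from Theorem~\ref{thm:main}. Fix $n \geq 1$. By part~(i), the three limiting constants are strictly ordered:
\[
C^n_{XX} > C^n_{XY} > C^n_{YY}.
\]
Put
\[
\varepsilon = \tfrac{1}{3}\min\bigl(C^n_{XX} - C^n_{XY},\; C^n_{XY} - C^n_{YY}\bigr) > 0 .
\]
Part~(ii) says that each normalised moment converges to its constant:
\[
\E|X-X'|^n/h^n \to C^n_{XX}, \qquad \E|X-Y|^n/h^n \to C^n_{XY}, \qquad \E|Y-Y'|^n/h^n \to C^n_{YY}
\]
as $h\to\infty$. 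So there is $H_n$ such that for all $h > H_n$ each of the three normalised quantities lies within $\varepsilon$ of its limit. For such $h$,
\[
\frac{\E|X-X'|^n}{h^n} > C^n_{XX} - \varepsilon \geq C^n_{XY} + 2\varepsilon > \frac{\E|X-Y|^n}{h^n} + \varepsilon > \frac{\E|X-Y|^n}{h^n}.
\]
The same argument gives $\E|X-Y|^n/h^n > \E|Y-Y'|^n/h^n$. Multiplying through by $h^n > 0$ yields the displayed chain of strict inequalities.

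For the final claim, take $K = K_0(h)$ with $h > H_n$. It is still a centrally symmetric convex body, since a linear stretching preserves convexity, compactness, nonempty interior and central symmetry about the origin. For this body
\[
\Delta^n(K) = \E|XX'|^n > \E|YY'|^n = \theta^n(K).
\]
This contradicts Conjecture~\ref{hyp:zt-moments} for the power $n$. For $n = 1$ it contradicts Conjecture~\ref{hyp:zt} itself.

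The argument has no real obstacle beyond what Theorem~\ref{thm:main} already provides. The only points to check are that the limits in~(ii) are finite, which holds since the constants are moments of distributions supported on $[-1,1]$, and that $H_n$ may depend on $n$. That dependence is exactly why the statement is quantified as "for each $n$ there is $H_n$" rather than uniformly in $n$.
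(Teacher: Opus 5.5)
Your proposal is correct and follows the paper's own proof: the strict ordering of the limits from part~(i) is carried over to the quantities themselves for large $h$ via the convergence in part~(ii), and your explicit $\varepsilon$-argument simply spells this out. Your remark that for $n>1$ the example refutes Conjecture~\ref{hyp:zt-moments} rather than Conjecture~\ref{hyp:zt} itself is a correct refinement of the paper's looser wording.
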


\begin{proof}
Each of the three ratios $\E|X-X'|^n/h^n$, $\E|X-Y|^n/h^n$, $\E|Y-Y'|^n/h^n$ converges to the corresponding limit by part~(ii) of Theorem~\ref{thm:main}, and the limits themselves are strictly ordered, $C^n_{XX} > C^n_{XY} > C^n_{YY}$, by part~(i). Therefore, starting from some $H_n$, this strict ordering is preserved for the quantities themselves as well.
\end{proof}

Let us pass to the proof of Theorem~\ref{thm:main}. The idea reproduces the scheme of Section~\ref{sec:centrally-symmetric}, run in the opposite direction: instead of the majorisation of the projections of the boundary point over the interior one, we shall show that under property~\propP, in the limit $h \to \infty$, the scaled projection of the \emph{interior} point turns out to be strictly wider than the projection of the \emph{boundary} one.

Fix $K_0$ as in the hypothesis of the theorem. Denote $\pi_u(x) = \langle x, u \rangle$, and for each $h > 0$ denote by $X_h, X_h'$ independent uniformly distributed interior points of the body $K_0(h)$, and by $Y_h, Y_h'$ independent uniformly distributed points on $\partial K_0(h)$.

\subsubsection{Limiting distributions of the projections}

The supports of $\pi_u(X_h)$ and $\pi_u(Y_h)$ grow linearly in $h$, so it is natural to work with the scaled projections
\[
    T^X_h := \frac{\pi_u(X_h)}{h}, \qquad T^Y_h := \frac{\pi_u(Y_h)}{h},
\]
both with support in $[-1, 1]$ and both symmetric with respect to zero.

\paragraph*{The interior projection.} The anisotropic stretching scales the $u$-coordinate by $h$ and leaves the coordinates on $u^\perp$ unchanged. Therefore the section of $K_0(h)$ at the level $\pi_u = s$ is isometric to the section $K_0(s/h)$ of the original body $K_0$ as a subset of $u^\perp$, and, in particular, their $(d-1)$-dimensional volumes coincide and equal $A_0(s/h)$. The density of $\pi_u(X_h)$ at a point $s \in [-h, h]$ equals
\[
    \frac{A_0(s/h)}{\mathrm{vol}_d(K_0(h))},
\]
where $\mathrm{vol}_d(K_0(h)) = h \cdot V_0$, $V_0 := \int_{-1}^1 A_0(t)\, dt$. After the change of scale, the density of $T^X_h$ has the form
\[
    f_{T^X_h}(t) = \frac{A_0(t)}{V_0}, \qquad t \in [-1, 1],
\]
and \emph{does not depend on $h$}.

\begin{statement}\label{prop:interior-proj}
For any $h > 0$ the distribution of the random variable $T^X_h$ coincides with the measure $\mu_X$ on $[-1, 1]$ having density $A_0(|t|)/V_0$. In particular, on the side of the interior point no passage to the limit is required.
\end{statement}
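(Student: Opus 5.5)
The plan is a direct change of variables via Fubini; there is no limiting argument. Let $\phi_h \colon \R^d \to \R^d$ be the stretching. In the orthogonal decomposition $x = \langle x,u\rangle u + x^\perp$ it sends $x$ to $h\langle x,u\rangle u + x^\perp$. This map is linear with Jacobian determinant $h$. Hence $\mathrm{vol}_d(K_0(h)) = h\,\mathrm{vol}_d(K_0) = h V_0$, where the identity $\mathrm{vol}_d(K_0) = \int_{-1}^1 A_0(t)\,dt$ is Fubini applied to $K_0$ sliced orthogonally to $u$. Here we use the normalisation implicit in the statement, namely that the support of $K_0$ relative to $u$ is $[-1,1]$.

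First I will compute the distribution function of $\pi_u(X_h)$. For $s \in [-h,h]$, Fubini applied to $K_0(h)$ gives
\[
\P\bigl(\pi_u(X_h) \le s\bigr) = \frac{1}{hV_0}\int_{-h}^{s} \mathcal{H}^{d-1}\bigl(K_0(h) \cap \{\pi_u = r\}\bigr)\,dr .
\]
The slice of $K_0(h)$ at level $r$ is the image of the slice $K_0(r/h)$ under $\phi_h$. On the hyperplane $\{\pi_u = r/h\}$ the map $\phi_h$ acts as the identity on the $u^\perp$-coordinates, followed by a translation along $u$. It is therefore an isometry onto the hyperplane $\{\pi_u = r\}$, so the $(d-1)$-volume of the slice equals $A_0(r/h)$. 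Substituting $r = h\tau$ turns the expression into
\[
\P\bigl(T^X_h \le s/h\bigr) = \frac{1}{V_0}\int_{-1}^{s/h} A_0(\tau)\,d\tau .
\]
This is exactly $\mu_X\bigl((-\infty, s/h]\bigr)$. It follows that $T^X_h \sim \mu_X$ for every $h$.

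Finally, the density written as $A_0(|t|)$ is justified by central symmetry. The reflection $x \mapsto -x$ maps $K_0(t)$ onto $K_0(-t)$ isometrically, so $A_0(t) = A_0(-t)$. The only technical points are measurability and integrability of $A_0$, and both are immediate: $A_0^{1/(d-1)}$ is concave on $[-1,1]$ by Brunn--Minkowski, so $A_0$ is continuous in the interior and bounded. I do not expect a real obstacle. The one step that needs care is the isometry claim for the slices, because it is precisely what makes the factor $h$ appear only through $ds$ and not in the slice volumes.
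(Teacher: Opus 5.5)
Your proof is correct and follows the paper's argument. You slice $K_0(h)$ orthogonally to $u$, note that each slice is a translated isometric copy of the corresponding slice of $K_0$ (so its volume is $A_0(s/h)$), and rescale by $h$ using $\mathrm{vol}_d(K_0(h)) = hV_0$. The differences are cosmetic: you work with the distribution function instead of the density, and you make explicit the evenness $A_0(t) = A_0(-t)$ and the normalisation of the support to $[-1,1]$, which the paper leaves implicit.
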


\paragraph*{The boundary projection.} On the boundary side the passage to the limit is already non-trivial. The idea is simple: under a strong stretching along $u$ the lateral boundary $\partial K_0(h)$ becomes almost parallel to $u$, so almost all of its $(d-1)$-dimensional measure is concentrated on the ``side of the cylinder'', and its cross-section gives exactly the measure $\mu_Y$.

Let us formalise this via Federer's coarea formula~\cite[Theorem~3.2.22]{federer}: for a hypersurface $M$ and a direction $u$ the density of the image $(\pi_u)_* \mathcal{H}^{d-1}|_M$ at a regular value $s$ equals
\[
    \rho(s) = \int_{M \cap \{\pi_u = s\}} \frac{d\mathcal{H}^{d-2}(y)}{\sin\theta(y)},
\]
where $\theta(y)$ is the angle between $u$ and the outer normal to $M$ at the point $y$. In order to write down with its help the density of $T^Y_h$, we need to know three things about $M = \partial K_0(h)$: the unit normal $\nu_h$, the angle $\theta_h$ between $\nu_h$ and $u$, and the total $(d-1)$-measure of $\partial K_0(h)$ (for the normalisation into a probability density).

\medskip
\noindent\emph{The normal and the angle.} The anisotropic stretching $M_h = \mathrm{diag}(h, I_{d-1})$ takes a point $x \in \partial K_0$ to $M_h(x) \in \partial K_0(h)$. The normal is transformed as follows: if $\nu(x) = (n_1, \mathbf{n}_2)$ is the unit normal to $\partial K_0$ at the point $x$ (with the decomposition $n_1$ along $u$, $\mathbf{n}_2 \in u^\perp$), then the normal to $\partial K_0(h)$ at the point $M_h(x)$ is proportional to $M_h^{-\top} \nu = (n_1/h, \mathbf{n}_2)$. Normalising, we obtain
\[
    \sin\theta_h(x) = \frac{\|\mathbf{n}_2\|}{\sqrt{n_1^2/h^2 + \|\mathbf{n}_2\|^2}}.
\]
In the limit $h \to \infty$ this gives $\sin\theta_h \to 1$ everywhere where $\|\mathbf{n}_2\| > 0$ --- that is, everywhere except the ``horizontal caps'' of $\partial K_0$, whose normal is parallel to $u$. This is precisely the quantity appearing in the denominator of the coarea formula.

\medskip
\noindent\emph{The total $(d-1)$-measure.} For a linear map $M$ the Jacobian carrying the $(d-1)$-dimensional measure from a hypersurface with unit normal $\nu$ to its image under $M$ equals $|\det M| \cdot \|M^{-\top} \nu\|$. Applying this to $M = M_h$ and $\partial K_0$:
\[
    \mathcal{H}^{d-1}(\partial K_0(h))
    = \int_{\partial K_0} h \cdot \|(n_1/h, \mathbf{n}_2)\|\, d\mathcal{H}^{d-1}
    = \int_{\partial K_0} \sqrt{n_1^2 + h^2 \|\mathbf{n}_2\|^2}\, d\mathcal{H}^{d-1}.
\]
Since $h\|\mathbf{n}_2\| \leq \sqrt{n_1^2 + h^2 \|\mathbf{n}_2\|^2} \leq h\|\mathbf{n}_2\| + |n_1|$, we have
\[
    \mathcal{H}^{d-1}(\partial K_0(h))
    = h \int_{\partial K_0} \|\mathbf{n}_2\|\, d\mathcal{H}^{d-1} + O(1)
    = h S_0 + O(1),
\]
where $S_0 := \int_{\partial K_0} \|\mathbf{n}_2\|\, d\mathcal{H}^{d-1} = \int_{-1}^{1} P_0(t)\, dt$ is the integral of $\|\mathbf{n}_2\|$ over $\partial K_0$ (the same thing as the coarea formula for $\pi_u$ on $\partial K_0$ itself with the test function $g \equiv 1$: the integrand there is $|\nabla^M \pi_u| = \sin\theta_0 = \|\mathbf{n}_2\|$).

\medskip
\noindent\emph{The slice.} Since $M_h$ is the identity on $u^\perp$, the slice $\partial K_0(h) \cap \{\pi_u = ht\}$ is isometric to the slice $\partial K_0 \cap \{\pi_u = t\}$ of the original body, so its $(d-2)$-measure equals $P_0(t)$.

\medskip
\noindent\emph{The density.} Substituting everything into the coarea formula and passing to the normalised projection $T^Y_h = \pi_u(Y_h)/h$,
\[
    f_{T^Y_h}(t)
    = \frac{h}{\mathcal{H}^{d-1}(\partial K_0(h))}
      \int_{\partial K_0(t)} \frac{d\mathcal{H}^{d-2}(y)}{\sin\theta_h(t, y)}
    = \frac{h}{h S_0 + O(1)}
      \int_{\partial K_0(t)} \frac{d\mathcal{H}^{d-2}(y)}{\sin\theta_h(t, y)}.
\]
This is the \emph{explicit density} of the projection $T^Y_h$ for each finite $h$.

\medskip
\noindent\emph{Passage to the limit.} Fix $t \in (-1, 1)$ and a compact set $[-1+\varepsilon, 1-\varepsilon]$ around it. On the slice $\partial K_0(t)$ the quantity $\|\mathbf{n}_2\|$ is bounded away from zero (the normals point strictly sideways rather than along $u$), so there is $c > 0$ such that $\sin\theta_h(t, y) \geq c$ for all $y$ on the slice and all $h \geq 1$. The integrand $1/\sin\theta_h(t, y)$ is bounded by the constant $1/c$ and converges pointwise to $1$. By Lebesgue's dominated convergence theorem,
\[
    \int_{\partial K_0(t)} \frac{d\mathcal{H}^{d-2}(y)}{\sin\theta_h(t, y)}
    \;\xrightarrow[h \to \infty]{}\;
    P_0(t).
\]
Combining this with $\mathcal{H}^{d-1}(\partial K_0(h)) = h S_0 + O(1)$, we obtain pointwise $f_{T^Y_h}(t) \to P_0(t)/S_0$ on $(-1, 1)$. One more application of Lebesgue's theorem to $\int \varphi(t) f_{T^Y_h}(t)\, dt$ for a continuous bounded $\varphi$ gives the convergence of the integrals, that is, the weak convergence of $T^Y_h$ to the measure with density $P_0(t)/S_0$.

\medskip
\noindent\emph{The polar caps.} It remains to take into account the neighbourhoods of $t = \pm 1$, where $\|\mathbf{n}_2\| = 0$: on these parts of $\partial K_0$ the normal is parallel to $u$ and the integrand in the coarea formula is singular. These are the ``horizontal caps'' of the body, perpendicular to the direction of stretching. Their $(d-1)$-measure on $\partial K_0(h)$ is exactly equal to their $(d-1)$-measure on $\partial K_0$: on a cap $\mathbf{n}_2 = 0$, so in the formula above $\sqrt{n_1^2 + h^2 \cdot 0} = |n_1| = 1$, and the Jacobian of $M_h$ equals $h \cdot 1/h = 1$. Thereby the caps contribute $O(1)$ to the total measure, while the total measure of $\partial K_0(h)$ grows like $h S_0$. Their relative weight in the normalised distribution $T^Y_h$ tends to zero, and the passage to the limit is not affected.

\begin{statement}\label{prop:boundary-proj}
As $h \to \infty$ the distribution of $T^Y_h$ converges weakly to the measure $\mu_Y$ with density $P_0(|t|)/S_0$. Since the support $[-1, 1]$ is compact, this convergence automatically strengthens to the convergence of every polynomial moment.
\end{statement}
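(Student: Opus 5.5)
We organise the proof in three steps: write the distribution of $T^Y_h$ as an exact decomposition, prove weak convergence by testing against bounded continuous functions, and then upgrade to moments.

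\textbf{Step 1: decomposition.} Split $\partial K_0$ into the lateral part $L=\{\|\mathbf{n}_2\|>0\}$ and the caps $C=\{\mathbf{n}_2=0\}$. By central symmetry and convexity, $C$ lies in the two supporting hyperplanes $\pi_u=\pm1$. Let $\nu_h$ be the law of $T^Y_h$. Then
$$\nu_h=\frac{|L_h|}{|L_h|+|C|}\,\nu_h^L+\frac{|C|}{|L_h|+|C|}\,\nu^C,$$
where $L_h=M_h(L)$, $\nu_h^L$ is the normalised projection of the lateral part, and $\nu^C$ is concentrated on $\{\pm1\}$. The Jacobian computation above gives that $|C|$ does not depend on $h$ and $|L_h|=hS_0+O(1)$. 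Hence the weight of the caps is $O(1/h)$. For any bounded continuous $\varphi$ the cap term therefore contributes $O(\|\varphi\|_\infty/h)$, and it suffices to show $\int\varphi\,d\nu_h^L\to\int\varphi\,d\mu_Y$.

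\textbf{Step 2: the lateral part, avoiding pointwise densities.} Instead of the coarea formula we use the area formula directly on $\partial K_0$:
$$\int_{L_h}\varphi(\pi_u(y)/h)\,d\mathcal H^{d-1}=\int_{L}\varphi(\pi_u(x))\sqrt{n_1^2+h^2\|\mathbf{n}_2\|^2}\,d\mathcal H^{d-1}(x).$$
Divide by $h$. The integrand is dominated by $\|\varphi\|_\infty(|n_1|+\|\mathbf{n}_2\|)\le 2\|\varphi\|_\infty$ and converges pointwise to $\varphi(\pi_u(x))\|\mathbf{n}_2\|$. By dominated convergence the limit is $\int_{\partial K_0}\varphi(\pi_u)\|\mathbf{n}_2\|\,d\mathcal H^{d-1}$. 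The coarea formula on $\partial K_0$ itself, whose tangential gradient of $\pi_u$ has norm $\|\mathbf{n}_2\|$, rewrites this as $\int_{-1}^1\varphi(t)P_0(t)\,dt$. Here $P_0(t)$ is the $\mathcal H^{d-2}$-measure of the slice; for a.e.\ $t\in(-1,1)$ that slice is the relative boundary of $K_0(t)$. Taking $\varphi\equiv1$ recovers $S_0$. Dividing the two limits yields weak convergence of $\nu_h^L$, and hence of $\nu_h$, to $\mu_Y$.

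This route sidesteps the delicate step in the sketch above: a uniform lower bound on $\sin\theta_h$ near $t=\pm1$. Such a bound may fail, since normals can tilt towards $u$ without lying on a cap. The only care still needed is to identify $\partial K_0\cap\{\pi_u=t\}$ with $\partial K_0(t)$ for almost every $t$. This is true because the exceptional levels, those meeting a face with normal $\pm u$, can only be $t=\pm1$. Making this identification and the area formula rigorous for a general convex (Lipschitz) surface is where I expect the main technical effort.

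\textbf{Step 3: moments.} All the measures $\nu_h$ and $\mu_Y$ are supported in $[-1,1]$. On this support, $t\mapsto t^k$ and $|t|^p$ are bounded continuous functions, so weak convergence immediately gives convergence of every polynomial (and absolute) moment. The same argument applies to products of independent copies, via weak convergence of product measures, which is what Theorem~\ref{thm:main}(ii) will later require.
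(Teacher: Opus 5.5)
Your proof is correct, and it takes a genuinely different route from the paper's. The paper applies the coarea formula on the stretched surface $\partial K_0(h)$ for each fixed $h$. This gives an explicit density of $T^Y_h$ containing the factor $1/\sin\theta_h$. The paper then proves pointwise convergence of this density to $P_0(t)/S_0$ slice by slice, integrates against $\varphi$, and discards the caps as $O(1/h)$ mass.

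You instead pull the test integral back to the fixed surface $\partial K_0$ by the area formula. There the integrand $\varphi(\pi_u)\sqrt{n_1^2/h^2+\|\mathbf{n}_2\|^2}$ is bounded on a finite measure space, and you invoke the coarea formula only once, for $K_0$ itself. This buys more than you state. Since $0\le\sqrt{n_1^2/h^2+\|\mathbf{n}_2\|^2}-\|\mathbf{n}_2\|\le|n_1|/h$, no limit theorem is needed at all. You get $\bigl|\int\varphi\,d\nu_h-\int\varphi\,d\mu_Y\bigr| = O(\|\varphi\|_\infty/h)$ uniformly over bounded Borel $\varphi$, which is convergence in total variation at rate $O(1/h)$. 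The paper's route, in exchange, produces an explicit density of $T^Y_h$ for each finite $h$.

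Two of your side remarks need correcting.
\begin{itemize}
\item The paper does not claim a lower bound on $\sin\theta_h$ near $t=\pm1$, only for $t$ in compact subsets of $(-1,1)$, and there the bound holds. Testing the normal $(n_1,\mathbf{n}_2)$ at a boundary point of height $t$ against points of $K_0$ at heights $\pm1$ gives $|n_1|(1-|t|)\le\mathrm{diam}(K_0)\,\|\mathbf{n}_2\|$. What the paper leaves implicit is an integrable majorant for the final integral in $t$ near the poles. It exists because $1/\sin\theta_h$ decreases in $h$, so for $h\ge1$ it is at most $1/\|\mathbf{n}_2\|$. The slice integrals of $1/\|\mathbf{n}_2\|$ integrate over $t$ to the $\mathcal{H}^{d-1}$-measure of the lateral part of $\partial K_0$, which is finite.
\item The identification of $\partial K_0\cap\{\pi_u=t\}$ with the relative boundary of the section $K_0(t)$ holds for every $t\in(-1,1)$, not just almost every $t$. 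For a hyperplane $H$ meeting $\mathrm{int}\,K_0$ one has $\mathrm{relint}(K_0\cap H)=\mathrm{int}\,K_0\cap H$. This is an elementary convexity fact, not the main technical burden you anticipate.
\end{itemize}
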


\subsubsection{Comparison of the moments of the one-dimensional projections}

Having at hand the limiting measures
\[
    \mu_X(\mathrm{d}t) = \frac{A_0(|t|)}{V_0}\, dt, \qquad
    \mu_Y(\mathrm{d}t) = \frac{P_0(|t|)}{S_0}\, dt,
\]
let us show that under property \propP{} the first is wider than the second. Since both are symmetric with respect to zero, by the definition of width (see~\ref{def:wider}) it suffices to compare the distributions of $|\xi|$ and $|\eta|$ for $\xi \sim \mu_X$, $\eta \sim \mu_Y$. Their distribution functions on $[0, 1]$ are:
\[
    F_X(t) = \frac{2}{V_0} \int_0^t A_0(s)\, ds, \qquad
    F_Y(t) = \frac{2}{S_0} \int_0^t P_0(s)\, ds.
\]

\begin{lemma}[Majorisation of the projections]\label{lem:proj-majorisation}
Under property \propP{} one has
\[
    F_X(t) \;\leq\; F_Y(t) \qquad \text{for all } t \in [0, 1],
\]
with strict inequality on $(0, 1)$. Equivalently, $|\xi|$ strictly stochastically majorises $|\eta|$ on $(0, 1)$.
\end{lemma}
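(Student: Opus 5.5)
We must show $F_X(t)\le F_Y(t)$ on $[0,1]$, with strict inequality on $(0,1)$. Here $F_X(t)=\frac{\int_0^t A_0}{\int_0^1 A_0}$ and $F_Y(t)=\frac{\int_0^t P_0}{\int_0^1 P_0}$; the normalisations agree because $V_0=2\int_0^1A_0$ and $S_0=2\int_0^1P_0$ by evenness. Write $A_0(s)=\rho(s)P_0(s)$ with $\rho=\rho_u$ strictly increasing on $[0,1)$. Then $F_X$ is simply the distribution function of the probability measure $\nu(ds)\propto P_0(s)\,ds$ on $[0,1]$ after reweighting it by the increasing function $\rho$. The claim is the classical fact that reweighting by an increasing density shifts mass to the right, in the sense of first-order stochastic dominance. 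So the plan is a Chebyshev-type (monotone likelihood ratio) argument.

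\emph{Key step.} Fix $t\in(0,1)$ and set
\[
a=\int_0^tP_0,\qquad b=\int_t^1P_0,\qquad \alpha=\int_0^t\rho P_0,\qquad \beta=\int_t^1\rho P_0 .
\]
The inequality $F_X(t)\le F_Y(t)$ reads $\frac{\alpha}{\alpha+\beta}\le\frac{a}{a+b}$, which is equivalent to $\alpha b\le \beta a$, i.e. $\frac{\alpha}{a}\le\frac{\beta}{b}$.

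Since $\rho$ is increasing, the left side satisfies $\frac{\alpha}{a}\le\rho(t)$: it is the $P_0$-weighted average of $\rho$ over $[0,t]$. By the same reasoning $\frac{\beta}{b}\ge\rho(t)$, being the average of $\rho$ over $[t,1]$. This gives the non-strict inequality. The endpoint cases $t=0$ and $t=1$ are trivial equalities.

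\emph{Strictness.} We need $a,b>0$ and strict monotonicity. Since $K_0$ has nonempty interior and $d\ge3$, each section $K_0(s)$ for $|s|<1$ is a $(d-1)$-dimensional convex body, so $P_0(s)>0$ there. Hence $P_0>0$ almost everywhere on $(0,1)$, and both $[0,t]$ and $[t,1]$ carry positive $P_0$-mass. Because $\rho$ is strictly increasing, $\rho(s)<\rho(t)$ for $s<t$ on a set of positive $P_0$-measure. This makes $\frac{\alpha}{a}<\rho(t)\le\frac{\beta}{b}$.

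\emph{Main obstacle.} The only delicate points are technical rather than conceptual:
\begin{itemize}
\item integrability and positivity of $P_0$ near $t=\pm1$, where the sections may degenerate. This is harmless because $P_0$ is bounded by the perimeter of the projection of $K_0$ onto $u^\perp$;
\item the interpretation of property \propP{} at the endpoint $t=1$, where $\rho$ may be undefined. I would handle this by working on $[0,1)$ only, which is a full-measure set.
\end{itemize}

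Finally, since the half-distributions of $\mu_X,\mu_Y$ have distribution functions $F_X,F_Y$, this is exactly the statement that $\mu_X$ is strictly wider than $\mu_Y$ in the sense of Definition~\ref{def:strictly-wider}. Both measures are continuous, so Proposition~\ref{stmt:strict-three-way} then yields part (i) of Theorem~\ref{thm:main}.
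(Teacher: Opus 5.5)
Your proof is correct, and it takes a different and more economical route than the paper.

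The paper argues as follows:
\begin{itemize}
\item It differentiates $t\mapsto \bar A(t)/\bar P(t)$ to show it is strictly increasing. This reduces the claim to the pointwise-versus-average inequality \eqref{eq:pointwise-vs-average}.
\item It proves that inequality in two steps. First it applies Chebyshev's inequality for oppositely monotone integrands. This needs the extra geometric fact that the section perimeter $P_0$ is non-increasing in $|s|$, which the paper asserts without proof. The fact is true, e.g.\ via the Brunn--Minkowski inequality for surface area together with $K_0(-s)=-K_0(s)$.
\item Second, it uses that the unweighted mean of $\rho$ over $[0,t]$ lies below $\rho(t)$.
\end{itemize}

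You instead fix $t$ and separate the $P_0$-weighted averages of $\rho$ on $[0,t]$ and on $[t,1]$ by the value $\rho(t)$. This is the standard ``monotone likelihood ratio implies first-order dominance'' argument. It uses only the monotonicity of $\rho$ and the positivity of $P_0$ on the open support. It needs no differentiation and no monotonicity of $P_0$.

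Your bound $\int_0^t\rho P_0<\rho(t)\int_0^t P_0$ already gives \eqref{eq:pointwise-vs-average} directly, so the Chebyshev step in the paper can be dropped. Your handling of strictness (ensuring $a,b>0$) and of the endpoint $t=1$, where $\rho$ may be undefined, is also more explicit than the paper's.
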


\begin{proof}
Denote $\bar A(t) = \int_0^t A_0(s)\, ds$, $\bar P(t) = \int_0^t P_0(s)\, ds$; then $V_0 = 2 \bar A(1)$, $S_0 = 2 \bar P(1)$. The desired inequality $F_X \leq F_Y$ is written as $\bar A(t)/\bar A(1) \leq \bar P(t)/\bar P(1)$, or, equivalently,
\[
    \frac{\bar A(t)}{\bar P(t)} \;\leq\; \frac{\bar A(1)}{\bar P(1)} \qquad \text{for all } t \in (0, 1],
\]
with strict inequality on $(0, 1)$. It suffices to show that the function $t \mapsto \bar A(t)/\bar P(t)$ is strictly increasing on $(0, 1)$.

A direct computation gives
\[
    \frac{d}{dt} \frac{\bar A(t)}{\bar P(t)} = \frac{A_0(t) \bar P(t) - P_0(t) \bar A(t)}{\bar P(t)^2}.
\]
The sign of the numerator coincides (after division by the positive $P_0(t) \bar P(t)$) with the sign of the difference $A_0(t)/P_0(t) - \bar A(t)/\bar P(t)$. Therefore it suffices to establish
\begin{equation}\label{eq:pointwise-vs-average}
    \frac{\bar A(t)}{\bar P(t)} \;\leq\; \frac{A_0(t)}{P_0(t)} \qquad \text{for } t \in (0, 1],
\end{equation}
with strict inequality on $(0, 1)$.

Let us write the numerator of the left-hand side as the integral of a product of two functions:
\[
    \bar A(t) = \int_0^t \frac{A_0(s)}{P_0(s)} \cdot P_0(s)\, ds.
\]
On the segment $[0, t]$ the function $s \mapsto A_0(s)/P_0(s)$ is increasing (by property \propP), while $s \mapsto P_0(s)$ is non-increasing (for a centrally symmetric convex body the perimeter of a section is maximal at the centre and is non-increasing in $|s|$). Chebyshev's inequality for oppositely monotone integrands gives
\[
    \int_0^t \frac{A_0(s)}{P_0(s)} \cdot P_0(s)\, ds \;\leq\; \frac{1}{t} \int_0^t \frac{A_0(s)}{P_0(s)}\, ds \cdot \int_0^t P_0(s)\, ds.
\]
Dividing by $\bar P(t) = \int_0^t P_0(s)\, ds$, we obtain
\[
    \frac{\bar A(t)}{\bar P(t)} \;\leq\; \frac{1}{t} \int_0^t \frac{A_0(s)}{P_0(s)}\, ds.
\]
Since $A_0/P_0$ is strictly increasing, its mean over $[0, t]$ is strictly less than its value at the right endpoint:
\[
    \frac{1}{t} \int_0^t \frac{A_0(s)}{P_0(s)}\, ds \;<\; \frac{A_0(t)}{P_0(t)} \qquad \text{for } t \in (0, 1).
\]
Combining the last two inequalities, we obtain~\eqref{eq:pointwise-vs-average} with strict inequality on $(0, 1)$. Consequently, $\bar A(t)/\bar P(t)$ is strictly increasing on $(0, 1)$, which completes the proof.
\end{proof}

From the strict stochastic majorisation of the absolute values there follows the strict ordering of the moments of all three types of differences. Let us verify for $\mu_X$, $\mu_Y$ the hypotheses of Proposition~\ref{stmt:strict-three-way}. By Lemma~\ref{lem:proj-majorisation} the measure $\mu_X$ is strictly wider than $\mu_Y$ in the sense of Definition~\ref{def:strictly-wider}. Both measures are continuous (with densities $A_0(|t|)/V_0$ and $P_0(|t|)/S_0$); let us only remark that for finite $h$ the projection of the boundary point has an atom whenever the direction $u$ is perpendicular to a face of the body (the whole face is projected into a single point) --- such as the polar caps $\pm H_a$, perpendicular to $u = e_1$ --- however the weight of such an atom is $O(1/h)$ and vanishes in the limit (see Proposition~\ref{prop:boundary-proj}), so the limiting measure $\mu_Y$ has no atoms. Therefore Proposition~\ref{stmt:strict-three-way} from Section~\ref{sec:linear-symmetric} gives part~(i) of Theorem~\ref{thm:main}:
\[
    C^n_{XX} \;>\; C^n_{XY} \;>\; C^n_{YY} \qquad \text{for all } n \geq 1.
\]

\subsubsection{Comparison of the full moments of the distances}

It remains to pass from the one-dimensional projections to the full Euclidean distances on $K_0(h)$. The stretching along $u$ makes the $u$-component of the displacement of order $h$, whereas the $u^\perp$-component remains bounded independently of $h$. After the normalisation by $h^n$ only the $u$-component survives in the limit.

More precisely: for any $a, b \in K_0(h)$ the orthogonal component $a - b - \pi_u(a - b)\, u$ lies in a set of diameter $R := \mathrm{diam}(K_0)$ independently of $h$. Therefore
\begin{equation}\label{eq:sandwich}
    |\pi_u(a - b)| \;\leq\; |a - b| \;\leq\; \sqrt{\pi_u(a - b)^2 + R^2} \;\leq\; |\pi_u(a - b)| + R,
\end{equation}
where the last inequality follows from $\sqrt{x^2 + y^2} \leq |x| + |y|$ for $x, y \geq 0$. Thus $|a-b|$ is squeezed between $|\pi_u(a-b)|$ and $|\pi_u(a-b)| + R$, and the passage to the limit of the moments reduces to the following technical lemma.

\begin{lemma}[Moment transfer lemma]\label{lem:squeeze-moment}
Let $\{Z_h\}_{h > 0}$ be a family of non-negative random variables and let
\[
    \lim_{h \to \infty} \frac{\E[Z_h^m]}{h^m} = C_m \qquad \text{for each } m > 0.
\]
Then for any $c \geq 0$
\[
    \lim_{h \to \infty} \frac{\E[(Z_h + c)^m]}{h^m} = C_m \qquad \text{for each } m > 0.
\]
\end{lemma}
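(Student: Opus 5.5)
The argument will be a two-sided squeeze carried out separately for each fixed $m>0$; only the hypothesis for that same $m$ is needed. The lower bound is immediate: since $c\ge 0$ and $Z_h\ge 0$, we have $(Z_h+c)^m\ge Z_h^m$ pointwise. Hence
\[
\liminf_{h\to\infty}\frac{\E[(Z_h+c)^m]}{h^m}\;\ge\;\lim_{h\to\infty}\frac{\E[Z_h^m]}{h^m}=C_m .
\]
It remains to show that the $\limsup$ is at most $C_m$. For this I will split into the cases $m\ge 1$ and $0<m<1$, because the function $t\mapsto t^m$ behaves differently in the two regimes.

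\emph{The case $m\ge 1$.} Here I will use Minkowski's inequality in $L^m$, applied to the sum of the random variable $Z_h$ and the constant $c$:
\[
\bigl(\E[(Z_h+c)^m]\bigr)^{1/m}\;\le\;\bigl(\E[Z_h^m]\bigr)^{1/m}+c .
\]
Dividing by $h$ gives
\[
\frac{\bigl(\E[(Z_h+c)^m]\bigr)^{1/m}}{h}\;\le\;\left(\frac{\E[Z_h^m]}{h^m}\right)^{1/m}+\frac{c}{h}.
\]
The first term on the right tends to $C_m^{1/m}$ by hypothesis, and $c/h\to 0$. Raising to the power $m$, which is continuous and monotone on $\R_+$, yields $\limsup_h \E[(Z_h+c)^m]/h^m\le C_m$.

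\emph{The case $0<m<1$.} Here I will use subadditivity of $t\mapsto t^m$ on $\R_+$, namely $(a+b)^m\le a^m+b^m$. This gives $\E[(Z_h+c)^m]\le \E[Z_h^m]+c^m$. After dividing by $h^m$, the extra term $c^m/h^m$ tends to $0$, so again $\limsup_h \E[(Z_h+c)^m]/h^m\le C_m$. Combining either upper bound with the lower bound gives the claimed limit.

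I do not expect a genuine obstacle; the only points needing care are the following.
\begin{itemize}
\item $C_m$ is finite. This is implicit in the hypothesis, since a limit is assumed to exist as a real number, and it is what makes the Minkowski step meaningful.
\item The case split at $m=1$ is necessary: Minkowski fails for $m<1$, while subadditivity fails for $m>1$.
\end{itemize}
In the application, via \eqref{eq:sandwich}, one takes $Z_h=|\pi_u(a-b)|$ for the relevant random pair and $c=R$. The lemma then transfers the projected limits from Propositions~\ref{prop:interior-proj} and~\ref{prop:boundary-proj} to the full distances, which gives part~(ii) of Theorem~\ref{thm:main}.
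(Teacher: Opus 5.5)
Your proof is correct, but it takes a different route from the paper's.

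\textbf{The paper's route.} The paper argues by induction on $\lceil m\rceil$, using only pointwise calculus bounds.
For $0<m\le 1$ it uses $(a+c)^m\le a^m+c+(1+c)^m$, obtained by splitting at $a=1$ and applying the mean value theorem.
For $m>1$ it uses the mean value bound $(a+c)^m\le a^m+mc\,(a+c)^{m-1}$. The error term is then controlled by $\E[(Z_h+c)^{m-1}]=O(h^{m-1})$, which is the induction hypothesis.
So for a given $m$, the paper's argument relies on the hypothesis at $m-1, m-2,\dots$ as well.

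\textbf{Your route.} You treat each $m$ separately.
For $m\ge 1$, Minkowski's inequality gives $\E[(Z_h+c)^m]\le\bigl((\E Z_h^m)^{1/m}+c\bigr)^m$.
For $0<m<1$, subadditivity of $t\mapsto t^m$ does the job.

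\textbf{Comparison.} Your approach buys three things:
\begin{itemize}
\item a slightly stronger statement, since the conclusion for $m$ needs only the hypothesis for that same $m$;
\item an explicit quantitative upper bound;
\item no induction at all.
\end{itemize}
It also sidesteps the paper's loose asymptotic $\E[(Z_h+c)^{m-1}]\sim C_{m-1}h^{m-1}$, which is only meaningful as $O(h^{m-1})$ when $C_{m-1}=0$.

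The paper's version stays within elementary pointwise inequalities. It does need the whole ladder of lower moments, but that is harmless in the application, where all moments of the projections converge.
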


\begin{proof}
Induction on $k \in \N$ with the statement: the conclusion holds for all $m \leq k$.

\emph{Base $k = 1$.} For $a, c \geq 0$ and $0 < m \leq 1$:
\[
    a^m \;\leq\; (a + c)^m \;\leq\; a^m + c + (1 + c)^m.
\]
The left inequality is trivial. The right one is by cases: for $a \leq 1$ we have $(a + c)^m \leq (1 + c)^m$, and for $a \geq 1$, by the mean value theorem, $(a + c)^m \leq a^m + m c \cdot a^{m-1} \leq a^m + c$. Applying this pointwise with $a = Z_h$, taking the expectation and dividing by $h^m$, we obtain
\[
    \frac{\E[Z_h^m]}{h^m} \;\leq\; \frac{\E[(Z_h + c)^m]}{h^m} \;\leq\; \frac{\E[Z_h^m] + c + (1 + c)^m}{h^m},
\]
and by the squeeze theorem the right limit equals $C_m$.

\emph{Step $k \to k + 1$.} For $a, c \geq 0$ and $m \geq 1$, by the mean value theorem,
\[
    a^m \;\leq\; (a + c)^m \;\leq\; a^m + m c (a + c)^{m-1}.
\]
Therefore for $1 < m \leq k + 1$
\[
    \frac{\E[Z_h^m]}{h^m} \;\leq\; \frac{\E[(Z_h + c)^m]}{h^m} \;\leq\; \frac{\E[Z_h^m] + m c \cdot \E[(Z_h + c)^{m-1}]}{h^m}.
\]
Since $m - 1 \leq k$, by the induction hypothesis $\E[(Z_h + c)^{m-1}] \sim C_{m-1} h^{m-1} = o(h^m)$, and the same passage to the limit gives the required equality.
\end{proof}

Let us apply Lemma~\ref{lem:squeeze-moment} to each of the three pairs.

\medskip
\noindent\emph{The pair of interior points.} Put $Z_h := |\pi_u(X_h - X_h')|$. By Proposition~\ref{prop:interior-proj} the distribution of $T^X_h$ coincides with $\mu_X$ for every $h$, so
\[
    \frac{\E[Z_h^m]}{h^m} \;=\; \E|T^X_h - T^{X'}_h|^m \;=\; C^m_{XX} \qquad \text{for every } h.
\]
The pointwise estimate~\eqref{eq:sandwich} gives $Z_h \leq |X_h - X_h'| \leq Z_h + R$, and the application of Lemma~\ref{lem:squeeze-moment} with $c = R$ gives
\[
    \frac{\E|X_h - X_h'|^n}{h^n} \;\xrightarrow[h \to \infty]{}\; C^n_{XX}.
\]

\medskip
\noindent\emph{The pair of boundary points.} Put $Z_h := |\pi_u(Y_h - Y_h')|$. By Proposition~\ref{prop:boundary-proj} the pair $(T^Y_h, T^{Y'}_h)$ converges weakly to $\mu_Y \otimes \mu_Y$ (by independence and compactness of the supports), and the convergence strengthens to the convergence of the polynomial moments:
\[
    \frac{\E[Z_h^m]}{h^m} \;=\; \E|T^Y_h - T^{Y'}_h|^m \;\xrightarrow[h \to \infty]{}\; C^m_{YY}.
\]
Applying Lemma~\ref{lem:squeeze-moment} with $c = R$, we obtain
\[
    \frac{\E|Y_h - Y_h'|^n}{h^n} \;\xrightarrow[h \to \infty]{}\; C^n_{YY}.
\]

\medskip
\noindent\emph{The mixed pair.} The case $(X_h, Y_h)$ is treated analogously, via the joint convergence of the pair $(T^X_h, T^Y_h)$ to $\mu_X \otimes \mu_Y$:
\[
    \frac{\E|X_h - Y_h|^n}{h^n} \;\xrightarrow[h \to \infty]{}\; C^n_{XY}.
\]

\medskip

This completes part~(ii) of Theorem~\ref{thm:main} and, together with item~(i), its proof.

\subsection{An example of a body with property \propP{} in \texorpdfstring{$\R^3$}{R3}}
\label{sec:body-r3}

In this subsection an explicit centrally symmetric convex body $K_0 \subset \R^3$ with property~\propP{} and distinguished direction $u = e_1$ is constructed. Together with Theorem~\ref{thm:main} and Corollary~\ref{cor:counterexamples} this yields a counterexample to Conjecture~\ref{hyp:zt} in $\R^3$.

\paragraph*{The body.} Fix $a \in (0, 1)$. In $\R^2$ consider
\[
    Q   \;=\; [-1, 1]^2, \qquad
    H_a \;=\; \mathrm{conv}\bigl((\pm 1,\, \pm(1-a)),\; (\pm(1-a),\, \pm 1)\bigr).
\]
The octagon $H_a$ is obtained from $Q$ by cutting off each of the four corners by an isosceles right triangle with leg~$a$, and $H_a \subset Q$. Place $Q$ in the hyperplane $\{x_1 = 0\} \subset \R^3$, and $H_a$ and $-H_a$ in $\{x_1 = +1\}$ and $\{x_1 = -1\}$ respectively, and put
\[
    K_0 \;:=\; \mathrm{conv}\bigl(\, Q \,\cup\, H_a \,\cup\, (-H_a) \,\bigr).
\]
Since $Q$ and $H_a$ are centrally symmetric and $-H_a = H_a$, the body $K_0$ is symmetric with respect to the origin.

\begin{figure}[H]
    \centering
    \includegraphics[width=0.55\textwidth]{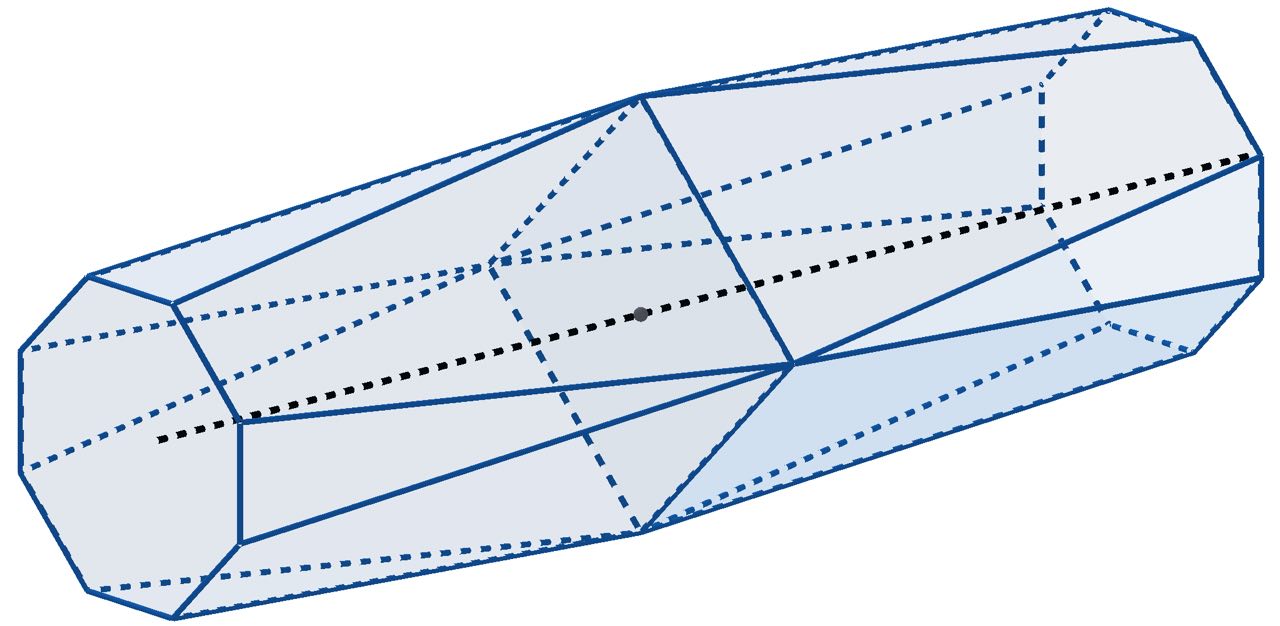}
    \caption{The body $K_0 \subset \R^3$: the convex hull of the square $Q$ in the equatorial section $\{x_1 = 0\}$ and of the two octagons $\pm H_a$ in the polar sections $\{x_1 = \pm 1\}$.}
    \label{fig:body-R3}
\end{figure}

\paragraph*{The sections.} Take $u = e_1$ and denote by $A_0(t)$, $P_0(t)$ the area and the perimeter of the section $K_0(t) = K_0 \cap \{x_1 = t\}$. By central symmetry it suffices to describe $K_0(t)$ for $t \in [0, 1]$.

\begin{statement}[Area and perimeter of a section]\label{prop:body-formulas}
For each $t \in [0, 1]$ the section $K_0(t)$ coincides with the octagon $H_{at}$. In particular,
\[
    A_0(t) \;=\; 4 - 2 a^2 t^2, \qquad
    P_0(t) \;=\; 8 - 4(2 - \sqrt{2})\, a\, t,
\]
and on $[-1, 1]$ the functions are extended by evenness: $A_0(t) = A_0(|t|)$, $P_0(t) = P_0(|t|)$.
\end{statement}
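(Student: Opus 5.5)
The plan is to identify the section $K_0(t)$ explicitly as a convex combination of the equatorial and polar sections, and then read off area and perimeter from the geometry of the octagon $H_b$.

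\textbf{Step 1: the section is a Minkowski combination.} For $t \in [0,1]$ I claim
\[
K_0(t) = (1-t)\,Q + t\,H_a .
\]
For the inclusion $\supseteq$, note that a point $(1-t)q + t h$ with $q \in Q$ (at $x_1 = 0$) and $h \in H_a$ (at $x_1 = 1$) lies on a segment joining two points of $K_0$, and it has $x_1 = t$.

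For $\subseteq$, write any point of $K_0$ as a convex combination of points of $Q$, $H_a$ and $-H_a$. By convexity of these three sets it equals $\lambda q + \mu h + \nu h'$ with $q \in Q$, $h \in H_a$, $h' \in -H_a = H_a$. Its first coordinate is $\mu - \nu = t$. Rewrite the $\nu$-part: since $-H_a \subset Q$, we have $\nu h' \in \nu Q$. Then
\[
\lambda q + \nu h' + \mu h \in (\lambda+\nu)Q + \mu H_a \subset (\lambda + 2\nu)Q + (\mu-\nu) H_a = (1-t)Q + t H_a .
\]
Here the second inclusion uses $\nu H_a \subset \nu Q$, applied to the part $\mu h = (\mu-\nu)h + \nu h$. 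This bookkeeping step is the main point requiring care; the rest is computation.

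\textbf{Step 2: compute the Minkowski combination.} Compute $(1-t)Q + tH_a$ directly via support functions. In the coordinate directions $\pm e_2, \pm e_3$ both $Q$ and $H_a$ have support value $1$, so the combination does too. In the diagonal directions $(\pm1,\pm1)/\sqrt2$ the support values are $2/\sqrt2$ for $Q$ and $(2-a)/\sqrt2$ for $H_a$, hence $(2-at)/\sqrt2$ for the combination. These two families of directions are the only edge normals of either summand, and support functions add under Minkowski combination. Therefore the combination is the octagon cut out by $|x_i| \le 1$ and $|x_2| + |x_3| \le 2 - at$, which is exactly $H_{at}$.

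\textbf{Step 3: area and perimeter of $H_b$ with $b = at$.} For the area, remove four right triangles with legs $b$ from the square of area $4$:
\[
A_0(t) = 4 - 4\cdot\tfrac{b^2}{2} = 4 - 2a^2t^2 .
\]
For the perimeter, the four axis-parallel sides each have length $2 - 2b$, and the four diagonal sides each have length $b\sqrt2$. Hence
\[
P_0(t) = 8 - 8b + 4\sqrt2\, b = 8 - 4(2-\sqrt2)\,a\,t .
\]
The formulas for $t \in [-1,0]$ follow by central symmetry, since $K_0(-t) = -K_0(t)$.
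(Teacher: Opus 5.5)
Your proof is correct and follows the same idea as the paper: identify $K_0(t)$ as the linear interpolation between $Q$ and $H_a$ (a corner cut of leg $at$), then compute the area and perimeter of $H_s$ exactly as in your Step 3. The paper only asserts that the cut depends linearly on $t$, so your Step 1 and the support-function computation in Step 2 supply the justification the paper leaves implicit; the key point there is using $H_a \subset Q$ to absorb the contribution of $-H_a$ when $t \geq 0$.
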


\begin{proof}
At each corner of the square $Q$ the cut of the section $K_0(t)$ depends linearly on the level $t$: at $t = 0$ there is no cut (the square $Q$ itself), and at $t = \pm 1$ the cut equals $a$ (the octagons $\pm H_a$). Therefore at the level $t$ the cut is an isosceles right triangle with leg $at$, and $K_0(t) = H_{at}$.

The octagon $H_s$ has four sides lying on the sides of the square, of length $2(1 - s)$, and four diagonal sides of length $s\sqrt{2}$, whence
\[
    |H_s| \;=\; |Q| - 4 \cdot \tfrac{1}{2} s^2 \;=\; 4 - 2 s^2, \qquad
    P(H_s) \;=\; 4 \cdot 2(1 - s) + 4 \cdot s\sqrt{2} \;=\; 8 - 4(2 - \sqrt{2})\, s.
\]
Substituting $s = at$, we obtain the stated formulas.
\end{proof}

\paragraph*{Verification of property \propP.}

\begin{statement}\label{prop:body-has-P}
Let $a_*$ be the smaller positive root of the equation
\begin{equation}\label{eq:a-star}
    (2 - \sqrt{2})\, a^2 \,-\, 4 a \,+\, 2(2 - \sqrt{2}) \;=\; 0,
\end{equation}
explicitly $a_* = \dfrac{2 - 2\sqrt{2(\sqrt{2} - 1)}}{2 - \sqrt{2}} \approx 0.307$. For any $a \in (0, a_*)$ the body $K_0$ has property~\propP{} with respect to $u = e_1$.
\end{statement}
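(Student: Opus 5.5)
Using Proposition~\ref{prop:body-formulas}, the statement reduces to a one-variable calculus check. We need $\rho(t)=A_0(t)/P_0(t)$ to be strictly increasing in $|t|$. By evenness it is enough to work on $t\in[0,1]$, and with $s=at\in[0,a]$ this means
\[
    r(s)=\frac{4-2s^2}{8-4(2-\sqrt2)s}=\frac{2-s^2}{4-2(2-\sqrt2)s}
\]
must be strictly increasing on $[0,a]$. The denominator is positive there, since $2(2-\sqrt2)a<4$ for $a<1$. So the task is to find where $r'(s)>0$.

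The plan is to differentiate and look only at the sign of the numerator. Write $c=2-\sqrt2$. Then
\[
    N(s)=(-2s)(4-2cs)-(2-s^2)(-2c)=-8s+4cs^2+4c-2cs^2=2\bigl(cs^2-4s+2c\bigr).
\]
The bracket $q(s)=cs^2-4s+2c$ is exactly the quadratic in \eqref{eq:a-star}, with $s$ in place of $a$. It is an upward parabola with $q(0)=2c>0$ and discriminant $16-8c^2>0$, so it has two positive roots. The smaller one is $a_*=\bigl(2-\sqrt{4-2c^2}\bigr)/c$. Since $c^2=6-4\sqrt2$, we get $4-2c^2=8\sqrt2-8=4\cdot2(\sqrt2-1)$, which matches the stated closed form. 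Numerically $a_*\approx 0.307$.

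It follows that $q>0$ on $[0,a_*)$. For $a<a_*$ this gives $r'(s)>0$ on all of $[0,a]$, so $r$ is strictly increasing there. Hence $\rho(t)=r(a|t|)$ is strictly increasing in $|t|$ on $[-1,1]$, which is property~\propP{} for $u=e_1$. The only point needing care is that the direction is right: the numerator has the same sign as $q$, positive near $0$, so $r$ increases rather than decreases. Also note that at $s=0$ the derivative is positive, so there is no degeneracy at the centre. There is no real obstacle beyond this sign bookkeeping and checking the explicit root formula.
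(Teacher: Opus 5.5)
Your argument is correct and is essentially the paper's proof: both differentiate $A_0/P_0$, reduce the sign of the derivative to the quadratic $c s^2-4s+2c$ with $c=2-\sqrt2$, and compare it with \eqref{eq:a-star}. The only difference is that your substitution $s=at$ makes this quadratic coincide literally with \eqref{eq:a-star}, whereas the paper keeps $g(t)=q(at)$ and argues that its vertex lies beyond $t=1$, so its minimum on $[0,1]$ is $g(1)$. One small point: $q(0)>0$ and a positive discriminant alone do not force both roots to be positive, which you need for $q>0$ on $[0,a_*)$. This follows at once from the product of the roots being $2$ and their sum being $4/c$.
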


\begin{proof}
Let us show that $t \mapsto A_0(t)/P_0(t)$ is strictly increasing on $(0, 1)$. On this interval $P_0(t) > 0$, and the sign of the derivative of the ratio coincides with the sign of
\[
    A_0'(t)\, P_0(t) \,-\, A_0(t)\, P_0'(t).
\]
Substituting $A_0'(t) = -4 a^2 t$ and $P_0'(t) = -4(2 - \sqrt{2})\, a$ together with the explicit formulas for $A_0, P_0$, a direct computation shows that this expression equals $8 a \cdot g(t)$, where
\[
    g(t) \;:=\; (2 - \sqrt{2})\, a^2 t^2 \,-\, 4 a t \,+\, 2(2 - \sqrt{2}).
\]
The parabola $g$ opens upwards with vertex at $t_v = 2 / \bigl(a (2 - \sqrt{2})\bigr) > 1$ for all $a \in (0, 1)$, so $g$ is strictly decreasing on $[0, 1]$, and its minimum there is attained at $t = 1$:
\[
    g(1) \;=\; (2 - \sqrt{2})\, a^2 \,-\, 4 a \,+\, 2(2 - \sqrt{2}).
\]
The equation $g(1) = 0$ is exactly~\eqref{eq:a-star}; its discriminant equals $16 - 8(2 - \sqrt{2})^2 = 32(\sqrt{2} - 1) > 0$, and the smaller positive root is $a_*$. For $a \in (0, a_*)$ we have $g(1) > 0$, and since $g$ is decreasing on $[0, 1]$, we have $g > 0$ on the whole segment. Consequently, $A_0/P_0$ is strictly increasing on $(0, 1)$, which is exactly property~\propP.
\end{proof}

Thereby the existence of a centrally symmetric convex body with property~\propP{} in $\R^3$ is established. Combined with Theorem~\ref{thm:main} and Corollary~\ref{cor:counterexamples}, this gives an explicit counterexample to Conjecture~\ref{hyp:zt} in dimension $d = 3$.

\subsection{The method of raising the dimension}
\label{sec:dim-raise}

In this subsection we transfer the strict ordering of the three normalised moments $\E|X-X'|^n$, $\E|X-Y|^n$, $\E|Y-Y'|^n$ from dimension $m$ to dimension $m+1$ via the Cartesian product of the body with a segment. The boundary of the product splits into \emph{caps} and the \emph{lateral part}, and the three moments of the new family are expressed through the moments of the original one by conditioning on the type of the point.

\paragraph*{The structure of the Cartesian product.} Let $K \subset \R^m$ ($m \geq 3$) be a centrally symmetric convex body, $L > 0$. Put
\[
    \tilde K \;:=\; K \times [-L, L] \;\subset\; \R^{m+1}.
\]
Let $X, X'$ (respectively $Y, Y'$) be independent uniformly distributed interior (respectively boundary) points of $K$, and let $\tilde X, \tilde X'$ (respectively $\tilde Y, \tilde Y'$) be their analogues for $\tilde K$. The boundary of the product splits into two parts:
\[
    \partial \tilde K
    \;=\; \underbrace{K \times \{\pm L\}}_{\text{caps, area } 2|K|}
    \;\sqcup\;
    \underbrace{\partial K \times [-L, L]}_{\text{side, area } 2L \cdot |\partial K|}.
\]
A random boundary point $\tilde Y$ falls on the caps with probability
\[
    q \;:=\; \frac{|K|}{|K| + L \cdot |\partial K|},
\]
and on the lateral part with probability $1 - q$. Conditionally on $\{\text{cap}\}$ we have $\tilde Y = (B^{\mathrm{c}}, \pm L)$, where $B^{\mathrm{c}}$ is uniformly distributed \emph{inside} $K$ (the first coordinate is of interior type), and $\pm L$ are equally likely. Conditionally on $\{\text{side}\}$ we have $\tilde Y = (B^{\mathrm{l}}, V)$, where $B^{\mathrm{l}}$ is uniform \emph{on} $\partial K$ (the first coordinate is of boundary type), and $V$ is uniform on $[-L, L]$. An interior point has the form $\tilde X = (X, U)$, where $X$ is an interior point of $K$ and $U$ is uniform on $[-L, L]$, everything being independent.

We shall need an elementary chain of inequalities: for any $\tilde p_i = (x_i, u_i) \in \tilde K$, from $\sqrt{a^2 + b^2} \leq |a| + |b|$ it follows that
\begin{equation}\label{eq:sandwich-product}
    |x_1 - x_2| \;\leq\; |\tilde p_1 - \tilde p_2| \;\leq\; |x_1 - x_2| + 2L.
\end{equation}

\paragraph*{Transfer of the limiting family.}

\begin{statement}\label{prop:method-A}
Let $\{K_m(h)\}_{h > 0}$ be a family of centrally symmetric convex bodies in $\R^m$ ($m \geq 3$). Suppose that for each $n \geq 1$ there exist constants $C^n_{XX}, C^n_{XY}, C^n_{YY}$ with the strict ordering $C^n_{XX} > C^n_{XY} > C^n_{YY}$ such that
\begin{equation}\label{eq:family-hyp}
    \frac{\E|X-X'|^n}{h^n} \to C^n_{XX}, \quad
    \frac{\E|X-Y|^n}{h^n} \to C^n_{XY}, \quad
    \frac{\E|Y-Y'|^n}{h^n} \to C^n_{YY}
\end{equation}
as $h \to \infty$, and suppose in addition that
\[
    q(h) \;:=\; \frac{|K_m(h)|}{|K_m(h)| + |\partial K_m(h)|}
    \;\xrightarrow[h \to \infty]{}\; q^* \in (0, 1).
\]
Then the family $\tilde K_{m+1}(h) := K_m(h) \times [-1, 1]$ satisfies the same conditions in dimension $m + 1$. Namely: its three normalised moments converge to
\begin{equation}\label{eq:method-A-recursion}
\begin{aligned}
    \tilde C^n_{XX} \;&=\; C^n_{XX}, \\
    \tilde C^n_{XY} \;&=\; q^*\, C^n_{XX} \,+\, (1 - q^*)\, C^n_{XY}, \\
    \tilde C^n_{YY} \;&=\; (q^*)^2\, C^n_{XX} \,+\, 2 q^* (1 - q^*)\, C^n_{XY} \,+\, (1 - q^*)^2\, C^n_{YY},
\end{aligned}
\end{equation}
with the strict ordering $\tilde C^n_{XX} > \tilde C^n_{XY} > \tilde C^n_{YY}$, and the cap probability $\tilde q(h)$ for $\tilde K_{m+1}(h)$ has a limit in $(0, 1)$.
\end{statement}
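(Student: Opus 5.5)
The proof conditions on where the boundary points land and transfers the limits through the sandwich bound~\eqref{eq:sandwich-product} with $L = 1$.

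\emph{Decomposition.} For $\tilde K = K_m(h)\times[-1,1]$, the interior point is $\tilde X = (X, U)$. The boundary point $\tilde Y$ is $(B^{\mathrm c}, \pm 1)$ with probability $q(h)$, where $B^{\mathrm c}$ is an interior point of $K_m(h)$, and $(B^{\mathrm l}, V)$ with probability $1-q(h)$, where $B^{\mathrm l}$ is a boundary point. With $L = 1$ the cap probability is exactly the $q(h)$ of the hypothesis. By the law of total expectation,
\begin{align*}
\E|\tilde X - \tilde Y|^n &= q\,\E|\tilde X - (X',\pm1)|^n + (1-q)\,\E|\tilde X - (Y,V)|^n,\\
\E|\tilde Y - \tilde Y'|^n &= q^2 E_{cc} + 2q(1-q) E_{cl} + (1-q)^2 E_{ll},
\end{align*}
where each conditional term is a moment of the distance between two points of $\tilde K$ whose first coordinates are independent interior or boundary points of $K_m(h)$. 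By~\eqref{eq:sandwich-product}, each such distance is squeezed between $|x_1 - x_2|$ and $|x_1-x_2| + 2$, with $x_1, x_2$ of the corresponding types.

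\emph{Passage to the limit.} Lemma~\ref{lem:squeeze-moment} with $c = 2$ applies: its hypothesis is the convergence of $\E Z_h^m/h^m$ for \emph{all} $m>0$, and hypothesis~\eqref{eq:family-hyp} supplies this for $m \ge 1$. For $0<m<1$ I would either extend the hypothesis or observe that the induction in the lemma only needs exponents $m-1$ that are $o(h^m)$. Boundedness of $\E Z_h^{m-1}/h^{m-1}$ follows from Lyapunov/Jensen and the $n=1$ case, so I would restate the lemma's step in that form. This is the point I expect to be the main technical obstacle.

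Each conditional moment divided by $h^n$ then converges to $C^n_{XX}$, $C^n_{XY}$ or $C^n_{YY}$ according to the types. Combined with $q(h)\to q^*$, this gives~\eqref{eq:method-A-recursion}. The interior case $\tilde C_{XX} = C_{XX}$ is immediate.

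\emph{Strict ordering.} The recursion is a mixing: with weights $q^*, 1-q^*$,
\[
\tilde C_{XX} - \tilde C_{XY} = (1-q^*)(C_{XX}-C_{XY}) > 0,
\]
\[
\tilde C_{XY} - \tilde C_{YY} = (1-q^*)\bigl[q^*(C_{XX}-C_{XY}) + (1-q^*)(C_{XY}-C_{YY})\bigr] > 0,
\]
because $q^* \in (0,1)$.

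\emph{New cap probability.} It remains to show that $\tilde q(h) = |\tilde K|/(|\tilde K| + |\partial\tilde K|)$ has a limit in $(0,1)$. We have $|\tilde K| = 2|K_m(h)|$ and $|\partial \tilde K| = 2|K_m(h)| + 2|\partial K_m(h)|$. Writing $r = |K_m(h)|/|\partial K_m(h)| \to q^*/(1-q^*) =: r^*$ (from the convergence of $q$), we get
\[
\tilde q = \frac{2r}{4r+2} \to \frac{r^*}{2r^*+1} = \frac{q^*}{1+q^*} \in (0,1),
\]
which closes the induction.
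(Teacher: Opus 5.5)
Your proposal is correct and follows the paper's proof essentially step for step. It uses the same cap/side conditioning, the sandwich~\eqref{eq:sandwich-product} with $L=1$ together with Lemma~\ref{lem:squeeze-moment} at $c=2$, the same mixing computation for the strict ordering, and the same limit $q^*/(1+q^*)$ for the new cap probability. You are also right that the lemma's hypothesis for exponents $0<m<1$ is not supplied by~\eqref{eq:family-hyp}, a point the paper passes over silently, and your Jensen-plus-$(n=1)$ fix handles it; alternatively, Minkowski's inequality $\|Z_h\|_{L^n}\le\|Z_h+2\|_{L^n}\le\|Z_h\|_{L^n}+2$ gives the transfer directly for every $n\ge1$ without the lemma.
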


\begin{proof}
Take $L = 1$ and denote $K = K_m(h)$, $\tilde K = K \times [-1, 1]$. The last coordinate contributes to~\eqref{eq:sandwich-product} a term bounded by a constant, so applying Lemma~\ref{lem:squeeze-moment} (the moment transfer lemma) with $c = 2$ to $Z_h = |x_1 - x_2|$ gives: if $\E|x_1 - x_2|^n / h^n$ converges to some limit, then $\E|\tilde p_1 - \tilde p_2|^n / h^n$ converges to the same limit.

\emph{The interior pair.} The first coordinates of $(\tilde X, \tilde X')$ coincide with $(X, X')$, so $\E|\tilde X - \tilde X'|^n / h^n \to C^n_{XX}$.

\emph{The mixed pair.} Conditioning on the type of $\tilde Y$ gives
\[
    \frac{\E|\tilde X - \tilde Y|^n}{h^n}
    \;=\; q(h) \cdot \frac{\E|\tilde X - (B^{\mathrm{c}}, \pm 1)|^n}{h^n}
      \;+\; (1 - q(h)) \cdot \frac{\E|\tilde X - (B^{\mathrm{l}}, V)|^n}{h^n}.
\]
The first coordinate of $(B^{\mathrm{c}}, \pm 1)$ is uniform \emph{inside} $K$, so the first summand converges to $C^n_{XX}$; for the lateral condition it converges to $C^n_{XY}$. Taking into account $q(h) \to q^*$:
\[
    \frac{\E|\tilde X - \tilde Y|^n}{h^n}
    \;\xrightarrow[h \to \infty]{}\;
    q^* C^n_{XX} + (1 - q^*) C^n_{XY}.
\]

\emph{The pair of boundary points.} Conditioning on the types of both $\tilde Y$ and $\tilde Y'$ gives three configurations (cap--cap, cap--side, side--side) with weights $q(h)^2$, $2 q(h)(1 - q(h))$, $(1 - q(h))^2$; the marginals of the first coordinate for them are interior--interior, interior--boundary, boundary--boundary. An analogous passage gives
\[
    \frac{\E|\tilde Y - \tilde Y'|^n}{h^n}
    \;\xrightarrow[h \to \infty]{}\;
    (q^*)^2 C^n_{XX} + 2 q^* (1 - q^*) C^n_{XY} + (1 - q^*)^2 C^n_{YY}.
\]

\emph{The cap probability.} The volume and the surface area of $\tilde K_{m+1}(h) = K_m(h) \times [-1, 1]$ are expressed through $|K_m(h)|$ and $|\partial K_m(h)|$:
\[
    |\tilde K_{m+1}(h)| \;=\; 2 |K_m(h)|, \qquad
    |\partial \tilde K_{m+1}(h)| \;=\; 2 |K_m(h)| + 2 |\partial K_m(h)|.
\]
The condition $q(h) \to q^* \in (0, 1)$ is equivalent to the ratio $|K_m(h)| / |\partial K_m(h)|$ having a finite positive limit $q^* / (1 - q^*)$. Substituting into the expression for the cap probability,
\[
    \tilde q(h) \;=\; \frac{|\tilde K_{m+1}(h)|}{|\tilde K_{m+1}(h)| + |\partial \tilde K_{m+1}(h)|}
    \;=\; \frac{|K_m(h)|}{2 |K_m(h)| + |\partial K_m(h)|}
    \;\xrightarrow[h \to \infty]{}\; \frac{q^*}{1 + q^*} \;\in\; (0, 1).
\]

\emph{The strict ordering.} From~\eqref{eq:method-A-recursion}, by a direct computation,
\[
    \tilde C^n_{XX} - \tilde C^n_{XY}
    = (1 - q^*)(C^n_{XX} - C^n_{XY}) > 0,
\]
\[
    \tilde C^n_{XY} - \tilde C^n_{YY}
    = q^* (1 - q^*)(C^n_{XX} - C^n_{XY}) + (1 - q^*)^2 (C^n_{XY} - C^n_{YY}) > 0,
\]
by the input ordering and $q^* \in (0, 1)$.
\end{proof}

Two observations follow from Proposition~\ref{prop:method-A}.

\smallskip
\emph{(i) The method iterates.} The output family $\{\tilde K_{m+1}(h)\}$ satisfies in dimension $m+1$ all three input conditions of Proposition~\ref{prop:method-A}: the convergence of the three normalised moments, the strict ordering of the limits and the convergence of the cap probability in $(0, 1)$. Therefore the proposition is applicable to it again --- and so inductively for every dimension $m + k$, $k \geq 1$.

\smallskip
\emph{(ii) The family in $\R^3$ satisfies the conditions.} The family $\{K_0(h)\}_{h > 0}$ from Subsection~\ref{sec:body-r3} satisfies all three conditions of Proposition~\ref{prop:method-A} for $m = 3$: the convergence~\eqref{eq:family-hyp} is part~(ii) of Theorem~\ref{thm:main}, the strict ordering is its part~(i), and the cap probability $q(h) = |K_0(h)| / (|K_0(h)| + |\partial K_0(h)|)$ converges to a limit in $(0, 1)$, since both $|K_0(h)|$ and $|\partial K_0(h)|$ grow linearly in $h$ (an anisotropic stretching of a fixed base body).

\paragraph*{Counterexamples in every dimension.}

\begin{corollary}\label{cor:counterexamples-alld}
For any $d \geq 3$ and $n \geq 1$ there is a centrally symmetric convex body $K \subset \R^d$ such that
\[
    \E|X - X'|^n \;>\; \E|X - Y|^n \;>\; \E|Y - Y'|^n.
\]
In particular, Conjecture~\ref{hyp:zt} fails in every such dimension.
\end{corollary}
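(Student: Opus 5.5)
The plan is an induction on the dimension, started from the three-dimensional family built earlier and pushed upward with Proposition~\ref{prop:method-A}. Fix $n \geq 1$. For $d = 3$, take $a \in (0, a_*)$. By Proposition~\ref{prop:body-has-P} the body $K_0$ of Subsection~\ref{sec:body-r3} then has property~\propP{} with respect to $u = e_1$. Theorem~\ref{thm:main} applied to the stretched family $K_0(h)$ gives two things: the three normalised moments converge to $C^n_{XX}, C^n_{XY}, C^n_{YY}$, and these limits satisfy $C^n_{XX} > C^n_{XY} > C^n_{YY}$. Corollary~\ref{cor:counterexamples} then produces $H_n$ such that for $h > H_n$ the body $K_0(h)$ has the strict chain $\E|X-X'|^n > \E|X-Y|^n > \E|Y-Y'|^n$. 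Each $K_0(h)$ is a linear image of a centrally symmetric body, so it is itself a centrally symmetric convex body.

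For $d > 3$ I would induct using observation~(ii) after Proposition~\ref{prop:method-A}. The family $\{K_0(h)\}$ satisfies the hypotheses of that proposition with $m = 3$. The one point to check beyond Theorem~\ref{thm:main} is that the cap probability $q(h)$ tends to a limit in $(0,1)$:
\begin{itemize}
\item $|K_0(h)| = h V_0$;
\item $|\partial K_0(h)| = h S_0 + O(1)$, from the Jacobian computation preceding Proposition~\ref{prop:boundary-proj};
\item hence $q(h) \to V_0/(V_0 + S_0) \in (0,1)$, since $V_0, S_0 > 0$.
\end{itemize}
Applying Proposition~\ref{prop:method-A} $d-3$ times, with observation~(i) guaranteeing that each output family again meets all the hypotheses, yields a family $\tilde K_d(h) = K_0(h) \times [-1,1]^{d-3} \subset \R^d$. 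Its normalised moments converge to limits $\tilde C^n_{XX} > \tilde C^n_{XY} > \tilde C^n_{YY}$.

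To finish, I would repeat the argument of Corollary~\ref{cor:counterexamples}. Since the three ratios $\E|\cdot|^n/h^n$ converge to strictly ordered limits, the ratios themselves are strictly ordered for all sufficiently large $h$. Multiplying by $h^n > 0$ gives the required strict inequalities for a single body $\tilde K_d(h)$. That body is a product of centrally symmetric convex bodies, so it is a centrally symmetric convex body with nonempty interior.

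Nothing here is a real obstacle, since the work is done in the cited results. The one point needing care is that the constants in Proposition~\ref{prop:method-A} are stated for each $n$ together with the cap-probability condition, and that condition does not depend on $n$. So the induction can be run with $n$ fixed throughout, and the threshold $H_n$ is chosen only at the very end.
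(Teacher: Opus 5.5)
Your proposal is correct and follows the paper's proof essentially step for step. The case $d=3$ comes from Proposition~\ref{prop:body-has-P}, Theorem~\ref{thm:main} and Corollary~\ref{cor:counterexamples}; then come $d-3$ applications of Proposition~\ref{prop:method-A} to $\{K_0(h)\}$ and a final choice of large $h$. Your computation $q(h)\to V_0/(V_0+S_0)\in(0,1)$ simply spells out the paper's observation~(ii).
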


\begin{proof}
The base case $d = 3$ is provided by Proposition~\ref{prop:body-has-P} combined with Theorem~\ref{thm:main} and Corollary~\ref{cor:counterexamples}. For $d \geq 4$ the family $\{K_0(h)\}$ from Subsection~\ref{sec:body-r3} satisfies the conditions of Proposition~\ref{prop:method-A} (observation (ii) above). Applying it $d - 3$ times, we obtain a family $\{K(h)\}$ in $\R^d$ with a strict ordering of the limits of the normalised moments. Applying Corollary~\ref{cor:counterexamples} to this family for $h$ large enough gives the required body.
\end{proof}

\section{The general case of higher-order moments}
This section will be devoted to the proof of the results obtained for planar bodies without additional restrictions.
\subsection{Preparation}

In this subsection we shall make active use of the formulas obtained by T.~Moseeva~\cite{Moseeva2022} on the basis of the Crofton formulas and the Blaschke--Petkantschin formula. The most general form of the formula:

\begin{theorem}[T.~Moseeva, 2022]
Let $K$ be a convex body in $\R^d$ with smooth boundary, let $h(x_0, \ldots, x_l)$ be a continuous function, $l \leq d$. Then for all $k \in \{0, \ldots, l+1\}$ we have
\begin{align*}
    &\int_{(\partial K)^k} \int_{K^{l-k+1}}
    h(x_0, \ldots, x_l)\, dx_0 \ldots dx_{l-k}\, \sigma(dx_{l-k+1}) \ldots \sigma(dx_l) \\
    &= (l!)^{d-l} b_{d,l}
    \int_{A_{d,l}} \int_{(E \cap \partial K)^k} \int_{(E \cap K)^{l-k+1}}
    h(x_0, \ldots, x_l)\, [x_0, \ldots, x_l]^{d-l} \\
    &\qquad \times \prod_{j=l-k+1}^l \| P_E(n_K(x_j)) \|^{-1}
    \lambda_E(dx_0) \ldots \lambda_E(dx_{l-k})\,
    \sigma_{E \cap \partial K}(dx_{l-k+1}) \ldots \sigma_{E \cap \partial K}(dx_l)\,
    \mu_{d,l}(dE).
\end{align*}
Here $b_{d,l} = \frac{\omega_{d-l+1} \cdots \omega_d}{\omega_1 \cdots \omega_l}$, $\omega_k := |\S^{k-1}|$, $n_K(x)$ denotes the outer normal to $K$ at the point $x$, and $P_E$ is the orthogonal projection onto the plane $E$.
\end{theorem}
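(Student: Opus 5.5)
This formula is the Blaschke--Petkantschin change of variables for affine $l$-planes, extended to points lying on the boundary. I would derive it from the classical affine Blaschke--Petkantschin formula, and handle the boundary points by a thin-shell approximation.

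\textbf{Step 1: the case $k=0$.} Here every point lies in $K$. The affine Blaschke--Petkantschin formula for $l+1$ points in $\R^d$ states that
\[
\int_{(\R^d)^{l+1}} h\,dx_0\ldots dx_l=(l!)^{d-l}b_{d,l}\int_{A_{d,l}}\int_{E^{l+1}} h\,[x_0,\ldots,x_l]^{d-l}\,\lambda_E(dx_0)\ldots\lambda_E(dx_l)\,\mu_{d,l}(dE),
\]
where $[x_0,\ldots,x_l]$ is the $l$-volume of the simplex. I would apply it to $h\cdot\Ind_{K^{l+1}}$. This gives the case $k=0$ directly, since $\Ind_K$ restricted to $E$ is $\Ind_{E\cap K}$. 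The constant is the standard one: the factor $(l!)^{d-l}$ appears because we use simplex volume rather than parallelotope volume. The proof of this step is the usual one: parametrize $E$ by a linear subspace $L\in G(d,l)$ plus a translate $y\in L^\perp$, write $x_i=y+z_i$ with $z_i\in L$, and use the linear Blaschke--Petkantschin Jacobian $\nabla_l^{d-l}$ together with translation invariance.

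\textbf{Step 2: one boundary point, via a parallel shell.} For $\varepsilon>0$ set $K_\varepsilon=K+\varepsilon B^d$. Because $\partial K$ is smooth, the Steiner/tube formula gives, for continuous $\varphi$,
\[
\varepsilon^{-1}\int_{K_\varepsilon\setminus K}\varphi\,dx\;\longrightarrow\;\int_{\partial K}\varphi\,d\sigma .
\]
I would apply Step 1 with the indicator of $K_\varepsilon\setminus K$ in the boundary slots. The left-hand side then becomes $\varepsilon^{-k}$ times a shell integral, which converges to the left-hand side of the theorem. On the right-hand side, inside a fixed plane $E$, the set $E\cap(K_\varepsilon\setminus K)$ is a shell around the curve or hypersurface $E\cap\partial K$. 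Its width, measured inside $E$, at a point $x$ is $\varepsilon/\|P_E n_K(x)\|+o(\varepsilon)$. The reason is that the distance to $\partial K$ in $\R^d$ is $\varepsilon$, while the in-plane gradient of the distance function is $P_E n_K$. Hence
\[
\varepsilon^{-1}\int_{E\cap(K_\varepsilon\setminus K)}\psi\,d\lambda_E\;\to\;\int_{E\cap\partial K}\psi\,\|P_En_K\|^{-1}\,d\sigma_{E\cap\partial K}.
\]
This is exactly where the factor $\prod\|P_E(n_K(x_j))\|^{-1}$ comes from. Applying the limit in each of the $k$ boundary slots gives the formula for general $k$.

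\textbf{Step 3: passing the limit under $\int_{A_{d,l}}$ (the main obstacle).} The difficulty sits at the planes $E$ that are nearly tangent to $\partial K$, where $\|P_En_K\|\to 0$. There the in-plane shell width blows up and pointwise domination fails. I would deal with this in two ways.

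First, I would avoid the limit interchange altogether by proving the identity as an equality of measures. Apply the coarea formula on $\partial K$ to the map sending $x\in\partial K$ to its position within $E$: the Jacobian of the restriction of $\sigma$ to the slices $E\cap\partial K$ is $\|P_En_K\|$. Combined with the linear Blaschke--Petkantschin decomposition, this produces the weight $\|P_En_K\|^{-1}$ exactly, for almost every $E$.

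Alternatively, if I stay with the shell argument, I would use monotone convergence for nonnegative $h$ and then extend to general $h$ by splitting into $h^{+}$ and $h^{-}$. Both sides are finite: the left side is finite trivially, and the right side is finite because it equals the left side. Tangent planes form a $\mu_{d,l}$-null set by Sard's theorem applied to the smooth boundary. The boundary cases $k=l+1$, where all points lie on $\partial K$, need the same argument with no interior factor, which is fine for $l\le d-1$. For $l=d$ the integral over $A_{d,d}$ is trivial and the formula reduces to the usual product structure.
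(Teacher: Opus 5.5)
The thesis does not prove this theorem. It quotes it from Moseeva (2022) and applies it only with $d=2$, $l=1$, so your argument has to stand on its own.

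\textbf{Steps 1 and 2 are sound.} Step~1 is the standard affine Blaschke--Petkantschin formula with the right constant. The in-plane shell width $\varepsilon/\|P_E n_K\|$ is exactly where the weight comes from.

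\textbf{The gap is Step~3, which carries the real content of the theorem, and your shell/monotone-convergence route does not close it.}

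\emph{The $\varepsilon$-integrands are not monotone.} For $k=l+1$, planes that miss $K$ but meet $K_\varepsilon$ contribute positive amounts that tend to $0$. On planes cutting $K$ almost tangentially, the integrand instead rises to a much larger limit. For example, for the unit disc with $l=1$, $k=2$, $h\equiv 1$, it is of order $\varepsilon^{-1/2}$ on chords of depth $\ll\varepsilon$, with limit $4/\|P_E n_K\|$.

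\emph{Fatou's lemma gives only one inequality.} It yields $\mathrm{RHS}\le\mathrm{LHS}$ for $h\ge 0$. The claim ``the right side is finite because it equals the left side'' assumes the conclusion.

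\emph{The Sard remark concerns the wrong set.} What matters is not that tangent planes are $\mu_{d,l}$-null. It is whether boundary configurations spanning a supporting plane are null for $\sigma^{k}$, and whether shell mass piles up near them.

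\emph{Mass can in fact escape.} Suppose $\partial K$ contains a flat $(d-1)$-dimensional piece $F$; a $C^\infty$ convex boundary may have one. Take $k=l+1$ and $h$ supported near $F^{l+1}$. The left side is positive. Every such tuple spans a plane inside the supporting hyperplane of $F$, which is a $\mu_{d,l}$-null set, so the right side is $0$. Concretely, for $d=2$, $l=1$, take $h(x_0,x_1)=\phi(x_0)\phi(x_1)$ with $\phi$ a bump at the midpoint of a flat segment.

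Your Step~1 identity holds exactly for every $\varepsilon$, so this example shows two things:
\begin{itemize}
\item the limit $\varepsilon\to0$ and $\int_{A_{d,l}}$ genuinely fail to commute;
\item for $k=l+1$ the statement, with only smoothness assumed, is false, so some extra hypothesis (e.g.\ strict convexity) is needed.
\end{itemize}
A proof must use that hypothesis, either through an actual domination estimate near nearly tangent planes, or by computing the Jacobian of $(E,x_0,\dots,x_l)\mapsto(x_0,\dots,x_l)$ on the constrained manifold directly via the area formula.

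\textbf{Your coarea alternative is the right kind of exact argument, but as described it covers only $k\le 1$.}

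\emph{For a single boundary point it works.} Call the boundary point $x_b$ and write $x_i=x_b+z_i$ for the interior points. The $z_i$ range over all of $\R^d$, so the linear Blaschke--Petkantschin formula applies and produces $L\in G(d,l)$. Then use the coarea formula for $P_{L^\perp}$ restricted to $\partial K$. Its normal Jacobian is $\|P_L n_K\|$, and the degenerate set $\{x\in\partial K: L\subset n_K(x)^\perp\}$ is $\sigma$-null for almost every $L$. This slices $\sigma$ along the planes $L+y$ and gives the formula exactly, with no limit.

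\emph{For $k\ge 2$ it stalls.} The remaining boundary points satisfy $z_j\in\partial K-x_b$ and are no longer free in $\R^d$, so the linear formula cannot be applied. You would need a linear Blaschke--Petkantschin formula for points on a hypersurface. For $l=1$ this is the solid-angle formula $\sigma(dx)=r^{d-1}\,du/|\langle n_K(x),u\rangle|$. That is essentially the statement being proved, and for $k=l+1$ it is exactly where the flat-piece obstruction reappears.

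As written, the proposal establishes $k\in\{0,1\}$ and leaves $2\le k\le l+1$ open.
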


In our case $d = 2$, $l = 1$, and $k \in \{0, 1, 2\}$ will vary depending on the formula. The function $h$ we shall everywhere take to be the same:
$$h_n(x_0, x_1) = |x_0 x_1|^n \cdot \Ind\{x_0, x_1 \in K\}.$$

Let us use this formula with $k = 0$:
\begin{align*}
    \E_K |XX'|^n
    &= \cfrac{1}{|K|^2} \int_{K^2} h_n(x_0, x_1)\, dx_0\, dx_1 \\
    &= \cfrac{1}{|K|^2} \cdot b_{2,1} \int_{A_{2,1}} \int_{(E \cap K)^2} |x_0 x_1|^{n+1}\, \lambda_E(dx_0)\, \lambda_E(dx_1)\, \mu_{2,1}(dE) \\
    &= \cfrac{1}{|K|^2} \cdot b_{2,1} \int_{A_{2,1}} |E \cap K|^2 \cdot \E_{E \cap K} |x_0 x_1|^{n+1}\, \mu_{2,1}(dE) \\
    &= \cfrac{1}{|K|^2} \cdot b_{2,1} \int_{A_{2,1}} |E \cap K|^2 \cdot \cfrac{2 |E \cap K|^{n+1}}{(n+2)(n+3)}\, \mu_{2,1}(dE) \\
    &= \cfrac{1}{|K|^2} \cdot \cfrac{2 b_{2,1}}{(n+2)(n+3)} \int_{A_{2,1}} |E \cap K|^{n+3}\, \mu_{2,1}(dE) \\
    &= \cfrac{1}{|K|^2} \cdot \cfrac{\omega_2}{(n+2)(n+3)} \int_{A_{2,1}} |E \cap K|^{n+3}\, \mu_{2,1}(dE).
\end{align*}

Another expression that we shall need:
\begin{align*}
    \E_K |XY|^n
    &= \cfrac{1}{|K| \cdot |\sigma K|} \int_K \int_{\sigma K} |x_0 x_1|^n\, dx_0\, \sigma(dx_1) \\
    &= \cfrac{1}{|K| \cdot |\sigma K|} \cdot \cfrac{\omega_2}{4(n+2)} \int_{A_{2,1}} |E \cap K|^{n+2} \left(\cfrac{1}{\sin\alpha_1} + \cfrac{1}{\sin\alpha_2}\right) \mu_{2,1}(dE).
\end{align*}

It remains to express only one quantity:
\begin{align*}
    \E_K |YY'|^n
    &= \cfrac{1}{|\sigma K|^2} \int_{(\sigma K)^2} |x_0 x_1|^n\, \sigma(dx_0)\, \sigma(dx_1) \\
    &= \cfrac{1}{|\sigma K|^2} \cdot \cfrac{\omega_2}{2} \int_{A_{2,1}} |E \cap K|^{n+1} \cdot \cfrac{1}{\sin\alpha_1 \cdot \sin\alpha_2}\, \mu_{2,1}(dE).
\end{align*}

In what follows, in order to prove the results we shall perform simple manipulations with these formulas. The key, but nevertheless simple, consideration will be the following inequality:
$$|E \cap K| \leq \cfrac{|\sigma K|}{2}.$$

\subsection{Sufficient conditions for $\E_K|XX'|^n \leq \E_K|YY'|^n$}

\begin{align*}
    \E_K |XX'|^n
    &= \cfrac{1}{|K|^2} \cdot \cfrac{\omega_2}{(n+2)(n+3)} \int_{A_{2,1}} |E \cap K|^{n+3}\, \mu_{2,1}(dE) \\
    &\leq \cfrac{1}{|K|^2} \cdot \cfrac{\omega_2}{(n+2)(n+3)} \cdot \cfrac{|\sigma K|^2}{4} \int_{A_{2,1}} |E \cap K|^{n+1}\, \mu_{2,1}(dE) \\
    &\leq \cfrac{1}{|K|^2} \cdot \cfrac{\omega_2}{(n+2)(n+3)} \cdot \cfrac{|\sigma K|^2}{4} \int_{A_{2,1}} \cfrac{|E \cap K|^{n+1}}{\sin\alpha_1 \cdot \sin\alpha_2}\, \mu_{2,1}(dE) \\
    &= \cfrac{1}{|K|^2} \cdot \cfrac{\omega_2}{(n+2)(n+3)} \cdot \cfrac{|\sigma K|^2}{4} \cdot \E_K |YY'|^n \cdot |\sigma K|^2 \cdot \cfrac{2}{\omega_2} \\
    &= \E_K |YY'|^n \cdot \cfrac{|\sigma K|^4}{|K|^2} \cdot \cfrac{1}{2(n+2)(n+3)}.
\end{align*}

For the inequality $\E_K|XX'|^n \leq \E_K|YY'|^n$ it suffices that
$$\cfrac{|K|}{|\sigma K|^2} \geq \cfrac{1}{\sqrt{2(n+2)(n+3)}}.$$

\subsection{Sufficient conditions for $\E_K|XX'|^n \leq \E_K|XY|^n$}

\begin{align*}
    \E_K |XX'|^n
    &= \cfrac{1}{|K|^2} \cdot \cfrac{\omega_2}{(n+2)(n+3)} \int_{A_{2,1}} |E \cap K|^{n+3}\, \mu_{2,1}(dE) \\
    &\leq \cfrac{1}{|K|^2} \cdot \cfrac{\omega_2}{(n+2)(n+3)} \cdot \cfrac{|\sigma K|}{2} \int_{A_{2,1}} |E \cap K|^{n+2}\, \mu_{2,1}(dE) \\
    &\leq \cfrac{1}{|K|^2} \cdot \cfrac{\omega_2}{(n+2)(n+3)} \cdot \cfrac{|\sigma K|}{4} \int_{A_{2,1}} |E \cap K|^{n+2} \left(\cfrac{1}{\sin\alpha_1} + \cfrac{1}{\sin\alpha_2}\right) \mu_{2,1}(dE) \\
    &= \cfrac{1}{|K|^2} \cdot \cfrac{\omega_2}{(n+2)(n+3)} \cdot \cfrac{|\sigma K|}{4} \cdot \E_K |XY|^n \cdot |\sigma K| \cdot |K| \cdot \cfrac{4(n+2)}{\omega_2} \\
    &= \E_K |XY|^n \cdot \cfrac{|\sigma K|^2}{|K|} \cdot \cfrac{1}{n+3}.
\end{align*}

For the inequality $\E_K|XX'|^n \leq \E_K|XY|^n$ it suffices that
$$\cfrac{|K|}{|\sigma K|^2} \geq \cfrac{1}{n+3}.$$

\subsection{Sufficient conditions for $\E_K|XY|^n \leq \E_K|YY'|^n$}

\begin{align*}
    \E_K |XY|^n
    &= \cfrac{1}{|K| \cdot |\sigma K|} \cdot \cfrac{\omega_2}{4(n+2)} \int_{A_{2,1}} |E \cap K|^{n+2} \left(\cfrac{1}{\sin\alpha_1} + \cfrac{1}{\sin\alpha_2}\right) \mu_{2,1}(dE) \\
    &\leq \cfrac{1}{|K| \cdot |\sigma K|} \cdot \cfrac{\omega_2}{4(n+2)} \cdot \cfrac{|\sigma K|}{2} \int_{A_{2,1}} |E \cap K|^{n+1} \left(\cfrac{1}{\sin\alpha_1} + \cfrac{1}{\sin\alpha_2}\right) \mu_{2,1}(dE) \\
    &\leq \cfrac{1}{|K| \cdot |\sigma K|} \cdot \cfrac{\omega_2}{4(n+2)} \cdot |\sigma K| \int_{A_{2,1}} \cfrac{|E \cap K|^{n+1}}{\sin\alpha_1 \cdot \sin\alpha_2}\, \mu_{2,1}(dE) \\
    &= \cfrac{1}{|K| \cdot |\sigma K|} \cdot \cfrac{\omega_2}{4(n+2)} \cdot |\sigma K| \cdot |\sigma K|^2 \cdot \cfrac{2}{\omega_2} \cdot \E_K |YY'|^n \\
    &= \E_K |YY'|^n \cdot \cfrac{|\sigma K|^2}{|K|} \cdot \cfrac{1}{2(n+2)}.
\end{align*}

For the inequality $\E_K|XY|^n \leq \E_K|YY'|^n$ it suffices that
$$\cfrac{|K|}{|\sigma K|^2} \geq \cfrac{1}{2(n+2)}.$$

One may observe that all the constants tend to zero as the index of the moment grows, which means: for \textit{any} convex body, starting from some moment, the quantities under study are ordered in the stated way. It is also worth emphasising that the conditions for small $n$ are rather useless, since they require the violation of the isoperimetric inequality.

\section{Projective distributions and exact relations between the quantities}

We shall devote this section to the search for more precise relations. We shall be interested in the conditions under which one can compute exactly the ratio between the moments of the mean distances. Unfortunately, it turned out that results of this kind require a completely different technique and much more rigid restrictions. However, along the way interesting results arise for objects that have not been studied before.

The main difficulty arises in the attempt to find a parameter along which it would be convenient to \textit{``decompose''} our construction. Most of the standard quantities, such as the Cartesian coordinates or the angles with various directions, have densities and mutual dependencies that are hard to express, which complicates the analysis.

The first goal will be the construction of a parametrisation of the points inside a convex body that is convenient for further computations.

\subsection{Homothetic coordinates}
\label{sec:homothetic}

In what follows we shall use the Minkowski functional, so we give its definition here.

\begin{definition}[Minkowski functional]
For any vector space $X$ and its subset $K$ the Minkowski functional $\mu_K \colon X \to [0, \infty)$ is defined as
$$\mu_K(x) = \inf \left\{r > 0 : x \in rK\right\}.$$
\end{definition}

Now we are ready to introduce our parametrisation.
\begin{definition}
Let some convex figure $K \subset \R^d$ and a point $O \in K$ be fixed. Then any point $T \in K$, $T \neq O$, is specified by a pair $(\alpha, s)$, where $\alpha \in [0, 1]$, $s \in S^{d-1}$, as follows. The coordinate $s$ determines the direction of the vector $OT$; let the ray $OT$ meet $\partial K$ at the point $T_Y$, then $\alpha := \cfrac{|OT|}{|OT_Y|}$ is the Minkowski functional of the point $T$ in the coordinate system with origin at $O$ relative to the set $K$. In essence, $\alpha$ is the coefficient of the homothety with centre at the point $O$ under which the point $T$ falls on the image of the boundary.
\end{definition}

The parametrisation obtained has a rectangular domain of definition, which is already good, but in fact it has more good properties.
\begin{statement}
Let, as before, the point $X$ be distributed inside $K$ proportionally to its Lebesgue measure. Then its coordinates $X_\alpha$ and $X_s$ are independent. Moreover, the distribution of $X_\alpha$ has density $p(t) = d \cdot t^{d-1} \cdot \Ind_{[0,1]}(t)$.
\end{statement}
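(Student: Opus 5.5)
We plan to compute the joint distribution of $(X_\alpha, X_s)$ directly by a change of variables to generalised polar coordinates adapted to $K$. Place $O$ at the origin; since $O$ is interior, the radial function $r(s) := |OT_Y|$, $s \in S^{d-1}$, is well defined, positive and continuous (indeed Lipschitz) on the sphere. Every point $T \neq O$ of $K$ is then written uniquely as $T = \alpha\, r(s)\, s$ with $\alpha \in (0,1]$, $s \in S^{d-1}$. Conversely, every such pair gives a point of $K$, because $K$ is star-shaped with respect to $O$. The point $O$ itself and the boundary $\{\alpha = 1\}$ are Lebesgue-null, so they can be ignored.

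The key step is the Jacobian. We first pass to ordinary polar coordinates $T = \rho s$, for which $dT = \rho^{d-1}\, d\rho\, \sigma(ds)$. We then substitute $\rho = \alpha\, r(s)$ for fixed $s$, which gives $d\rho = r(s)\, d\alpha$. Hence
\[
dT = \alpha^{d-1} r(s)^{d}\, d\alpha\, \sigma(ds).
\]
For a measurable set $A \subset [0,1]$ and $B \subset S^{d-1}$, Fubini then yields
\[
\P(X_\alpha \in A,\ X_s \in B) = \frac{1}{|K|}\int_A \alpha^{d-1}\, d\alpha \int_B r(s)^d\, \sigma(ds).
\]
Taking $A = [0,1]$ and $B = S^{d-1}$ recovers the known formula $|K| = \frac{1}{d}\int_{S^{d-1}} r^d\, d\sigma$, which fixes the normalisation. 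Writing the right-hand side as
\[
\Bigl(d\int_A \alpha^{d-1}\,d\alpha\Bigr)\cdot\Bigl(\frac{1}{d|K|}\int_B r^d\, d\sigma\Bigr),
\]
we see that the joint law factorises into a product. This gives independence, and shows that $X_\alpha$ has density $d\,t^{d-1}\Ind_{[0,1]}(t)$. As a by-product, $X_s$ has density $r(s)^d/(d|K|)$ with respect to $\sigma$.

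The only point that needs care is justifying the change of variables when $r$ is merely continuous rather than smooth. We will handle this by not differentiating $r$ at all: for fixed $s$ the substitution $\rho = \alpha r(s)$ is a linear one-dimensional change of variables inside the radial integral, so only the polar-coordinate formula and Fubini are needed.

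An even shorter alternative for the law of $X_\alpha$ alone is the scaling argument $\P(X_\alpha \le t) = |tK|/|K| = t^d$. The same scaling applied to a cone $\{\rho s : s \in B\} \cap tK$ gives $\P(X_\alpha \le t,\ X_s \in B) = t^d\,\P(X_s \in B)$, which proves independence directly. I would likely present this scaling version as the main argument and keep the Jacobian computation as a remark.
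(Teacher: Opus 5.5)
Your proposal is correct. The scaling argument you intend to present as the main proof is exactly the paper's argument. The paper gets $\P(X_\alpha\le t)=t^d$ because $\{X_\alpha\le t\}$ is the event $X\in H_{O,t}(K)$. It gets independence by checking $\P(X_s\in A\mid X_\alpha\le t)=\P(X_s\in A)$ through the cone $K_A$, whose intersection with $H_{O,t}(K)$ scales by the same factor $t^d$ as $H_{O,t}(K)$ itself.

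Your Jacobian computation is a genuinely different route, and it is worth keeping rather than relegating to a remark. There are two reasons.
\begin{enumerate}
\item It gives the factorisation on all rectangles $A\times B$ at once. The scaling argument only checks it on the sets $\{X_\alpha\le t\}$, so it tacitly needs a $\pi$--$\lambda$ extension, both in your version and in the paper's.
\item It identifies the law of $X_s$: its density with respect to $\sigma$ is $r(s)^d/(d|K|)$. This is exactly what justifies the paper's later, unproved description of $Y_O=r(X_s)X_s$ for polytopes. On a facet $F$ at distance $h_F$ from $O$ one has $r^d\,d\sigma=h_F\,d\mathcal{H}^{d-1}$. Hence $Y_O$ is uniform on each facet, with weight $h_F|F|/(d|K|)$, the relative volume of the pyramid over $F$. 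In particular $Y_O\overset{d}{=}Y$ precisely when all $h_F$ coincide.
\end{enumerate}

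The scaling route, by contrast, is shorter and coordinate-free and avoids polar coordinates and Fubini, but it says nothing about the distribution of $X_s$.
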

\begin{proof}
First let us show that the distribution of the $\alpha$-coordinate coincides with the stated one. Note that
\[
    \P(X_\alpha \leq t) = t^d.
\]
Indeed, the inequality $X_\alpha \leq t$ means that the point $X$ has fallen into the image of $K$ under the homothety with centre at $O$ and coefficient $t$; this image has volume $t^d$ times smaller than the original figure. From this we obtain that $X_\alpha$ has the stated density.

It remains to establish the independence. For this it suffices to show that
\begin{equation}\label{Lotnikov:Xndependancy1}
    \P(X_s \in A \mid X_\alpha \leq t) = \P(X_s \in A) \qquad \text{for all } t \in [0, 1],\; A \subset S^{d-1}.
\end{equation}

Indeed, by the definition of conditional probability
\[
    \P(X_s \in A \mid X_\alpha \leq t) = \cfrac{\P(X_s \in A,\; X_\alpha \leq t)}{\P(X_\alpha \leq t)}.
\]
Denote by $K_A$ the cone with apex at the point $O$ and base $A$, and by $H_{O, t}(K)$ the image of $K$ under the homothety with centre at the point $O$ and coefficient $t$. Then
\[
    \cfrac{\P(X_s \in A,\; X_\alpha \leq t)}{\P(X_\alpha \leq t)} = \cfrac{\nu_d(K_A \cap H_{O, t}(K))}{\nu_d(H_{O, t}(K))} = \cfrac{\nu_d(K_A \cap K)}{\nu_d(K)} = \P(X_s \in A).
\]

The penultimate passage is valid because under the homothety with centre at the point $O$ and coefficient $1/t$ the set $K_A \cap H_{O, t}(K)$ goes over into $K_A \cap K$, and $H_{O, t}(K)$ into $K$, the volumes changing by the same factor, so that the ratio is preserved.
\end{proof}

\noindent Now, knowing such a decomposition, we can prove the announced theorem on projective distributions.

\subsection{Proof}

First let us verify the last item of the theorem. Recall that the lower index $\alpha$ means taking the corresponding coordinate of a point inside $K$.

\[
\E|XX'|^n = \E\left[\E[|XX'|^n \mid (X_\alpha, X'_\alpha)]\right]
\]

\[
= \int_0^1\int_0^1 \E[|XX'|^n \mid X_\alpha = t_1, X'_\alpha = t_2] \cdot d^2 \cdot t_1^{d-1}t_2^{d-1} \, dt_1dt_2
\]
From the properties of homothety and the independence of the $\alpha$ and $t$ coordinates

\[ \forall t_1 < t_2 \quad \E[|XX'|^n \mid X_\alpha = t_1, X'_\alpha = t_2] = t_2^n \cdot \E\!\left[|XX'|^n \mid X_\alpha = \cfrac{t_1}{t_2},\; X'_\alpha = 1 \right].
\]

which allows us to continue the chain of equalities:
\begin{align*}
    \E|XX'|^n
    &= 2 \cdot \int_0^1\int_0^{t_2} \E[|XX'|^n \mid X_\alpha = t_1, X'_\alpha = t_2] \cdot d^2 \cdot t_1^{d-1}t_2^{d-1} \, dt_1dt_2 \\
    &= 2 \cdot \int_0^1\int_0^{t_2} \E[|XX'|^n \mid X_\alpha = \cfrac{t_1}{t_2}, X'_\alpha = 1] \cdot t_2^n \cdot d^2 \cdot t_1^{d-1}t_2^{d-1} \, dt_1dt_2 \\
    &= 2 \cdot \int_0^1\int_0^{t_2} \E[|XX'|^n \mid X_\alpha = \cfrac{t_1}{t_2}, X'_\alpha = 1] \cdot d^2 \cdot \left(\cfrac{t_1}{t_2}\right)^{d-1} \cdot t_2^{n+2d-1} \, \cfrac{dt_1}{t_2} \cdot dt_2 \\
    &= 2d \cdot \int_0^1\int_0^1 \E[|XX'|^n \mid X_\alpha = t_1, X'_\alpha = 1] \cdot d \cdot t_1^{d-1} \cdot t_2^{n+2d-1} \, dt_1\, dt_2 \\
    &= \cfrac{2d}{n+2d} \cdot \int_0^1 \E[|XX'|^n \mid X_\alpha = t_1, X'_\alpha = 1] \cdot d \cdot t_1^{d-1} \, dt_1 \\
    &= \cfrac{2d}{n+2d} \cdot \E|XY_O|^n.
\end{align*}

% $$ $$
% $$ 2 \cdot \int_0^1\int_0^{t_2} \E[|XX'|^p \mid X_\alpha = \cfrac{t_1}{t_2}, X'_\alpha = 1] \cdot 2 \cdot \cfrac{t_1}{t_2} \cdot t_2^4 \, \cfrac{dt_1}{t_2}dt_2 = $$
% $$ 2 \cdot \int_0^1\int_0^1 \E[|XX'|^p \mid X_\alpha = t_1, X'_\alpha = 1] \cdot 2 \cdot t_1t_2^{p+3} \, dt_1dt_2 = $$
% $$ 2 \cdot \int_0^1\int_0^1 \E[|XX'|^p \mid X_\alpha = t_1, X'_\alpha = 1] \cdot 2 \cdot t_1t_2^{p+3} \, dt_2dt_1 = $$
% $$ \cfrac{2}{p+4} \cdot \int_0^1\int_0^1 \E[|XX'|^p \mid X_\alpha = t_1, X'_\alpha = 1] \cdot 2 t_1 \, dt_1 = $$
% $$ \cfrac{2}{p+4} \cdot \E|XY_0| $$

% \noindent Statement 2.1 follows from what has been proved. Our quantities are bounded a priori, and hence are determined by their moments. At the same time, by the first item the moments of the quantity $|XY_O|$ are expressed through the moments of $|XX'|$, and hence do not depend on the choice of the point $O$. Consequently, neither do the distributions. \newline

\noindent The second item is also practically a direct consequence of the first. Indeed, as we know, for any bounded random variable $X$ its characteristic function $\varphi$ is such that:
\[
    \varphi(t) = \E e^{itX} = \sum\limits_{k=0}^{\infty} \cfrac{\E X^k \cdot i^k}{k!} \cdot t^k.
\]
In particular,
\begin{align*}
    \mu_{XY_O}(t) := \E e^{it|XY_O|}
    &= \sum\limits_{k=0}^{\infty} \cfrac{\E|XY_O|^k \cdot i^k}{k!} \cdot t^k
     = \sum\limits_{k=0}^{\infty} \cfrac{\E|XX'|^k \cdot i^k}{k!} \cdot \cfrac{k+2d}{2d} \cdot t^k \\
    &= \cfrac{1}{t^{2d-1} \cdot 2d} \sum\limits_{k=0}^{\infty} \cfrac{\E|XX'|^k \cdot i^k}{k!} \cdot (k+2d) \cdot t^{k+2d-1} \\
    &= \cfrac{1}{t^{2d-1} \cdot 2d} \cdot \cfrac{d}{dt}\left[\sum\limits_{k=0}^{\infty} \cfrac{\E|XX'|^k \cdot i^k}{k!} \cdot t^{k+2d}\right] \\
    &= \cfrac{1}{t^{2d-1} \cdot 2d} \cdot \cfrac{d}{dt}\left[\E e^{it|XX'|} \cdot t^{2d}\right].
\end{align*}

\noindent The verification of the third item is carried out analogously to the verification of the first. By the properties of homothety $\E[|OX|^n \mid X_\alpha = t] = \E[|OX|^n \mid X_\alpha = 1] \cdot t^n$, therefore
\begin{align*}
    \E|OX|^n
    &= \E\left[\E[|OX|^n \mid X_\alpha]\right]
     = \int_0^1 \E[|OX|^n \mid X_\alpha = t] \cdot d \cdot t^{d-1} \, dt \\
    &= \int_0^1 \E[|OX|^n \mid X_\alpha = 1] \cdot d \cdot t^{d+n-1} \, dt \\
    &= \cfrac{d}{n+d} \cdot \E[|OX|^n \mid X_\alpha = 1]
     = \cfrac{d}{n+d} \cdot \E|OY_O|^n.
\end{align*}
The relation between the characteristic functions is derived analogously.

From the results described one can also obtain a rather unexpected corollary.
\begin{corollary}
The distribution of $|XY_O|$ does not depend on the choice of the point $O$.
\end{corollary}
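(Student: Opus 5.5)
The plan is to derive the corollary from the first item of the theorem on projective distributions, using the moment determinacy of bounded random variables. By that item, for every integer $n \geq 1$,
\[
\E|XY_O|^n = \frac{n+2d}{2d}\,\E|XX'|^n .
\]
The right-hand side involves only the interior pair $X, X'$, so it does not depend on the choice of $O$. Consequently all integer moments of $|XY_O|$ are the same for every interior point $O$. The case $n=0$ is trivial.

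The next step is to use boundedness. For every $O$, the variable $|XY_O|$ takes values in $[0,\mathrm{diam}\,K]$, a compact interval. A distribution on a compact interval is uniquely determined by its sequence of moments (the Hausdorff moment problem). Equivalently, by Weierstrass approximation, polynomials are dense in $C[0,\mathrm{diam}\,K]$, so the expectations of all continuous test functions coincide. Hence the laws of $|XY_O|$ and $|XY_{O'}|$ coincide for any two interior points $O, O'$.

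Alternatively, I would invoke the second item of the theorem directly. It expresses the characteristic function of $|XY_O|$ through that of $|XX'|$, which does not involve $O$. Lévy's uniqueness theorem then gives the claim. This route hides the same analytic fact inside the power-series expansion used in the proof of item 2; that expansion is valid because the moments are bounded by $(\mathrm{diam}\,K)^k$, so the series has an infinite radius of convergence.

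The only point needing care is the justification of item 1 itself, in particular its scaling step. That step uses independence of $X_\alpha$ and $X_s$, and the fact that conditioning on $X'_\alpha = 1$ yields exactly the projective point $Y_O$. Both are established in the proof of the theorem, so I expect no real obstacle beyond citing moment determinacy on a compact support.
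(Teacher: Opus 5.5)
Your proposal is correct and is essentially the paper's own argument: item~1 of the theorem on projective distributions makes every moment $\E|XY_O|^n = \frac{n+2d}{2d}\,\E|XX'|^n$ independent of $O$, and since $|XY_O|$ is supported in the compact interval $[0,\mathrm{diam}\,K]$, its law is determined by its moments. Your characteristic-function variant via item~2 is, as you note yourself, just a repackaging of the same moment-determinacy fact.
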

\begin{proof}
    We know that
    $$\E|XY_O|^n  = \cfrac{n+2d}{2d} \cdot \E|XX'|^n .$$
    That is, the moments of the distribution of $|XY_O|$ do not depend on the choice of the point $O$, but our distributions are bounded a priori. Consequently, the distributions coincide as well.
\end{proof}

\subsection{On the distributions $Y_O$}
Note that the distributions $Y_O$ in the case of convex polytopes are arranged in a rather simple way:
\begin{enumerate}
    \item They are concentrated on the faces of dimension $d-1$.
    \item The distribution inside each face is uniform.
    \item The uniform distributions inside the faces are mixed with weights proportional to the volume of the $d$-dimensional pyramid with apex at the point $O$ and base on the corresponding face.
\end{enumerate}

\noindent  In particular, this means that $Y_O \overset{d}{=} Y$ if and only if there exists a sphere inscribed in the polytope and $O$ is its centre. \newline

That is, provided that our figure is circumscribed, we can replace all the projective distributions by the ordinary uniform distributions on the boundary.

\subsection{Relation with the mean distance between two points on the boundary}

We have obtained an exact expression for the moments of the distance between two interior points in terms of the moments of the distance between an interior and a boundary point, which is in itself very interesting, but the goal was originally the relation with the distance between two boundary points.
This relation is stated in the following theorem.

\begin{theorem}
    Let $K \subset \R^d$ be a convex body, and let $O$ be some interior point of it. Then
    \begin{align*}
        \E|XX'|^2 &= \cfrac{2d}{d+2} \cdot \E|OY_O|^2 - 2 \cdot \cfrac{d^2}{(d+1)^2} \cdot |O\E Y_O|^2, \\
        \E|XY_O|^2 &= \left(1 + \cfrac{d}{d+2}\right) \cdot \E|OY_O|^2 - 2 \cdot \cfrac{d}{d+1} \cdot |O\E Y_O|^2, \\
        \E|Y_O Y_O'|^2 &= 2 \cdot \E|OY_O|^2 - 2 \cdot |O\E Y_O|^2.
    \end{align*}
\end{theorem}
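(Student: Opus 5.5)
Place $O$ at the origin. All three identities then follow from the elementary expansion $\E|Z-W|^2=\E|Z|^2+\E|W|^2-2\langle \E Z,\E W\rangle$ for independent $Z,W$. It therefore suffices to express the second moment $\E|X|^2$ and the mean $\E X$ of an interior point through the corresponding quantities for $Y_O$.

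For this we use the homothetic coordinates of Section~\ref{sec:homothetic}. We write $X = X_\alpha\cdot Y_O$, where $X_\alpha$ has density $d t^{d-1}$ on $[0,1]$ and is independent of the direction $X_s$. Conditionally on $X_\alpha=1$, the point is exactly $Y_O$, by the definition of the projective distribution. Hence $\E|X|^2=\E X_\alpha^2\cdot \E|Y_O|^2=\frac{d}{d+2}\E|OY_O|^2$, which is the third item of the projective-distribution theorem with $n=2$. Likewise $\E X=\E X_\alpha\cdot \E Y_O=\frac{d}{d+1}\E Y_O$.

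Substituting into the expansion gives the three formulas:
\begin{align*}
\E|XX'|^2 &= 2\cdot\tfrac{d}{d+2}\E|OY_O|^2-2\tfrac{d^2}{(d+1)^2}|\E Y_O|^2,\\
\E|XY_O|^2 &= \tfrac{d}{d+2}\E|OY_O|^2+\E|OY_O|^2-2\tfrac{d}{d+1}|\E Y_O|^2,\\
\E|Y_OY_O'|^2 &= 2\E|OY_O|^2-2|\E Y_O|^2.
\end{align*}
In the second formula $X$ and $Y_O$ are taken independent, as throughout the paper. The vector $\E Y_O$ is identified with $O\E Y_O$.

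The only point requiring care, and the main obstacle, is justifying the factorization $X\overset{d}{=}X_\alpha Y_O$ with $X_\alpha$ independent of $Y_O$. This follows from the independence of $X_\alpha$ and $X_s$ proved in the proposition on homothetic coordinates, together with the fact that $Y_O$ is a deterministic function of $X_s$, namely the boundary point in direction $X_s$. As a consistency check, the first identity combined with $\E|XX'|^2=\frac{2d}{2+2d}\E|XY_O|^2$ from the projective-distribution theorem gives the same result.
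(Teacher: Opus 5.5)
Your proof is correct and is essentially the paper's argument. The paper expands the square conditionally on the radial coordinates, getting $f(\alpha,\beta)=(\alpha^2+\beta^2)\E|OY_O|^2-2\alpha\beta|O\E Y_O|^2$, and then either integrates against $d\alpha^{d-1}\,d\beta^{d-1}$ or sets $\beta=1$, respectively $\alpha=\beta=1$; this is the same computation as your unconditional expansion with $\E X_\alpha=\frac{d}{d+1}$ and $\E X_\alpha^2=\frac{d}{d+2}$, and it rests on the same independence of $X_\alpha$ and $Y_O=\gamma(X_s)$.
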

\begin{proof}
    Let
    $$f(\alpha, \beta) = \E\bigl[|XX'|^2 \mid X_\alpha = \alpha,\, X'_\alpha = \beta\bigr] = \int_{\S^{d-1}}\int_{\S^{d-1}} |\alpha \cdot \gamma(t) - \beta \cdot \gamma(s)|^2 \, d\mu_S(s)\, d\mu_S(t),$$
    where $\gamma(t)$ is the point with homothetic coordinates $(1, t)$, and $\mu_S$ is the distribution of the $s$-coordinate in our body. Let us expand the square of the Euclidean norm:
    \begin{align*}
        f(\alpha, \beta)
        &= \int_{\S^{d-1}}\int_{\S^{d-1}} \langle \alpha \gamma(t) - \beta \gamma(s),\, \alpha \gamma(t) - \beta \gamma(s) \rangle \, d\mu_S(s)\, d\mu_S(t) \\
        &= \int_{\S^{d-1}}\int_{\S^{d-1}} \bigl[\alpha^2 \langle\gamma(t), \gamma(t)\rangle - 2 \alpha\beta \langle\gamma(t), \gamma(s)\rangle + \beta^2 \langle\gamma(s), \gamma(s)\rangle\bigr] \, d\mu_S(s)\, d\mu_S(t) \\
        &= (\alpha^2 + \beta^2) \int_{\S^{d-1}} \langle\gamma(t), \gamma(t)\rangle \, d\mu_S(t) - 2 \alpha\beta \int_{\S^{d-1}}\int_{\S^{d-1}} \langle\gamma(t), \gamma(s)\rangle \, d\mu_S(s)\, d\mu_S(t) \\
        &= (\alpha^2 + \beta^2) \cdot \E|OY_O|^2 - 2 \alpha\beta \cdot |O\E Y_O|^2.
    \end{align*}

    Now all three quantities sought are expressed through $f$.

    Between two interior points --- we integrate over both radii:
    \begin{align*}
        \E|XX'|^2
        &= \int_0^1 \int_0^1 f(\alpha, \beta) \cdot d \cdot \alpha^{d-1} \cdot d \cdot \beta^{d-1}\, d\alpha\, d\beta \\
        &= \int_0^1 \int_0^1 \bigl[(\alpha^2 + \beta^2) \cdot \E|OY_O|^2 - 2 \alpha\beta \cdot |O\E Y_O|^2\bigr] \cdot d \cdot \alpha^{d-1} \cdot d \cdot \beta^{d-1}\, d\alpha\, d\beta \\
        &= \cfrac{2d}{d+2} \cdot \E|OY_O|^2 - 2 \cdot \cfrac{d^2}{(d+1)^2} \cdot |O\E Y_O|^2.
    \end{align*}

    Between an interior and a boundary point --- we substitute $\beta = 1$ and integrate over $\alpha$:
    \begin{align*}
        \E|XY_O|^2
        &= \int_0^1 f(\alpha, 1) \cdot d \cdot \alpha^{d-1}\, d\alpha \\
        &= \int_0^1 \bigl[(\alpha^2 + 1) \cdot \E|OY_O|^2 - 2 \alpha \cdot |O\E Y_O|^2\bigr] \cdot d \cdot \alpha^{d-1}\, d\alpha \\
        &= \left(1 + \cfrac{d}{d+2}\right) \cdot \E|OY_O|^2 - 2 \cdot \cfrac{d}{d+1} \cdot |O\E Y_O|^2.
    \end{align*}

    Between two boundary points --- we substitute $\alpha = \beta = 1$:
    $$\E|Y_O Y_O'|^2 = f(1, 1) = 2 \cdot \E|OY_O|^2 - 2 \cdot |O\E Y_O|^2.$$
\end{proof}

Combining this theorem with the previous one, we obtain the desired result.
\begin{corollary}
    If $O$ is the centre of mass of the corresponding projective distribution, then
    $$\E|XX'|^2 = \cfrac{d}{d+1} \cdot \E|XY_O|^2 = \cfrac{2d}{d+2} \cdot \E|OY_O|^2 = \cfrac{d}{d+2} \cdot \E|Y_OY_O'|^2.$$
\end{corollary}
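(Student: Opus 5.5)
The corollary should follow by direct substitution into the three identities of the preceding theorem. The key observation is that when $O$ is the centre of mass of $Y_O$, the vector $\E Y_O$ equals $O$. Hence $|O\E Y_O|^2 = 0$, and every term involving it drops out. Writing $M := \E|OY_O|^2$, the theorem then gives
\[
\E|XX'|^2 = \frac{2d}{d+2}\,M, \qquad
\E|XY_O|^2 = \Bigl(1 + \frac{d}{d+2}\Bigr) M = \frac{2(d+1)}{d+2}\,M, \qquad
\E|Y_OY_O'|^2 = 2M .
\]

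The plan is to express each quantity in the chain through $M$ and compare.

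First, $\frac{d}{d+1}\cdot\E|XY_O|^2 = \frac{d}{d+1}\cdot\frac{2(d+1)}{d+2}M = \frac{2d}{d+2}M$. This matches $\E|XX'|^2$.

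Second, $\frac{d}{d+2}\cdot\E|Y_OY_O'|^2 = \frac{2d}{d+2}M$, which gives the last equality.

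The middle term $\frac{2d}{d+2}\E|OY_O|^2$ is exactly $\frac{2d}{d+2}M$ by definition, so all four expressions coincide.

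As a consistency check, I would compare with the first item of the projective-distribution theorem for $n=2$. It gives $\E|XX'|^2 = \frac{2d}{2+2d}\E|XY_O|^2 = \frac{d}{d+1}\E|XY_O|^2$, which agrees with the above and does not even require the centring assumption. This confirms the arithmetic.

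There is no real obstacle here. The only point needing care is the interpretation of the hypothesis. The term $|O\E Y_O|^2$ in the theorem arises from $\int\int\langle\gamma(t),\gamma(s)\rangle\,d\mu_S\,d\mu_S = |\E(Y_O - O)|^2$, with vectors taken from $O$. So "centre of mass at $O$" means precisely that this term vanishes.

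I would also note, for the later centrally symmetric proposition, that this hypothesis holds automatically when $O$ is the centre of symmetry. In that case the distribution of $Y_O$ is symmetric under $x\mapsto 2O-x$, because the homothetic projection commutes with the central symmetry.
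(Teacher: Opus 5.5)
Your proof is correct and is essentially the paper's argument: the paper also sets $|O\E Y_O|^2 = 0$ in the three second-moment identities of the preceding theorem, and combines them with item~1 of the projective-distribution theorem for $n=2$, which is the step you use as a consistency check. Your arithmetic and your remark about the centrally symmetric case (where $\E Y_O = O$ automatically) are also correct.
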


Such a point $O$ exists: the map $O \mapsto \E Y_O$ continuously maps the convex compact set $K$ into itself, so by Brouwer's theorem it has a fixed point.

\begin{corollary}
    If $O$ is the centre of the sphere inscribed in a centrally symmetric body, then
    $$\E|XX'|^2 = \cfrac{d}{d+1} \cdot \E|XY|^2 = \cfrac{2d}{d+2} \cdot \E|OY|^2 = \cfrac{d}{d+2} \cdot \E|YY'|^2.$$
\end{corollary}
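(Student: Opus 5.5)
The plan is to obtain this corollary by specialising the preceding corollary, which was stated for a point $O$ with $O = \E Y_O$. This requires two facts about $O$. First, $Y_O \overset{d}{=} Y$, so that every projective quantity can be replaced by its uniform-boundary counterpart. Second, $\E Y_O = O$, so that the terms $|O\E Y_O|^2$ in the theorem on second moments vanish. Once both facts hold, the chain
$$\E|XX'|^2 = \cfrac{d}{d+1} \cdot \E|XY_O|^2 = \cfrac{2d}{d+2} \cdot \E|OY_O|^2 = \cfrac{d}{d+2} \cdot \E|Y_OY_O'|^2$$
from the previous corollary becomes the stated one after substituting $Y$ for $Y_O$. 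In that substitution, $Y_O'$ is an independent copy of $Y_O$ and $Y'$ is an independent copy of $Y$.

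For the first fact, I would work with polytopes and use the description of $Y_O$ given in the subsection on the distributions $Y_O$. The measure $Y_O$ is uniform on each facet $F$, and the facet weights are proportional to $\tfrac1d\,\mathrm{dist}(O,\operatorname{aff}F)\,|F|$. If $O$ is the centre of the inscribed sphere of a circumscribed polytope, every such distance equals the inradius $r$. The weights are then proportional to $|F|$, which is the uniform surface measure, so $Y_O \overset{d}{=} Y$; this is also the earlier proposition on circumscribed bodies. For a general circumscribed body I would argue directly. Choose homothetic coordinates centred at $O$. The density of $Y_O$ with respect to surface measure at a boundary point $y$ is proportional to $\langle y - O, n_K(y)\rangle$, which is the cone-volume density. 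For a body circumscribed about a ball centred at $O$, every support hyperplane at a boundary point is tangent to the inscribed sphere. Hence $\langle y-O,n_K(y)\rangle = r$ almost everywhere, and the density is constant.

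For the second fact, I would use central symmetry. The incentre of a centrally symmetric body can be taken to be its centre of symmetry: if a ball of radius $r$ centred at $c$ lies in $K$, then so does its reflection, and by convexity so does the ball of radius $r$ centred at the midpoint, which is $O$. The uniform boundary measure is invariant under the reflection $y \mapsto 2O - y$, so $\E Y = O$. Combined with $Y_O \overset{d}{=} Y$, this gives $\E Y_O = O$. The hypotheses of the preceding corollary are then satisfied and the identity follows.

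I expect the main obstacle to be interpreting "the sphere inscribed in the body". For a body that is not circumscribed about its inball, such as a long cylinder, $Y_O \neq Y$ in general, and the statement is false as written. I would therefore read it, in line with the companion corollary in the Results section, as referring to circumscribed centrally symmetric bodies, meaning bodies whose every support hyperplane touches the inball. If a proof for non-polytopal circumscribed bodies is needed, I would prove the polytope case and pass to the limit. In the limit step, the second moments of $X$ converge by the continuity proposition of Section~\ref{sec:continuity}, and those of $Y$ converge by Tokmachev's continuity of $\theta^p$. The approximating polytopes must themselves be circumscribed about the same ball, which is achieved by intersecting finitely many tangent half-spaces in symmetric pairs.
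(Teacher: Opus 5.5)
Your proposal is correct and is essentially the paper's argument: specialise the preceding corollary (the case $O = \E Y_O$) by using $Y_O \overset{d}{=} Y$ for the incentre of a circumscribed body and $\E Y_O = O$ from central symmetry, and your reading of the hypothesis as ``circumscribed'' is the one the paper intends (compare the corollary for circumscribed centrally symmetric polytopes in the Results section). Two small tightenings are worth making. First, define circumscription by tangency of the support hyperplane at almost every boundary point; if literally every support hyperplane touched the inball, $K$ would have to be the ball itself. Second, note that such a ball is unique: its reflection in the centre of symmetry has the same property, and the outer normals at almost every boundary point span $\R^d$. Hence $O$ is the centre of symmetry, not merely ``can be taken'' to be it.
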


\section{The case of a triangle}
\label{sec:triangles}

The triangle is the simplest and the most representative example of a non-symmetric convex body. It is all the more interesting that for it the Zaporozhets--Tarasov conjecture not only holds but also admits a strengthening: an inequality holds between the \emph{one-dimensional projections} of the distributions of the interior and the boundary points onto each direction separately. By the Cauchy--Crofton formula, such a per-projection inequality automatically yields the comparison of the full mean distances as well.

Let us state this strengthening as a separate conjecture.

\begin{hypothesis}[Per-projection conjecture]\label{hyp:projection}
For any convex body $K \subset \R^2$ and any direction $u \in [0, \pi)$
\[
    \E |P_u(X) - P_u(X')| \;\leq\; \E |P_u(Y) - P_u(Y')|,
\]
where $X, X'$ are independent uniform points inside $K$, $Y, Y'$ are independent uniform points on $\partial K$, and $P_u$ is the orthogonal projection onto the direction $u$.
\end{hypothesis}

Applying the Cauchy--Crofton formula (see Section~\ref{sec:centrally-symmetric}) to both sides and integrating over $u$, we obtain $\Delta(K) \leq \theta(K)$. Thereby the per-projection conjecture implies the original Zaporozhets--Tarasov conjecture. It is known that it holds for centrally symmetric bodies in $\R^2$ (this is, in essence, the content of Section~\ref{sec:centrally-symmetric}). The main goal of the present section is to prove it for all triangles as well.

\begin{theorem}\label{thm:triangle-projection}
For any triangle $K \subset \R^2$ and any direction $u \in [0, \pi)$ the per-projection inequality of Conjecture~\ref{hyp:projection} holds. In particular, for any triangle
\[
    \Delta(K) \;\leq\; \theta(K).
\]
\end{theorem}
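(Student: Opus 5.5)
We would reduce the statement to a one-dimensional inequality between explicit distributions on a segment. Fix a direction $u$ and translate and rescale along $u$ so that the projections of the vertices $A,B,C$ are $0\le\beta\le 1$, with $P_u(A)=0$, $P_u(B)=\beta$, $P_u(C)=1$. Both sides of the inequality scale linearly, so this loses nothing. The degenerate cases $\beta\in\{0,1\}$ would follow by continuity, or by the same computation.

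For a real random variable $Z$ with distribution function $F$ we will use the identity $\E|Z-Z'| = 2\int F(1-F)\,dt$. Each side then becomes an explicit integral.
\begin{itemize}
\item The projection of $X$ has the triangular (tent) law on $[0,1]$ with mode $\beta$: $F_X(t)=t^2/\beta$ on $[0,\beta]$ and $F_X(t)=1-(1-t)^2/(1-\beta)$ on $[\beta,1]$. Its Gini mean difference is a closed-form rational function $G_X(\beta)$.
\item The projection of $Y$ is a mixture of three uniform laws: on $[0,1]$ from side $AC$, on $[0,\beta]$ from side $AB$, and on $[\beta,1]$ from side $BC$. The weights are $w_i=\ell_i/(\ell_0+\ell_1+\ell_2)$, where $\ell_0=|AC|$, $\ell_1=|AB|$, $\ell_2=|BC|$.
\end{itemize}
Writing $D_{ij}$ for the mean absolute difference between independent samples from the $i$-th and $j$-th uniform laws, which are elementary polynomials in $\beta$, we get $\E|P_u(Y)-P_u(Y')| = w^{\top}Dw$.

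The key structural fact is that $w\mapsto w^{\top}Dw$ is concave on the simplex. This holds because $|x-y|$ is a conditionally negative definite kernel, so $\E|Z-Z'|$ is concave under mixing; equivalently, $\int F(1-F)$ is concave in $F$ and $F$ is linear in $w$. Hence, for fixed $\beta$, the minimum of the right-hand side over any convex set of admissible weights is attained at an extreme point. We would enlarge the true set of admissible $(\ell_0,\ell_1,\ell_2)$ to the convex polyhedron cut out by the projection constraints $\ell_0\ge 1$, $\ell_1\ge\beta$, $\ell_2\ge 1-\beta$ and the triangle inequalities, in particular $\ell_0\le\ell_1+\ell_2$. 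The image of this polyhedron under normalisation $\ell\mapsto w$ is again convex, being a projective image of a convex set, and its extreme points come from the vertices and recession directions of the polyhedron. Examples are $w=(1/2,\,\beta/2,\,(1-\beta)/2)$ from the flat triangle with $\ell_0=1$, and limit points such as $w\to(1/2,1/2,0)$ or $(1/2,0,1/2)$, and $w\to(0,1,0)$ or $(0,0,1)$ where one side dominates. At each of these finitely many points, the claim becomes a polynomial inequality in $\beta\in[0,1]$ against $G_X(\beta)$, which we would verify by direct algebra.

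The main obstacle is that this relaxation may be too loose: some extreme points of the enlarged set might not be realised by a genuine triangle, and there the inequality could fail. If that happens, we would parametrise triangles honestly by $\beta$ and the transverse coordinates of $B$ and $C$ relative to $A$, and exploit the exact coupling of the lengths, namely that $\ell_0^2-1$, $\ell_1^2-\beta^2$ and $\ell_2^2-(1-\beta)^2$ are squares of transverse offsets which satisfy one additive relation. We would then minimise over that two-parameter family, again using concavity in $w$ to push to the boundary. Once the per-direction inequality is established for every $u$, Proposition~\ref{stmt:cauchy-crofton} with $n=1$ integrates it over $u\in[0,\pi)$ and gives $\Delta(K)\le\theta(K)$.
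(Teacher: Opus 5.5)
Your strategy is essentially the paper's: the boundary side is concave in the mixing weights, so you only need to check the inequality at extreme admissible weights. But the step that carries the proof, the list of extreme points, is wrong as written, and the check would fail there.

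\emph{The spurious points.} The limits $w\to(0,1,0)$ and $w\to(0,0,1)$ ``where one side dominates'' are excluded by the triangle inequalities $\ell_1\le\ell_0+\ell_2$ and $\ell_2\le\ell_0+\ell_1$, which you said you would impose. At these points the desired inequality is actually false. For $w=(0,1,0)$ the projection of $Y$ is uniform on $[0,\beta]$, with mean difference $\beta/3$. The tent law has mean difference $2(\beta^2-\beta+2)/15$. At $\beta=\tfrac12$ this gives $\tfrac16<\tfrac{7}{30}$. So if you keep only $\ell_0\le\ell_1+\ell_2$, the relaxation really is too loose, and you are left with your fallback, which is a plan rather than an argument. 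You also miss the genuine extreme point $(0,\tfrac12,\tfrac12)$. The flat-triangle point $(\tfrac12,\tfrac{\beta}{2},\tfrac{1-\beta}{2})$ is not extreme; it lies on the edge $w_0=\tfrac12$.

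\emph{The fix.} The three triangle inequalities are equivalent to $w_i\le\tfrac12$ for every $i$. The lower bounds $\ell_0\ge1$, $\ell_1\ge\beta$, $\ell_2\ge1-\beta$ are harmless, since $\ell\mapsto\lambda\ell$ with $\lambda\ge1$ preserves admissibility and leaves $w$ unchanged. Hence the closure of the admissible weights is exactly the triangle with vertices $(\tfrac12,\tfrac12,0)$, $(\tfrac12,0,\tfrac12)$, $(0,\tfrac12,\tfrac12)$. Each vertex is a limit of genuine triangles: send one vertex to infinity perpendicular to $u$. So the relaxation loses nothing, and your worry about non-realisable extreme points disappears.

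\emph{The cleaner one-parameter form (the paper's route).} The mixture's distribution function is piecewise linear with a single break at $\beta$. It is therefore determined by the single parameter $A=F_Y(\beta)=\beta w_0+w_1$. Then $\int F_Y(1-F_Y)$ is a concave quadratic in $A$, and the bounds $w_i\le\tfrac12$ give $A\in[\beta/2,(1+\beta)/2]$. At both endpoints, $\int F_Y(1-F_Y)-\int F_X(1-F_X)=(\beta^2-\beta+2)/60>0$. With this correction your argument goes through.
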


The remaining part of the section is devoted to the proof of this theorem.

\subsection{Reduction to one-dimensional projections}

Fix a direction $u$. For independent one-dimensional random variables $\xi$, $\xi'$ with common distribution function $F$ the classical formula
\[
    \E |\xi - \xi'| \;=\; 2 \int_{-\infty}^{+\infty} F(t)\bigl(1 - F(t)\bigr)\, dt
\]
holds. Therefore the inequality of Conjecture~\ref{hyp:projection} for a fixed direction $u$ is equivalent to the comparison of two integrals of products $F(1-F)$, where $F$ is the distribution function of the projection of the interior (respectively, the boundary) point onto $u$.

Let us choose the coordinate system so that $u$ coincides with the abscissa axis. By a shift and a scaling we may assume that $P_u(K) = [0, 1]$. The vertices of the triangle are projected into three points $0, p, 1$ for some $p \in (0, 1)$ (after ordering), so that the vertices have the form $V_1 = (0, y_a)$, $V_2 = (p, y_b)$, $V_3 = (1, y_c)$.

\subsection{The distribution of the projection of an interior point}

The width function of the triangle $w(x)$ --- the distance between the upper and the lower boundary at a fixed abscissa $x$ --- is piecewise linear with $w(0) = w(1) = 0$ and a unique break point at $x = p$. The density of the projection of an interior point is proportional to $w$:
\[
    f_X(x) \;=\; \frac{w(x)}{S}, \qquad S \;=\; \int_0^1 w(x)\, dx \;=\; \frac{h}{2}, \qquad h \;=\; w(p).
\]
Direct integration gives the explicit form of the distribution function:
\[
F_X(x) \;=\;
\begin{cases}
\dfrac{x^2}{p}, & x \in [0, p], \\[6pt]
1 - \dfrac{(1-x)^2}{1-p}, & x \in [p, 1].
\end{cases}
\]
It is essential that \emph{$F_X$ depends only on the parameter $p$} and depends neither on the height $h$ nor on the particular ordinates of the vertices.

\subsection{The distribution of the projection of a boundary point}

The boundary of the triangle consists of three edges. Each of them is a segment of a straight line, onto whose projection the uniform distribution is projected uniformly. We denote the lengths of the edges by
\[
    \ell_1 \;=\; \sqrt{p^2 + (y_b - y_a)^2}, \qquad
    \ell_2 \;=\; \sqrt{(1-p)^2 + (y_c - y_b)^2}, \qquad
    \ell_3 \;=\; \sqrt{1 + (y_c - y_a)^2},
\]
where $\ell_1$ is the edge $V_1 V_2$ (projected uniformly onto $[0, p]$), $\ell_2$ is the edge $V_2 V_3$ (uniformly onto $[p, 1]$), $\ell_3$ is the edge $V_1 V_3$ (uniformly onto $[0, 1]$). We denote the perimeter by $P = \ell_1 + \ell_2 + \ell_3$. The distribution function of the projection of a boundary point is piecewise linear:
\[
F_Y(x) \;=\;
\begin{cases}
\alpha\, x, & x \in [0, p], \\
A + \beta(x - p), & x \in [p, 1],
\end{cases}
\]
where
\[
\alpha = \frac{\ell_1 + p\, \ell_3}{p\, P}, \qquad
\beta = \frac{\ell_2 + (1-p) \ell_3}{(1-p)\, P}, \qquad
A = F_Y(p) = \frac{\ell_1 + p\, \ell_3}{P}.
\]
The parameter $A$ is the fraction of the mass of the boundary projected into $[0, p]$; it is precisely this parameter that will enter the final quadratic inequality.

\subsection{Computation of the integrals}

\paragraph*{Interior points.} On the segment $[0, p]$ we have $F_X = t^2/p$, and
\[
    \int_0^p \frac{t^2}{p}\left(1 - \frac{t^2}{p}\right) dt \;=\; \frac{p^2(5 - 3p)}{15}.
\]
Analogously on $[p, 1]$:
\[
    \int_p^1 F_X (1 - F_X)\, dt \;=\; \frac{(1-p)^2 \bigl(5 - 3(1-p)\bigr)}{15}.
\]
Adding the two summands, we obtain
\[
    I(p) \;:=\; \int_0^1 F_X(t)\bigl(1 - F_X(t)\bigr)\, dt \;=\; \frac{p^2 - p + 2}{15}.
\]

\paragraph*{Boundary points.} Put $\bar A = 1 - A$. On $[0, p]$ the function $F_Y = (A/p)\, t$, therefore
\[
    \int_0^p \frac{A}{p} t \left(1 - \frac{A}{p} t\right) dt \;=\; \frac{A p}{2} - \frac{A^2 p}{3} \;=\; \frac{A p (3 - 2 A)}{6}.
\]
On $[p, 1]$, with the substitution $u = t - p$ and $F_Y = A + \frac{\bar A}{1-p}\, u$:
\[
    \int_0^{1-p} \left(A + \frac{\bar A}{1-p} u\right)\!\!\left(\bar A - \frac{\bar A}{1-p} u\right) du
    \;=\; \frac{\bar A (1 + 2 A)(1 - p)}{6}.
\]
In total
\[
    G(A, p) \;:=\; \int_0^1 F_Y(t)\bigl(1 - F_Y(t)\bigr)\, dt \;=\; \frac{A p (3 - 2 A) + \bar A (1 - p)(1 + 2 A)}{6}.
\]

\subsection{Reduction to a quadratic inequality}

The required per-projection inequality is equivalent to $G(A, p) \geq I(p)$. After the substitution $\bar A = 1 - A$ the numerator of $G$ reduces to
\[
    -2 A^2 + A (2 p + 1) + (1 - p),
\]
and the comparison $G \geq I$, after multiplication by $30$, takes the form
\[
    f(A, p) \;:=\; 10 A^2 - (10 p + 5) A + (2 p^2 + 3 p - 1) \;\leq\; 0.
\]

Thus, Theorem~\ref{thm:triangle-projection} reduces to the following two lemmas.

\begin{lemma}[Range of the parameter $A$]\label{lem:triangle-A-range}
For any triangle and any direction of projection
\[
    \frac{p}{2} \;\leq\; A \;\leq\; \frac{1 + p}{2}.
\]
\end{lemma}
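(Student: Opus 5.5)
We need $p/2 \le A \le (1+p)/2$, where $A = (\ell_1 + p\ell_3)/P$ and $P=\ell_1+\ell_2+\ell_3$. Clearing the positive denominator $2P$, the two inequalities become linear inequalities in the edge lengths, and each reduces to a triangle-type inequality.

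\emph{Lower bound.} The inequality $A \ge p/2$ is equivalent to
\[
2\ell_1 + 2p\ell_3 \ge p(\ell_1+\ell_2+\ell_3),
\quad\text{that is,}\quad
(2-p)\ell_1 + p\ell_3 \ge p\ell_2 .
\]
By the triangle inequality, $\ell_2 \le \ell_1 + \ell_3$. Hence
\[
p\ell_2 \le p\ell_1 + p\ell_3 \le (2-p)\ell_1 + p\ell_3,
\]
where the last step uses $p \le 1 \le 2-p$.

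\emph{Upper bound.} The inequality $A \le (1+p)/2$ is equivalent to
\[
2\ell_1 + 2p\ell_3 \le (1+p)(\ell_1+\ell_2+\ell_3),
\quad\text{that is,}\quad
(1-p)\ell_1 \le (1+p)\ell_2 + (1-p)\ell_3 .
\]
Again by the triangle inequality, $\ell_1 \le \ell_2 + \ell_3$. Therefore
\[
(1-p)\ell_1 \le (1-p)\ell_2 + (1-p)\ell_3 \le (1+p)\ell_2 + (1-p)\ell_3,
\]
since $1-p \le 1+p$.

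Both estimates are therefore immediate. The only point needing care is the sign bookkeeping in the rearrangement: one must check that the coefficients $2-p$ and $1+p$ indeed dominate $p$ and $1-p$ respectively, which holds for every $p \in (0,1)$. In particular, no geometric input beyond the triangle inequality for the side lengths $\ell_1,\ell_2,\ell_3$ is needed.
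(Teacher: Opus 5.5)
Your proof is correct and follows the paper's argument. Both clear the denominator to reach $(2-p)\ell_1 \ge p(\ell_2-\ell_3)$ and $(1-p)(\ell_1-\ell_3) \le (1+p)\ell_2$, and close them with the triangle inequality together with $p \le 2-p$ and $1-p \le 1+p$; the only difference is that the paper splits into cases on the sign of $\ell_2-\ell_3$ (resp.\ $\ell_1-\ell_3$), whereas you multiply the triangle inequality directly by the non-negative factor $p$ (resp.\ $1-p$), which makes the case split unnecessary.
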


\begin{proof}
The lengths $\ell_1, \ell_2, \ell_3$ satisfy the triangle inequality, and each edge is no shorter than its projection: $\ell_1 \geq p$, $\ell_2 \geq 1 - p$, $\ell_3 \geq 1$.

\smallskip
\emph{The lower bound.} The inequality $A \geq p/2$ is equivalent to $\ell_1 (2 - p) \geq p (\ell_2 - \ell_3)$. If $\ell_2 \leq \ell_3$, the right-hand side is non-positive while the left-hand side is positive. If, however, $\ell_2 > \ell_3$, then by the triangle inequality $\ell_2 - \ell_3 < \ell_1$, whence $p(\ell_2 - \ell_3) < p \ell_1 \leq (2 - p) \ell_1$ for $p \in (0, 1)$.

\smallskip
\emph{The upper bound.} The inequality $A \leq (1 + p)/2$ is equivalent to $(1 - p)(\ell_1 - \ell_3) \leq (1 + p) \ell_2$. If $\ell_1 \leq \ell_3$, the left-hand side is non-positive. If $\ell_1 > \ell_3$, then by the triangle inequality $\ell_1 - \ell_3 < \ell_2$, whence $(1 - p)(\ell_1 - \ell_3) < (1 - p) \ell_2 \leq (1 + p) \ell_2$.
\end{proof}

\begin{lemma}[Quadratic inequality]\label{lem:triangle-quadratic}
For all $p \in (0, 1)$ and $A \in [p/2, (1 + p)/2]$ one has $f(A, p) \leq 0$.
\end{lemma}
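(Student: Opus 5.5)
The statement to prove is Lemma~\ref{lem:triangle-quadratic}: $f(A,p)\le 0$ for $p\in(0,1)$ and $A\in[p/2,(1+p)/2]$. Since $f(\cdot,p)$ is a quadratic in $A$ with positive leading coefficient $10$, it is convex in $A$. A convex function on a segment attains its maximum at an endpoint. So it suffices to check $f\le 0$ at the two endpoints $A=p/2$ and $A=(1+p)/2$, for every $p\in(0,1)$.

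\emph{Left endpoint $A=p/2$.} Direct substitution gives
\[
f(p/2,p)=\tfrac{10p^2}{4}-\tfrac{(10p+5)p}{2}+2p^2+3p-1=\tfrac52p^2-5p^2-\tfrac52 p+2p^2+3p-1=-\tfrac12p^2+\tfrac12p-1 .
\]
This is negative on $(0,1)$, because $p-p^2\le \tfrac14<2$.

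\emph{Right endpoint $A=(1+p)/2$.} Here
\[
f\!\left(\tfrac{1+p}{2},p\right)=\tfrac{10(1+p)^2}{4}-\tfrac{(10p+5)(1+p)}{2}+2p^2+3p-1 .
\]
Expanding, the first term is $\tfrac52(1+2p+p^2)$ and the second is $-\tfrac12(10p^2+15p+5)$. Summing with the remaining terms gives $-\tfrac12p^2+\tfrac12 p-1$ again. This is the same negative value, which is consistent with the symmetry $p\leftrightarrow 1-p$, $A\leftrightarrow 1-A$ of the problem.

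Both endpoint values are strictly negative, so by convexity $f(A,p)<0$ on the whole interval. The only care needed is in the arithmetic of the endpoint substitutions; no real obstacle is expected.

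A side consequence is that the maximum of $f$ over the admissible range is $\max_{p\in(0,1)}\bigl(-\tfrac12p^2+\tfrac12p-1\bigr)=-\tfrac78$. This uniform negative bound on $f$ turns into a uniform lower bound on $G-I$. After integrating over directions via Cauchy--Crofton (Proposition~\ref{stmt:cauchy-crofton}), that bound is presumably the source of the announced gap $\theta(K)-\Delta(K)\ge \tfrac{7}{240}|\sigma K|$.
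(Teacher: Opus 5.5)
Your proposal is correct and is the same argument as the paper's: convexity of $f(\cdot,p)$ in $A$ reduces the claim to the two endpoints, and at both of them $f$ equals $-\tfrac12(p^2-p+2)<0$, exactly as you computed. Your side remark is also right: the uniform bound $f\le-\tfrac78$, attained at $p=\tfrac12$, is what produces the gap $\theta(K)-\Delta(K)\ge\tfrac{7}{240}|\sigma K|$ in the paper's subsequent remark.
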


\begin{proof}
The function $f(\cdot, p)$ is quadratic in $A$ with a positive leading coefficient, so it suffices to verify its non-positivity at the endpoints of the segment. A direct computation gives
\[
    f\!\left(\tfrac{p}{2},\, p\right) \;=\; -\frac{p^2 - p + 2}{2}, \qquad
    f\!\left(\tfrac{1 + p}{2},\, p\right) \;=\; -\frac{p^2 - p + 2}{2}.
\]
Since $p^2 - p + 2 = (p - \tfrac{1}{2})^2 + \tfrac{7}{4} > 0$ for all real $p$, both values are negative. Consequently, $f(A, p) \leq -\tfrac{1}{2}(p^2 - p + 2) < 0$ on the whole segment.
\end{proof}

Lemmas~\ref{lem:triangle-A-range} and~\ref{lem:triangle-quadratic} together prove $G(A, p) \geq I(p)$ for all admissible pairs $(A, p)$, which completes the proof of Theorem~\ref{thm:triangle-projection}.

% \begin{note_nnum}
% Only two properties of the triangle were used in the proof: (a) the width function is piecewise linear with a unique break point --- this gives a simple explicit form of $F_X$; (b) the projections of the edges onto a line are segments --- this ensures the piecewise linearity of $F_Y$. The triangle inequality for the lengths of the sides plays a key role at the stage of estimating the parameter $A$.
% \end{note_nnum}

\begin{note_nnum}
The proof gives a quantitative gap: for a triangle $K$
\[
    \theta(K) - \Delta(K) \;\geq\; c_1 \int_0^{\pi} w(u)\,\frac{p(u)^2 - p(u) + 2}{30}\, du \;>\; 0,
\]
where $c_1 = 1/2$ is the Cauchy--Crofton constant for $\rho = 1$, $w(u)$ is the width of $K$ in the direction $u$, and $p(u) \in (0, 1)$ is the parameter determining the projection of the vertices onto the direction $u$. Since $p^2 - p + 2 \geq 7/4$ for all $p$, and $\int_0^{\pi} w(u)\, du = |\sigma K|$ (Cauchy's formula), this yields the rough estimate $\theta(K) \geq \Delta(K) + \tfrac{7}{240}\, |\sigma K|$.
\end{note_nnum}

\section{Other approaches to the problem}

In this section we would like to present some other approaches to the problem which, although they have not yet led to substantive general results, seem to us sufficiently interesting and worth mentioning.

\subsection{On the necessity of studying the distance between an interior and a boundary point}

The main essence of the problem is to compare the distributions of the distances between two random interior points $|XX'|$ and two random boundary points $|YY'|$, which turns out to be a rather difficult problem. One has to study separately the properties of the interior and of the boundary distributions, which are then quite hard to relate to each other.

This is precisely why in the section on exact relations we began to consider the distance between an interior and a boundary point $|XY|$. It turns out that studying the relation of this distance with any of those introduced earlier is much easier, since half of the definition already coincides.

Although exact relations can be obtained only for a narrow class of bodies, we can show in general form that in order to compare the interior and the boundary distances it suffices to compare the mean mixed distance with one of them.

\begin{statement}\label{stmt:cnd-inequality}
    Let a random variable $X$ have the distribution $\distr{X}$, and let a variable $Y$ independent of it have the distribution $\distr{Y}$. Then for any $\rho \in (0, 2]$
    $$\E|XY|^\rho \;\geq\; \cfrac{\E|XX'|^\rho + \E|YY'|^\rho}{2}.$$
\end{statement}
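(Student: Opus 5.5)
The plan is to use that $\|x-y\|^\rho$ is a conditionally negative definite kernel on $\R^d$ for $\rho \in (0,2]$. The inequality is then the statement that the associated energy distance is non-negative:
\[
2\E|XY|^\rho - \E|XX'|^\rho - \E|YY'|^\rho \;\geq\; 0.
\]
Concretely, for any finite signed measure $\nu$ on $\R^d$ with total mass zero and a finite $\rho$-th moment,
\[
\iint |x-y|^\rho\,\nu(dx)\,\nu(dy) \le 0.
\]
Applying this to $\nu = \distr{X} - \distr{Y}$ and expanding the product gives exactly the claim. We may assume all three expectations are finite, which holds in our setting since the points live in a compact body.

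To prove the negativity I would use the Lévy–Khintchine type representation of the power of the norm. For $0<\rho<2$ there is a constant $c_{d,\rho}>0$ with
\[
|z|^\rho = c_{d,\rho}\int_{\R^d} \frac{1-\cos\langle \xi, z\rangle}{|\xi|^{d+\rho}}\,d\xi .
\]
This follows by rotation invariance and scaling; the integral converges because $1-\cos = O(|\xi|^2|z|^2)$ near $0$ and $\rho<2$. Substitute $z = x-y$, integrate against $\nu\otimes\nu$, and exchange integrals by Fubini. This is justified because the integrand $1-\cos$ is non-negative and $|\nu|\otimes|\nu|$ integrates $|x-y|^\rho$.

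The constant term $1$ disappears because $\nu(\R^d)=0$. The cosine term becomes
\[
\iint \cos\langle \xi, x-y\rangle\,\nu(dx)\nu(dy) = |\hat\nu(\xi)|^2 \ge 0.
\]
Hence
\[
\iint |x-y|^\rho\,d\nu\,d\nu = -c_{d,\rho}\int |\hat\nu(\xi)|^2 |\xi|^{-d-\rho}\,d\xi \le 0.
\]
Written in terms of the three expectations, $|\hat\nu|^2 = |\varphi_X-\varphi_Y|^2$, where $\varphi$ denotes a characteristic function; this gives the standard energy-distance identity.

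The endpoint $\rho=2$ I will handle directly:
\[
2\E|XY|^2 - \E|XX'|^2 - \E|YY'|^2 = 2|\E X - \E Y|^2 \ge 0.
\]
This is immediate from expanding squares using independence. Alternatively it follows from the case $\rho<2$ by letting $\rho\to2$, using dominated convergence on bounded supports.

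The main obstacle is the rigorous justification of the integral representation and of the Fubini exchange. In particular one must check that the constant $c_{d,\rho}$ is finite and positive, and that the interchange is legitimate without assuming the individual terms are separately integrable against $d\xi$. To avoid this, I would first prove the inequality for bounded supports, which is all the paper needs. If one prefers to bypass the Fourier computation, Schoenberg's theorem gives an alternative: $\exp(-t|x-y|^\rho)$ is positive definite for $\rho\in(0,2]$, and
\[
|z|^\rho = \frac{\rho}{\Gamma(1-\rho/2)}\int_0^\infty \bigl(1-e^{-t|z|^2}\bigr)\,t^{-1-\rho/2}\,dt
\]
combined with positive definiteness of Gaussians yields the same conclusion.
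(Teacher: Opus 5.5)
Your proposal is correct and is essentially the paper's proof. The paper also writes the defect as $\iint |x-y|^\rho\,d\nu\,d\nu$ with $\nu = \distr{X}-\distr{Y}$ and uses the one-dimensional L\'evy representation to obtain $-c_\rho\int_0^\infty |\hat\nu(t)|^2 t^{-1-\rho}\,dt \le 0$ for $\rho<2$. It handles $\rho=2$ by the variance identity, whose defect is $|\E X-\E Y|^2$, and passes to $\R^d$ via Cauchy--Crofton, which is exactly your $d$-dimensional formula written in polar coordinates $\xi = tu$. Your Tonelli/Fubini justification is more careful than the paper's. The only slip is in the optional Schoenberg remark, where the constant should be $\rho/\bigl(2\Gamma(1-\rho/2)\bigr)$; this is harmless, since only its positivity matters.
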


From this there follows

\begin{corollary}
    Under the same hypotheses, for any $\rho \in (0, 2]$
    $$\E|XY|^\rho \;\leq\; \E|YY'|^\rho \;\so\; \E|XX'|^\rho \;\leq\; \E|YY'|^\rho.$$
\end{corollary}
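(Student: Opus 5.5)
The corollary follows directly from Proposition~\ref{stmt:cnd-inequality}. I would apply that proposition to the pair of distributions $\distr{X}$ and $\distr{Y}$, and then rearrange the resulting inequality using the hypothesis $\E|XY|^\rho \leq \E|YY'|^\rho$.

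Concretely, fix $\rho \in (0, 2]$. Assume every moment involved is finite, which is automatic for the bounded distributions in this paper. Proposition~\ref{stmt:cnd-inequality} gives
\[
\cfrac{\E|XX'|^\rho + \E|YY'|^\rho}{2} \;\leq\; \E|XY|^\rho.
\]
Combining this with the hypothesis gives
\[
\cfrac{\E|XX'|^\rho + \E|YY'|^\rho}{2} \;\leq\; \E|YY'|^\rho.
\]
Multiplying by $2$ and subtracting $\E|YY'|^\rho$ from both sides yields $\E|XX'|^\rho \leq \E|YY'|^\rho$, which is the claim. The subtraction is legitimate because $\E|YY'|^\rho$ is finite.

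There is essentially no obstacle here; all the content sits in Proposition~\ref{stmt:cnd-inequality}. That proposition is the negative-definiteness of $|x-y|^\rho$ for $\rho \in (0,2]$, applied to the signed measure $\distr{X} - \distr{Y}$. The restriction $\rho \le 2$ enters only through it.

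By symmetry of the same inequality, one also gets the mirror implication: $\E|XY|^\rho \leq \E|XX'|^\rho$ implies $\E|YY'|^\rho \leq \E|XX'|^\rho$. So in either direction, comparing the mixed distance with one of the pure distances suffices, as the preceding discussion claims.
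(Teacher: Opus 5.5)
Your argument is correct and is the paper's own: the paper gives the corollary as an immediate consequence of Proposition~\ref{stmt:cnd-inequality}, via the chain $\tfrac12\bigl(\E|XX'|^\rho+\E|YY'|^\rho\bigr)\le\E|XY|^\rho\le\E|YY'|^\rho$ followed by rearranging. Your finiteness caveat is harmless but unnecessary, since if $\E|YY'|^\rho=\infty$ the conclusion holds trivially.
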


The technical proof and the explicit expressions for the defects in the cases $\rho = 1$ and $\rho = 2$ will be given below. First let us discuss the applications of these inequalities.

Consider the function $f(x) := \E|x - Y|^p$. Obviously, $\E|XY|^p = \E f(X)$ and $\E|YY'|^p = \E f(Y)$. Note that $f$ is convex for $p \geq 1$ --- as the expectation of the function $x \mapsto |x - y|^p$, which is convex in $x$. Thereby, in order to verify Conjecture~\ref{hyp:zt} it suffices to establish that the mean value of this convex function with respect to the uniform distribution inside a convex body does not exceed its mean value on the boundary of the same body.

In $\R^1$ this statement is indeed true for any convex function; however, in $\R^2$ it already fails. A sufficiently long and narrow isosceles triangle can serve as a counterexample, with the square of the distance to its apex as the convex function. The point is that the part of the mass of the interior located near the base of the triangle exceeds the corresponding part of the mass of the perimeter, since the width of the triangle increases as one moves from the apex to the base.

Nevertheless, this point of view allows one to establish, almost without effort, the validity of the Zaporozhets--Tarasov conjecture for centrally symmetric circumscribed bodies in any dimension, and also opens the way to considering the case of parallelepipeds in an arbitrary dimension, which did not seem possible before.

\begin{statement}
    Let $K \subset \R^d$ be a convex centrally symmetric circumscribed body. Then
    $$\forall p \in [1, 2] \quad \Delta^p(K) < \theta^p(K).$$
\end{statement}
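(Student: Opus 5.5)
The plan is to reduce the statement to comparing the mixed distance with the boundary distance, and then to exploit homothetic coordinates centred at the centre of symmetry. By Proposition~\ref{stmt:cnd-inequality} and its corollary, for $p \in [1,2]$ it suffices to prove
\[
\E|XY|^p \le \E|YY'|^p ,
\]
because then
\[
\E|XX'|^p \le 2\E|XY|^p - \E|YY'|^p \le \E|YY'|^p .
\]
For strictness it will be enough to get $\E|XY|^p < \E|YY'|^p$. Following the remark preceding the statement, I introduce the function
\[
f(x) := \E|x - Y|^p .
\]
With this notation $\E|XY|^p = \E f(X)$ and $\E|YY'|^p = \E f(Y)$. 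Two properties of $f$ will be used:
\begin{enumerate}
\item $f$ is convex for $p \ge 1$, being an average of the convex functions $x \mapsto |x-y|^p$.
\item Taking the centre of symmetry as the origin $O$, $f$ is even. Indeed, $Y \overset{d}{=} -Y$, so $f(-x) = \E|x+Y|^p = \E|x-Y|^p = f(x)$.
\end{enumerate}
Convexity and evenness together give that $f$ attains its minimum at $O$.

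Next I identify the boundary distribution with a projective one. For a centrally symmetric circumscribed body, the centre of symmetry can be taken as the centre of the inscribed sphere: reflecting an inscribed ball through $O$ gives another inscribed ball, and the convex hull argument (or uniqueness of the centre in the relevant sense) places a centre at $O$. By the proposition on circumscribed bodies, $Y_O \overset{d}{=} Y$. By the proposition on homothetic coordinates, $X \overset{d}{=} X_\alpha \cdot Y_O$ as vectors from $O$, with $X_\alpha$ independent of the direction and $X_\alpha \in [0,1]$. Hence
\[
\E f(X) = \E f(X_\alpha Y),
\]
with $X_\alpha$ independent of $Y$.

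Now fix a boundary point $y$ and $\alpha \in [0,1]$. Convexity and the minimum at $O$ give
\[
f(\alpha y) \le \alpha f(y) + (1-\alpha) f(0) \le f(y).
\]
Averaging over $\alpha$ and $y$ yields $\E f(X) \le \E f(Y)$, which proves the non-strict inequality.

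The main obstacle is strictness. Since $X_\alpha < 1$ almost surely, it suffices to show that $f(0) < f(y)$ for every $y \ne 0$. Writing $f(y) = \E\tfrac12\bigl(|y-Y|^p + |y+Y|^p\bigr)$ by symmetry of $Y$, I compare pointwise with $|Y|^p = f(0)$:
\begin{enumerate}
\item For $p \in (1,2]$, strict convexity of $|\cdot|^p$ gives $\tfrac12\bigl(|y-Y|^p + |y+Y|^p\bigr) > |Y|^p$ whenever $y \ne 0$.
\item For $p = 1$, the triangle inequality gives $|y-Y| + |y+Y| \ge 2|Y|$, with equality only when $y - Y$ and $y + Y$ are codirected, i.e.\ $Y$ lies on the line through $O$ and $y$ with $|Y| \ge |y|$. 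For $d \ge 2$ this event has zero probability under the boundary measure, since the line meets $\partial K$ in two points.
\end{enumerate}
Thus $f(y) > f(0)$ for every $y \ne 0$, so $\E f(X) < \E f(Y)$, and the strict inequality $\Delta^p(K) < \theta^p(K)$ follows from the corollary of Proposition~\ref{stmt:cnd-inequality}.
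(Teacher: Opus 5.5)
For the non-strict inequality your argument is the paper's. You reduce via Proposition~\ref{stmt:cnd-inequality} to $\E f(X)\le \E f(Y)$, write $X\overset{d}{=}X_\alpha Y_O$ in homothetic coordinates centred at the incentre, use $Y_O\overset{d}{=}Y$, and conclude from convexity and central symmetry. Your evenness of $f$ is exactly the paper's pairing of the directions $\pm s$. You also make explicit that the incentre is $O$, which the paper uses tacitly when it writes $\gamma(-s)=-\gamma(s)$. The clean reason: tangency to the facets with normals $\pm u_i$ gives $h_K(u_i)\mp\langle c,u_i\rangle=r$, hence $\langle c,u_i\rangle=0$ for all $i$ and $c=O$.

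Where you genuinely differ is strictness, and here your route is the sounder one. The paper derives $\Delta^p(K)<\theta^p(K)$ from the corollary of Proposition~\ref{stmt:cnd-inequality} ``since the distributions of $X$ and $Y$ are different''. That works for $p<2$, but not for $p=2$: there the defect of that inequality is $|\E X-\E Y|^2$, which vanishes for a centrally symmetric body. So strictness at $p=2$ is equivalent to $\E f(X)<\E f(Y)$, which is exactly what you prove.

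Conversely, your $p=1$ strictness step needs $d\ge 2$, as you note. For $d=1$ and $K=[-a,a]$ one has $f\equiv a$ on $K$, so $\E|XY|=\E|YY'|$. There strictness must come from the strict form of the CND inequality for $\rho<2$, or directly from $\Delta=2a/3<a=\theta$.

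A small slip: equality in $|y-Y|+|y+Y|\ge 2|Y|$ requires $Y-y$ and $Y+y$ (not $y-Y$ and $y+Y$) to be codirected. The conclusion you draw from it ($Y$ on the line through $O$ and $y$ with $|Y|\ge|y|$) is nevertheless the correct one.
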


\begin{proof}
By what has been said above it suffices to establish $\E f(X) \leq \E f(Y)$, where $f(x) = \E|x - Y|^p$ is a convex function (the strict inequality $\Delta^p(K) < \theta^p(K)$ then follows from the corollary to Proposition~\ref{stmt:cnd-inequality}, since the distributions of $X$ and $Y$ are different).

Let us use the representation of the points of a circumscribed body in homothetic coordinates with centre at the centre of the inscribed sphere (see Section~\ref{sec:homothetic}): $X = \alpha\, \gamma(s)$, where $\alpha \in [0, 1]$ is the radial coordinate with density $d\, \alpha^{d-1}$, $s \in S^{d-1}$ is the direction with distribution $\mu_S$, and $\alpha$ and $s$ are independent. For a centrally symmetric circumscribed body the boundary point $Y$ has the form $\gamma(s)$ with the same distribution of the direction.

Let us split the directions into pairs of opposite ones $\{s, -s\}$. By the central symmetry of the body $\gamma(-s) = -\gamma(s)$, and the distribution $\mu_S$ is invariant under $s \mapsto -s$. This allows us to rewrite both integrals in symmetric form:
\[
    \E f(X) \;=\; \E_{(\alpha, s)}\, \frac{f\bigl(\alpha\, \gamma(s)\bigr) + f\bigl(-\alpha\, \gamma(s)\bigr)}{2}, \qquad
    \E f(Y) \;=\; \E_s\, \frac{f\bigl(\gamma(s)\bigr) + f\bigl(-\gamma(s)\bigr)}{2}.
\]
Therefore it suffices to establish that for all $\alpha \in [0, 1]$ and any vector $v \in \R^d$
\[
    \frac{f(\alpha\, v) + f(-\alpha\, v)}{2} \;\leq\; \frac{f(v) + f(-v)}{2}.
\]

This statement --- that the mean value of a convex function on a symmetric pair $\{\alpha v,\, -\alpha v\}$ does not exceed its mean on the pair of endpoints of the segment $\{v, -v\}$ --- is a direct consequence of convexity. By the convexity of $f$ the function $\alpha \mapsto f(\alpha v) + f(-\alpha v)$ is non-decreasing in $\alpha$ on $[0, 1]$: as $\alpha$ increases, the symmetric pair $\{\alpha v, -\alpha v\}$ ``spreads apart'' from the centre of the segment $[-v, v]$ while the arithmetic mean is preserved, and the sum of the values of a convex function on such a pair only grows from this. In particular, it does not exceed its value at $\alpha = 1$, which is what was required.
\end{proof}

\begin{statement}
    Let $K \subset \R^d$ be a parallelepiped formed by the vectors $v_1, \ldots, v_d$. Then
    $$\forall p \in [1, 2] \quad \Delta^p(K) < \theta^p(K).$$
\end{statement}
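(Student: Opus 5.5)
The plan is to follow the scheme of the previous proposition. By Proposition~\ref{stmt:cnd-inequality} and its corollary, it suffices to prove $\E f(X) \le \E f(Y)$, where $f(x) = \E|x - Y|^p$ is convex for $p \ge 1$. Strictness then comes from the defect in Proposition~\ref{stmt:cnd-inequality}, since the distributions of $X$ and $Y$ differ. The difficulty is that a parallelepiped is generally not circumscribed, so $Y \ne Y_O$ and homothetic coordinates cannot be used directly. Instead I will decompose the boundary face by face.

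After translating so that the centre is at the origin, write $K = \{\sum t_i v_i : t_i \in [-1,1]\}$. The boundary is the union of the $2d$ facets $F_i^{\pm} = \{t_i = \pm 1\}$. Let $a_i$ be the $(d-1)$-volume of $F_i^\pm$, so $Y$ falls on $F_i^+ \cup F_i^-$ with probability $a_i/\sum_j a_j$. Conditionally on this event, the coordinates $t_j$ for $j \ne i$ are uniform on $[-1,1]$, and $t_i = \pm 1$ with equal probability. The interior point $X$ has all $t_j$ independent and uniform. Hence, for each fixed $i$,
\[
\E f(X) = \E_{t_{-i}}\, \E_{t_i \sim U[-1,1]}\, f\Bigl(t_i v_i + \sum_{j\ne i} t_j v_j\Bigr).
\]
For fixed $t_{-i}$, the function $s \mapsto f(s v_i + w)$ is convex on $[-1,1]$. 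By the one-dimensional fact noted in the text (the mean of a convex function over a segment is at most the mean of its endpoint values; the symmetric-pair argument from the previous proof gives this directly), we get
\[
\E_{t_i} f(t_i v_i + w) \le \tfrac{1}{2}\bigl(f(v_i + w) + f(-v_i + w)\bigr).
\]
The right-hand side, averaged over $t_{-i}$, is exactly $\E[f(Y) \mid Y \in F_i^+ \cup F_i^-]$. Therefore $\E f(X) \le \E[f(Y) \mid Y \in F_i^\pm]$ for every $i$. Taking the convex combination with weights $a_i/\sum_j a_j$ gives $\E f(X) \le \E f(Y)$.

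The key observation, and the only step needing care, is that the interior distribution is the same for every facet pair, whereas the boundary distribution is a mixture over $i$. So the single inequality per $i$ suffices regardless of the weights $a_i$. This is exactly why no circumscribed condition is needed. I would also double-check the strictness claim by verifying that the corollary to Proposition~\ref{stmt:cnd-inequality} yields a strict inequality when $\distr{X} \ne \distr{Y}$ and $\rho \in [1,2]$. For $\rho < 2$ this follows from strict negative definiteness of $|x-y|^\rho$. For $\rho = 2$ the defect is $|\E X - \E Y|^2$, which is zero here. So at $p = 2$ strictness must come instead from the convexity step, using the strict convexity of $f(x) = \E|x-Y|^2$ in $x$, which makes the segment-versus-endpoints inequality strict.
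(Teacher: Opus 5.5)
Your argument is correct and is essentially the paper's proof. The paper also pairs parallel facets and writes $X \overset{d}{=} W + T v_i$, with $W$ uniform on a facet and $T$ uniform on $[0,1]$. It then applies the segment-versus-endpoints convexity inequality to $f(x)=\E|x-Y|^p$ and mixes over $i$ with weights $2|F_{i,1}|/|\partial K|$.

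Your extra care about strictness is warranted. The paper attributes strictness to the corollary of Proposition~\ref{stmt:cnd-inequality}, but at $p=2$ that defect is $|\E X-\E Y|^2=0$ for a parallelepiped. So strictness at $p=2$ must indeed come, as you propose, from the strict convexity of the quadratic $f(x)=\E|x-Y|^2$.
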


\begin{proof}
    As was noted above, it suffices to show that $\E f(X) \leq \E f(Y)$. Let us split the faces into pairs of parallel ones $F_{i,1}, F_{i,2}$, differing by a translation by the vector $v_i$, so that $F_{i,1} + v_i = F_{i,2}$. By the law of total probability:
    \begin{align*}
        \E f(Y)
        &= \sum\limits_{i=1}^{d} \frac{2|F_{i,1}|}{|\partial K|} \cdot \E \bigl[f(Y) \,\big|\, Y \in F_{i,1} \cup F_{i,2}\bigr] \\
        &= \sum\limits_{i=1}^{d} \frac{2|F_{i,1}|}{|\partial K|} \cdot \E \!\left[\frac{f(W) + f(W + v_i)}{2} \,\Big|\, W \in F_{i,1}\right] \\
        &\geq \sum\limits_{i=1}^{d} \frac{2|F_{i,1}|}{|\partial K|} \cdot \E \!\left[ \int_{0}^{1} f(W + t v_i)\, dt \,\Big|\, W \in F_{i,1}\right] \\
        &= \sum\limits_{i=1}^{d} \frac{2|F_{i,1}|}{|\partial K|} \cdot \E f(X) \;=\; \E f(X).
    \end{align*}
    The inequality is a consequence of the convexity of $f$ on the segment $[w, w + v_i]$. The penultimate equality holds because the map $(w, t) \mapsto w + t v_i$ defines an affine bijection $F_{i,1} \times [0, 1] \to K$ with constant Jacobian and, consequently, carries the uniform distribution on $F_{i,1} \times [0, 1]$ into the uniform distribution on $K$; therefore $X \overset{d}{=} W + T v_i$, where $W$ is uniform on $F_{i,1}$, $T$ is uniform on $[0, 1]$, and they are independent, whence $\E f(X) = \E\bigl[ \int_{0}^{1} f(W + t v_i)\, dt \,\big|\, W \in F_{i,1} \bigr]$. The last equality follows from $\sum_{i=1}^{d} 2|F_{i,1}| = |\partial K|$. This implies the stated assertion.
\end{proof}

Let us pass to the proof of Proposition~\ref{stmt:cnd-inequality}. The statement itself is a classical result: it is equivalent to the conditional negative definiteness of the kernel $|x - y|^\rho$ for $\rho \in (0, 2]$, established by Schoenberg~\cite[\textsection\,3, p.~525 and \textsection\,6, Theorem~5]{Schoenberg1938} (the embeddability of the spaces $E_m^p(\gamma)$ into a Hilbert space for $\gamma \leq p/2$, whence $|x-y|^\rho$ turns out to be conditionally negative definite). The probabilistic form of the statement, in which the finite sums $\sum \rho_i \rho_k\, |x_i - x_k|^\rho$ are replaced by double integrals with respect to the signed measure $\nu = \distr{X} - \distr{Y}$ of total mass zero, is written out explicitly, for example, in~\cite{BergChristensenRessel1984}. The case of $\R^d$ is obtained from the one-dimensional one by applying the Cauchy--Crofton formula: the one-dimensional inequality applied to the projections $P_u(X), P_u(Y)$ onto each direction $u$ gives, after integration over $u$, the $d$-dimensional version.

Let us give a proof based on the Lévy representation for the function $|z|^\rho$. First of all let us introduce an auxiliary notion which will be useful to us more than once in what follows.

\begin{definition}[Variance of a point]\label{def:variance-point}
Let $X$ be a random point in $\R^d$ with finite second moment. Its \emph{variance} is defined to be
\[
    \D X \;:=\; \E |X - \E X|^2 \;=\; \sum\limits_{i = 1}^{d} \D X_i,
\]
that is, the sum of the variances of the coordinates of the point.
\end{definition}

\begin{statement}[The mean square of the distance via the variances]\label{stmt:distance-variance}
For any independent random points $X$, $Y$ in $\R^d$ with finite second moments one has
\[
    \E |X - Y|^2 \;=\; \D X \;+\; \D Y \;+\; |\E X - \E Y|^2.
\]
In particular, for two independent copies of one point $\E |X - X'|^2 = 2 \D X$.
\end{statement}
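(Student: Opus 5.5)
The plan is to expand the square and use independence. Write $\mu = \E X$ and $\nu = \E Y$, and decompose
\[
X - Y \;=\; (X - \mu) \;-\; (Y - \nu) \;+\; (\mu - \nu).
\]
Then expand
\[
|X - Y|^2 = \langle X - Y,\, X - Y\rangle
\]
by bilinearity of the inner product. This produces the three squared norms $|X-\mu|^2$, $|Y-\nu|^2$ and $|\mu-\nu|^2$, together with three cross terms.

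Next, take expectations term by term. Finite second moments make every term integrable, with the cross terms controlled by Cauchy--Schwarz. The three squares give, by Definition~\ref{def:variance-point}, $\D X$, $\D Y$ and the constant $|\E X - \E Y|^2$.

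For the cross terms:
\begin{itemize}
\item The terms $\E\langle X-\mu,\, \mu-\nu\rangle$ and $\E\langle Y-\nu,\, \mu-\nu\rangle$ vanish because $\mu-\nu$ is deterministic and $\E(X-\mu) = \E(Y-\nu) = 0$.
\item The term $\E\langle X-\mu,\, Y-\nu\rangle$ is where independence enters. Writing it coordinatewise as $\sum_i \E[(X_i-\mu_i)(Y_i-\nu_i)]$, each summand factorises as $\E(X_i-\mu_i)\,\E(Y_i-\nu_i) = 0$.
\end{itemize}
This yields $\E|X-Y|^2 = \D X + \D Y + |\E X - \E Y|^2$.

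The special case follows immediately: for $Y = X'$ an independent copy of $X$, we have $\D X' = \D X$ and $\E X' = \E X$, so $\E|X-X'|^2 = 2\D X$.

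The main point to check is the integrability justification. Everything else is algebra, and this step is routine: $|X|^2$ and $|Y|^2$ are integrable, and $|\langle X, Y\rangle| \le |X|\,|Y|$ has finite expectation by independence and the existence of first moments.
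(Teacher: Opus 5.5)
Your proof is correct and is essentially the paper's own argument: the same decomposition $X - Y = (X - \E X) - (Y - \E Y) + (\E X - \E Y)$, with the two cross terms involving the constant vector vanishing by centring and the remaining one vanishing by independence. Your integrability check, which the paper leaves implicit, is a sound (if routine) addition.
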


\begin{proof}
Let us write $X - Y = (X - \E X) - (Y - \E Y) + (\E X - \E Y)$ and expand the square of the Euclidean norm. Besides the squares of the three summands, three cross products appear, and all of them vanish:
\begin{itemize}
    \item $\E \langle X - \E X,\; Y - \E Y \rangle = \langle \E(X - \E X),\, \E(Y - \E Y) \rangle = 0$ --- by independence and centring;
    \item $\E \langle X - \E X,\; \E X - \E Y \rangle = \langle \E(X - \E X),\, \E X - \E Y \rangle = 0$ --- by centring;
    \item $\E \langle Y - \E Y,\; \E X - \E Y \rangle = 0$ --- analogously.
\end{itemize}
There remain three squares:
\[
    \E |X - Y|^2 \;=\; \E |X - \E X|^2 + \E |Y - \E Y|^2 + |\E X - \E Y|^2 \;=\; \D X + \D Y + |\E X - \E Y|^2.
\]
Applying this to $Y = X'$ (an independent copy): $\E X' = \E X$ annihilates the last summand, and $\E |X - X'|^2 = 2 \D X$.
\end{proof}

\begin{proof}[Proof of Proposition~\ref{stmt:cnd-inequality}]
Let us introduce the signed measure $\nu := \distr{X} - \distr{Y}$ of total mass $\nu(\R) = 0$. A direct expansion gives
\[
    \iint |x - y|^\rho \, d\nu(x)\, d\nu(y)
    \;=\; \E|X - X'|^\rho - 2\, \E|X - Y|^\rho + \E|Y - Y'|^\rho.
\]
Therefore the inequality of the proposition is equivalent to
\begin{equation}\label{eq:cnd-form}
    \iint |x - y|^\rho \, d\nu(x)\, d\nu(y) \;\leq\; 0.
\end{equation}

\smallskip
\noindent\emph{The case $\rho \in (0, 2)$.} Let us use the Lévy representation: for $\rho \in (0, 2)$ there is a constant $c_\rho > 0$ such that
\begin{equation}\label{eq:levy}
    |z|^\rho \;=\; c_\rho \int_0^\infty \frac{1 - \cos(tz)}{t^{1+\rho}}\, dt
    \qquad \forall z \in \R.
\end{equation}
By the substitution $u = t|z|$ and the evenness of the cosine, the right-hand side equals $|z|^\rho \cdot I_\rho$, where
$$I_\rho := \int_0^\infty \frac{1 - \cos u}{u^{1+\rho}}\, du.$$
This integral converges for $\rho \in (0, 2)$: at zero $1 - \cos u \sim u^2/2$, so the integrand behaves like $u^{1-\rho}$ (integrable for $\rho < 2$), and at infinity $|1 - \cos u| \leq 2$ and $u^{-1-\rho}$ is integrable for $\rho > 0$. Put $c_\rho := 1/I_\rho$.

Substituting~\eqref{eq:levy} into~\eqref{eq:cnd-form} and using $\nu(\R) = 0$:
\begin{align*}
    \iint |x - y|^\rho \, d\nu(x)\, d\nu(y)
    &= c_\rho \int_0^\infty \frac{1}{t^{1+\rho}}
       \iint \bigl(1 - \cos(t(x - y))\bigr)\, d\nu(x)\, d\nu(y)\, dt \\
    &= -c_\rho \int_0^\infty \frac{1}{t^{1+\rho}}
       \iint \cos(t(x - y))\, d\nu(x)\, d\nu(y)\, dt.
\end{align*}
Expanding $\cos(t(x - y)) = \cos(tx)\cos(ty) + \sin(tx)\sin(ty)$:
$$\iint \cos(t(x - y))\, d\nu(x)\, d\nu(y)
    \;=\; \Bigl(\!\int \cos(tx)\, d\nu(x)\Bigr)^2
       + \Bigl(\!\int \sin(tx)\, d\nu(x)\Bigr)^2
    \;=\; |\hat\nu(t)|^2 \;\geq\; 0,$$
where $\hat\nu(t) := \int e^{itx}\, d\nu(x)$ is the characteristic function of the measure $\nu$. Finally
\begin{equation}\label{eq:cnd-final}
    \iint |x - y|^\rho \, d\nu(x)\, d\nu(y)
    \;=\; -c_\rho \int_0^\infty \frac{|\hat\nu(t)|^2}{t^{1+\rho}}\, dt
    \;\leq\; 0.
\end{equation}

\smallskip
\noindent\emph{The case $\rho = 2$.} The integral~\eqref{eq:levy} diverges at zero, and the Lévy formula does not work directly. On the other hand, everything can be computed explicitly via Proposition~\ref{stmt:distance-variance}: substituting
\[
    \E |X - X'|^2 = 2 \D X, \qquad
    \E |Y - Y'|^2 = 2 \D Y, \qquad
    \E |X - Y|^2 = \D X + \D Y + |\E X - \E Y|^2,
\]
we have
\begin{align*}
    \E |X - Y|^2 - \frac{\E |X - X'|^2 + \E |Y - Y'|^2}{2}
    &= \bigl(\D X + \D Y + |\E X - \E Y|^2\bigr) - \bigl(\D X + \D Y\bigr) \\
    &= |\E X - \E Y|^2 \;\geq\; 0.
\end{align*}
\end{proof}

\begin{note_nnum}[The defect in the case $\rho = 2$]
The proof for $\rho = 2$ gives an explicit expression for the defect of the inequality of Proposition~\ref{stmt:cnd-inequality}:
\[
    \E |X - Y|^2 \;-\; \frac{\E |X - X'|^2 + \E |Y - Y'|^2}{2}
    \;=\; |\E X - \E Y|^2.
\]
In other words, the defect for $\rho = 2$ is exactly the square of the distance between the expectations (the centres) of the distributions of $X$ and $Y$. In particular, equality is attained if and only if $\E X = \E Y$.
\end{note_nnum}

\begin{note_nnum}[The defect for $\rho = 1$ in the one-dimensional case]
In the one-dimensional case ($d = 1$) for $\rho = 1$ the defect also admits an explicit expression --- through the distribution functions. The length of the segment $|X - Y|$ equals the measure of those points $t \in \R$ that lie between $X$ and $Y$:
\[
    |X - Y| \;=\; \int_\R \Ind\{t \text{ lies between } X \text{ and } Y\}\, dt.
\]
A point $t$ lies between $X$ and $Y$ if and only if one of them does not exceed $t$ and the other is not less than $t$. By independence
\[
    \P\bigl(t \text{ lies between } X \text{ and } Y\bigr) \;=\; F_X(t)\bigl(1 - F_Y(t)\bigr) + F_Y(t)\bigl(1 - F_X(t)\bigr),
\]
where $F_X, F_Y$ are the distribution functions. Taking the expectation of the length and applying Fubini's theorem, we obtain
\[
    \E|X - Y| \;=\; \int_\R \bigl[F_X(1 - F_Y) + F_Y(1 - F_X)\bigr]\, dt.
\]
In the particular cases of independent copies ($Y = X'$, respectively $X = Y'$):
\[
    \E|X - X'| \;=\; 2 \int_\R F_X(1 - F_X)\, dt, \qquad
    \E|Y - Y'| \;=\; 2 \int_\R F_Y(1 - F_Y)\, dt.
\]

Let us substitute this into the defect:
\begin{align*}
    \E|X - Y| - \frac{\E|X - X'| + \E|Y - Y'|}{2}
    &= \int_\R \bigl[F_X(1 - F_Y) + F_Y(1 - F_X) \\
    &\qquad\qquad\quad - F_X(1 - F_X) - F_Y(1 - F_Y)\bigr]\, dt.
\end{align*}
Expanding the brackets in the integrand:
\begin{align*}
    & F_X(1 - F_Y) + F_Y(1 - F_X) - F_X(1 - F_X) - F_Y(1 - F_Y) \\
    &\qquad =\; (F_X - F_X F_Y) + (F_Y - F_X F_Y) - (F_X - F_X^2) - (F_Y - F_Y^2) \\
    &\qquad =\; F_X^2 - 2 F_X F_Y + F_Y^2 \\
    &\qquad =\; (F_X - F_Y)^2.
\end{align*}
Finally
\[
    \E|X - Y| \;-\; \frac{\E|X - X'| + \E|Y - Y'|}{2}
    \;=\; \int_\R \bigl(F_X(t) - F_Y(t)\bigr)^2\, dt.
\]
That is, the defect for $\rho = 1$ is the square of the $L^2$-distance between the distribution functions. In particular, the inequality is strict whenever $F_X \not\equiv F_Y$.
\end{note_nnum}

\subsection{Ohlin's lemma and its applications}

In the course of studying the question we managed to discover an extremely entertaining classical statement --- Ohlin's lemma~\cite{Ohlin1969}.

\begin{lemma_nnum}[Ohlin]\label{lem:ohlin}
Let $X, Y$ be one-dimensional random variables with finite and equal means $\E X = \E Y$ and distribution functions $F_X, F_Y$. If there is a point $t_0 \in \R$ such that
\[
    F_X(t) \leq F_Y(t) \text{ for } t < t_0, \qquad
    F_X(t) \geq F_Y(t) \text{ for } t > t_0,
\]
then for any continuous convex function $\varphi$ \textup{(}with finite $\E\varphi(X), \E\varphi(Y)$\textup{)} one has
\[
    \E\varphi(X) \;\leq\; \E\varphi(Y).
\]
\end{lemma_nnum}

At first sight the statement is counter-intuitive --- the requirement on the variables themselves is too weak for such a strong result. Nevertheless the proof relies on an extremely simple idea: integration by parts and supporting the convex function by a supporting line at the point $t_0$ (see~\cite{Ohlin1969}). This technique deserves an attempt at transfer.

Let us apply it to obtain a weakened version of the Zaporozhets--Tarasov conjecture. Note that by the same technique that was used in the centrally symmetric case (Section~\ref{sec:centrally-symmetric}) one can show that the distribution functions of the projections $F_{X_u}, F_{Y_u}$ for planar bodies do indeed always have a unique intersection for each direction.

Indeed, the proof of the increase of the ratio ``area to perimeter'' for the part of the body lying to the left of the scanning line essentially used only the fact that the tangents to the boundary at the points of its intersection with the scanning line are located on the required side of the line. This means that the ratio increases up to the moment when these tangents become parallel to each other. A symmetric analysis shows that when scanning from the opposite end the analogous ratio also increases as one moves away from the same limiting position.

It remains to note that the break point --- the one at which the tangents are parallel --- either itself falls on the quantile $1/2$, or the values up to which these ratios increase turn out to be not less than one on exactly one side. In both cases $F_{X_u}$ and $F_{Y_u}$ can intersect at most once. If, however, there is no suitable break point inside the segment, one can take its endpoint as such.

Thus, for the distributions of interest to us the one-dimensional projections $X_u, Y_u$ have distribution functions intersecting at exactly one point for each direction $u \in S^{d-1}$, and the hypothesis of the following theorem is satisfied automatically.

\begin{theorem}\label{thm:ohlin-application}
Let $X, Y$ be two independent random points in $\R^d$, and suppose that for each direction $u \in S^{d-1}$ the distribution functions of the projections $F_{X_u}, F_{Y_u}$ intersect exactly once at a point $t_0(u)$, with $F_{X_u}(t_0(u)) = F_{Y_u}(t_0(u)) =: p(u)$ and $F_{X_u}(t) \leq F_{Y_u}(t)$ for $t<t_0(u)$ and $F_{X_u}(t) \geq F_{Y_u}(t)$ for $t>t_0(u)$. Then
\[
    \E|X - X'| \;-\; \E|Y - Y'|
    \;\leq\; 2 c_d \int_{S^{d-1}} (\E X_u - \E Y_u) \cdot (2 p(u) - 1)\, du,
\]
where $c_d > 0$ is the Cauchy--Crofton constant. In particular,
\[
    \bigl|\E|X - X'| - \E|Y - Y'|\bigr| \;\leq\; 2 \,|\E X - \E Y|.
\]
\end{theorem}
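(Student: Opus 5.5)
The plan is to prove the inequality one direction $u$ at a time, using the one-dimensional formula for the mean distance, and then integrate over $u$ with the Cauchy--Crofton formula (Proposition~\ref{stmt:cauchy-crofton}). Fix $u$ and write $F_X := F_{X_u}$, $F_Y := F_{Y_u}$, $p := p(u)$, $t_0 := t_0(u)$. By the remark on the defect for $\rho = 1$,
\[
    \E|X_u - X'_u| - \E|Y_u - Y'_u| \;=\; 2\int_\R \bigl[F_X(1-F_X) - F_Y(1-F_Y)\bigr]\, dt .
\]
The first step is the factorisation
\[
    F_X(1-F_X) - F_Y(1-F_Y) \;=\; (F_X - F_Y)\,(1 - F_X - F_Y).
\]

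The second step is a pointwise bound on the integrand, modelled on Ohlin's supporting-line idea. For $t < t_0$ both distribution functions are at most $p$, so $1 - F_X - F_Y \ge 1 - 2p$. In this range $F_X - F_Y \le 0$, so the product is at most $(1-2p)(F_X - F_Y)$. For $t > t_0$ both are at least $p$, so $1 - F_X - F_Y \le 1 - 2p$. In this range $F_X - F_Y \ge 0$, so again the product is at most $(1-2p)(F_X - F_Y)$. Hence the integrand is at most $(1-2p)(F_X - F_Y)$ everywhere. Since $\int_\R (F_X - F_Y)\, dt = \E Y_u - \E X_u$ for bounded variables, integration gives
\[
    \E|X_u - X'_u| - \E|Y_u - Y'_u| \;\le\; 2(2p(u) - 1)\,(\E X_u - \E Y_u).
\]
Integrating over $u \in S^{d-1}$ with the Cauchy--Crofton constant $c_d$ yields the main inequality of the theorem.

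For the ``in particular'' part, use $|2p(u) - 1| \le 1$ to bound the right-hand side by $2c_d \int_{S^{d-1}} |\langle \E X - \E Y, u\rangle|\, du$. By Cauchy--Crofton applied to the deterministic pair of points $\E X$, $\E Y$, this integral times $c_d$ equals $|\E X - \E Y|$. This gives the upper bound $\E|X-X'| - \E|Y-Y'| \le 2|\E X - \E Y|$.

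The main obstacle is the reverse bound needed for the absolute value. The same pointwise trick only gives a one-sided estimate. Using $|1 - F_X - F_Y| \le 1$ controls the integrand by $|F_X - F_Y|$, but under a single crossing $\int |F_X - F_Y|\, dt$ is not in general $|\E X_u - \E Y_u|$. I would first try the symmetric manipulation: rewrite the difference as $-(F_Y - F_X)(1 - F_Y - F_X)$ and bound it below using $1 - F_X - F_Y \le 1$ for $t < t_0$ and $\ge -1$ for $t > t_0$. If that does not produce a mean-difference term, I would fall back on Proposition~\ref{stmt:cnd-inequality} together with the triangle inequality $\E|Y-Y'| \le 2\E|X-Y|$ to control $\E|Y-Y'| - \E|X-X'|$. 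Whether the two-sided claim follows exactly as stated is the point I expect to need the most care.
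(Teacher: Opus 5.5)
Your proof of the main inequality is correct, and it is the paper's argument written in distribution-function form. The paper inserts the mixed pair, $\E|X-X'|-\E|Y-Y'|=(\E|X-X'|-\E|X-Y|)+(\E|X-Y|-\E|Y-Y'|)$. It then applies Ohlin's supporting line at $t_0$ to $g=\E|X-\cdot|$ and to $\tilde g=\E|Y-\cdot|$. Since $g'-(2p-1)=2(F_X-p)$ and $\tilde g'-(2p-1)=2(F_Y-p)$, its two nonpositive remainders are $-2\int(F_X-p)(F_X-F_Y)\,dt$ and $-2\int(F_Y-p)(F_X-F_Y)\,dt$. Your pointwise defect is $(F_X-F_Y)(1-F_X-F_Y)-(1-2p)(F_X-F_Y)=-(F_X-F_Y)\bigl[(F_X-p)+(F_Y-p)\bigr]$. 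Integrated with the factor $2$, it is exactly the sum of those two remainders, because $1-F_X-F_Y=\tfrac12\bigl[(1-2F_X)+(1-2F_Y)\bigr]$. So you obtain the same estimate while avoiding the integration by parts against $d[F_X-F_Y]$. Your derivation of $\E|X-X'|-\E|Y-Y'|\le 2|\E X-\E Y|$ from $|2p(u)-1|\le 1$ and Cauchy--Crofton for the vector $\E X-\E Y$ is also exactly what the paper does.

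The reverse bound you could not obtain is false, so the absolute value in the ``in particular'' part cannot be proved by any method. A counterexample in the paper's own setting: take $K$ the unit disc, $X$ uniform in $K$, and $Y$ uniform on $\partial K$.
\begin{itemize}
\item For every direction $u$, $F_{X_u}(t)-F_{Y_u}(t)=\frac{1}{\pi}\,t\sqrt{1-t^2}$ on $[-1,1]$.
\item So the sign hypothesis holds with $t_0(u)=0$ and $p(u)=\tfrac12$, and $\E X=\E Y=0$.
\item Yet $\E|X-X'|=\frac{128}{45\pi}\neq\frac{4}{\pi}=\E|Y-Y'|$.
\end{itemize}
If you want a literally unique intersection point on all of $\R$, take $X\sim N(0,\varepsilon^2 I_d)$ and $Y\sim N(0,I_d)$ with $0<\varepsilon<1$. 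The projected distribution functions $\Phi(t/\varepsilon)$ and $\Phi(t)$ meet only at $t=0$, and $\E X=\E Y$, but $\E|X-X'|=\varepsilon\,\E|Y-Y'|$.

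The paper's own justification (``$|2p-1|\le1$ together with Cauchy--Crofton'') likewise yields only the one-sided inequality you proved. That one-sided version is all the subsequent corollary ($\E X=\E Y\Rightarrow\Delta(K)\le\theta(K)$) needs. Neither of your fallbacks could rescue the two-sided claim. The first bounds the difference by $\int|F_X-F_Y|\,dt$, and the second by a quantity involving $\E|X-Y|$; neither is controlled by $|\E X_u-\E Y_u|$, as both examples show. You should state and prove only the one-sided bound; your proposal then establishes everything in the statement that is true.
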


\begin{proof}
Let us reduce the problem to a one-dimensional one by the Cauchy--Crofton formula: $\E|X - Y| = c_d \int_{S^{d-1}} \E|X_u - Y_u|\, du$ and analogously for the remaining pairs. It suffices to establish the analogous relation for the projections for each fixed direction $u$. In what follows we omit the index $u$ and work in $\R^1$.

\medskip
Instead of estimating the difference $\E|X-X'| - \E|Y-Y'|$ directly, let us insert between the two ``homogeneous'' pairs the mixed pair $X, Y$:
\[
    \E|X - X'| - \E|Y - Y'|
    \;=\; \bigl(\E|X-X'| - \E|X-Y|\bigr) \;+\; \bigl(\E|X-Y| - \E|Y-Y'|\bigr).
\]
Each of the two summands has the form $\E\varphi(X) - \E\varphi(Y)$ for a suitable convex function $\varphi$, to which the technique of Ohlin's lemma is applicable.

\medskip
\noindent\emph{The first summand.} Let us introduce $g(z) := \E|X - z|$, a convex function (as the expectation of the convex $z \mapsto |x - z|$). Then $\E|X - X'| = \E g(X)$, $\E|X - Y| = \E g(Y)$, and
\[
    \E|X - X'| - \E|X - Y|
    \;=\; \int_\R g(z)\, d\bigl[F_X - F_Y\bigr](z).
\]
Let us support $g$ by the supporting line $\ell(z) = a(z - t_0) + g(t_0)$ at the point of intersection $t_0$, where $a = g'(t_0) = 2 F_X(t_0) - 1 = 2 p - 1$. Then $g(z) = \ell(z) + \phi(z)$, where $\phi \geq 0$ is a convex function with minimum $\phi(t_0) = 0$. Substituting:
\[
    \int g\, d[F_X - F_Y]
    \;=\; \underbrace{a \cdot \int (z - t_0)\, d[F_X - F_Y]}_{= a (\E X - \E Y)}
    \;+\; \underbrace{g(t_0) \cdot \int d[F_X - F_Y]}_{= 0}
    \;+\; \int \phi\, d[F_X - F_Y].
\]
The non-linear remainder $\int \phi\, d[F_X - F_Y] \leq 0$. Indeed, by construction $\phi'(z) \leq 0$ for $z < t_0$ and $\phi'(z) \geq 0$ for $z > t_0$, while the condition of a single intersection gives $F_X(z) - F_Y(z) \leq 0$ for $z < t_0$ and $\geq 0$ for $z > t_0$. Therefore the integrand in $-\int \phi'(z)\,[F_X - F_Y](z)\, dz$ is everywhere non-negative, whence $\int \phi\, d[F_X - F_Y] = -\int \phi'(F_X - F_Y)\, dz \leq 0$.

As a result,
\[
    \E|X - X'| - \E|X - Y| \;\leq\; (2 p - 1)(\E X - \E Y).
\]

\medskip
\noindent\emph{The second summand.} With the function $\tilde g(z) := \E|Y - z|$ everything is analogous: $\tilde g'(z) = 2 F_Y(z) - 1$, and at the point of intersection $\tilde g'(t_0) = 2 F_Y(t_0) - 1 = 2 p - 1$ (the same value of the coefficient, since $F_X(t_0) = F_Y(t_0)$). The same support by a supporting line gives
\[
    \E|X - Y| - \E|Y - Y'| \;\leq\; (2 p - 1)(\E X - \E Y).
\]

\medskip
\noindent\emph{The sum.} Adding the two summands, we obtain for the one-dimensional projections
\[
    \E|X - X'| - \E|Y - Y'| \;\leq\; 2(2 p - 1)(\E X - \E Y).
\]
Returning to $\R^d$ by integration over the directions with the Cauchy--Crofton constant, we obtain the statement of the theorem.

The rough estimate $|2 p - 1| \leq 1$ together with the Cauchy--Crofton formula for $|\E X - \E Y| = c_d \int_{S^{d-1}} |\E X_u - \E Y_u|\, du$ gives the second statement of the theorem.
\end{proof}

\begin{corollary}
    For any convex body $K \subset \R^2$ such that $\E X = \E Y$, one has
    $$\Delta(K) \leq \theta(K).$$
\end{corollary}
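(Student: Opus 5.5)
The corollary follows directly from Theorem~\ref{thm:ohlin-application}, once we check that its single-crossing hypothesis holds for every planar convex body. So the plan is to verify that hypothesis and then plug in $\E X = \E Y$.

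\emph{Step 1: the single-crossing hypothesis.} Take $X$ uniform in $K$ and $Y$ uniform on $\partial K$, and fix a direction $u$. We must show that $F_{X_u}$ and $F_{Y_u}$ cross exactly once, at a point $t_0(u)$, with $F_{X_u}\le F_{Y_u}$ to the left of $t_0(u)$ and $F_{X_u}\ge F_{Y_u}$ to the right. This is exactly what the discussion preceding Theorem~\ref{thm:ohlin-application} asserts.

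The argument is the scanning-line argument from the lemma in Section~\ref{sec:centrally-symmetric}. Sweep a line perpendicular to $u$ from the left end of $P_u(K)$. The ratio (area to the left)/(perimeter to the left) is non-decreasing as long as the tangents at the two intersection points open outward. This monotonicity gives $F_{X_u}\le F_{Y_u}$ on an initial segment. The symmetric sweep from the right end gives $1-F_{X_u}\le 1-F_{Y_u}$ on a final segment. The two segments meet at the position where the tangents are parallel.

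I expect this step to be the main obstacle. It requires a careful version of the break-point bookkeeping: showing that the two monotone regimes glue together with exactly one sign change. As elsewhere in the paper, I would prove it first for polygons and then pass to the limit using Section~\ref{sec:continuity}. If the crossing degenerates into an interval, I would take any point of that interval as $t_0(u)$; the theorem only uses the signs of $F_{X_u}-F_{Y_u}$ on either side.

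\emph{Step 2: apply the theorem.} The first inequality of Theorem~\ref{thm:ohlin-application} gives
\[
\E|X-X'|-\E|Y-Y'|\le 2c_2\int_{S^1}(\E X_u-\E Y_u)(2p(u)-1)\,du .
\]
Projection commutes with expectation, so $\E X_u-\E Y_u=\langle \E X-\E Y,u\rangle=0$ for every $u$ whenever $\E X=\E Y$. The integrand therefore vanishes identically. Hence $\Delta(K)=\E|X-X'|\le\E|Y-Y'|=\theta(K)$.

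The cruder bound $\bigl|\E|X-X'|-\E|Y-Y'|\bigr|\le 2|\E X-\E Y|=0$ would even give equality. That is suspicious, so I would use only the one-sided inequality, which is what the corollary claims.
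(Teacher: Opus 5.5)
Your route is the paper's. The single-crossing hypothesis of Theorem~\ref{thm:ohlin-application} comes from the scanning-line monotonicity, and $\E X_u-\E Y_u=\langle \E X-\E Y,u\rangle=0$ kills the linear term in each of the two summands separately, so the common value $p(u)$ is not even needed.

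The step you leave open is also where your sketch misstates things. The sets $\{F_{X_u}\le F_{Y_u}\}$ and $\{F_{X_u}\ge F_{Y_u}\}$ do not meet at the parallel-tangent position $m$; the crossing $t_0(u)$ is generally elsewhere. Already for a triangle $m=p$, and $F_{X_u}(p)=p$ while $F_{Y_u}(p)=A$, with $A\neq p$ in general. The missing observation is the one the paper's phrase about the ratios being at least one ``on exactly one side'' encodes:
\begin{itemize}
\item Normalise $P_u(K)=[0,1]$. Let $S(t),P(t)$ and $\tilde S(t),\tilde P(t)$ be the area and the boundary length to the left and to the right of the scanning line, and put $\rho=|K|/|\sigma K|$.
\item Then $F_{X_u}(t)\le F_{Y_u}(t)$ is equivalent both to $S(t)/P(t)\le\rho$ and to $\tilde S(t)/\tilde P(t)\ge\rho$; indeed $\rho$ is the mediant of the two ratios.
\item Since $S/P$ is non-decreasing on $(0,m]$ and $\tilde S/\tilde P$ is non-increasing on $[m,1)$, the sign at $m$ decides everything.
\item If $F_{X_u}(m)>F_{Y_u}(m)$, then $\tilde S/\tilde P<\rho$ on all of $[m,1)$, and the unique crossing lies in $(0,m)$.
\item Otherwise $S/P\le\rho$ on all of $(0,m]$, and the unique crossing lies in $[m,1)$.
\end{itemize}

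On the approximation: what may be passed to the limit from polygons is the crossing pattern, or the monotonicity of $S/P$ (which can in fact be proved directly, since the boundary arcs are locally Lipschitz graphs). It is not the final inequality, because polygonal approximants need not satisfy $\E X=\E Y$. Section~\ref{sec:continuity} does not cover this transfer, though it is elementary.

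Your suspicion about the two-sided bound is justified. The proof of Theorem~\ref{thm:ohlin-application} only bounds $\E|X-X'|-\E|Y-Y'|$ from above, so only $\E|X-X'|-\E|Y-Y'|\le 2|\E X-\E Y|$ follows. The disc of radius $r$ refutes the absolute-value version: there $\E X=\E Y$, but $\Delta=128r/(45\pi)<4r/\pi=\theta$. Using only the one-sided inequality, as you do, is the correct reading.
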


In particular, using this statement, one can prove the result for planar centrally symmetric bodies and for an arbitrary moment; however, this requires a much heavier technique and conceals the essence of the stochastic majorisation lying behind it.

\subsection{Crofton lines}

Another classical approach to working with random interior points is that of \emph{Crofton lines}. On the space of all lines of the plane there exists a unique (up to a constant factor) measure invariant under motions. It is most conveniently described as follows: each line is specified by a pair consisting of a direction and a signed distance from the origin, and the invariant measure on this space is the direct product of the uniform measure on the circle of directions and the Lebesgue measure on the distances.

The set of lines meeting a given convex body $K$ has finite invariant measure; normalising it to unity, we obtain a probability space. The random lines from this space are called the \emph{Crofton lines} of the body~$K$; their intersections with the body define random chords. Already in 1969 Kingman~\cite{Kingman1969} established a fundamental relation between the moments of the distances between two random interior points of the body and the moments of the lengths of these chords:
\[
    \E|XX'|^p \;=\; \frac{d\, \kappa_d}{(d+p)(d+p+1)}\, I_{p+d+1},
\]
where $I_q$ is the $q$-th moment of the length of a random Crofton chord and $\kappa_d$ is the volume of the unit ball in $\R^d$. Moreover, the endpoints of a random chord, although dependent on each other, are individually distributed uniformly on the boundary of the body.

Unfortunately, a random point chosen uniformly along a random chord does not give a uniform distribution inside the body. On the other hand, the point of intersection of two \emph{independent} Crofton lines, conditioned on lying inside the body, is uniformly distributed in it. The discrepancy that arises is not hard to explain: when a random chord is chosen, all the chords are taken ``with equal weight'', whereas a genuine uniform interior point, by Fubini's theorem, falls on long chords more often than on short ones. This correction can be introduced artificially --- and then the problem admits the following reformulation.

\begin{statement}
Let $K \subset \R^2$ be a convex body, let $X$ be a random point uniformly distributed inside $K$, let $Y, Y'$ be independent random points uniformly distributed on $\partial K$, and let $l$ be a random Crofton line. Put
\[
    f(x) := \E|x - Y'|, \qquad g(x) := \E\bigl[|l \cap K| \,\big|\, x \in l\bigr].
\]
If $\E\bigl[f(Y)\, g(Y)\bigr] \leq \E f(Y) \cdot \E g(Y)$, then $\Delta(K) \leq \theta(K)$.
\end{statement}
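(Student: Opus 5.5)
The plan is to reduce the claim to the mixed inequality $\E|XY| \leq \E|YY'|$ and then get that inequality by disintegrating the uniform interior point along Crofton chords. By the corollary to Proposition~\ref{stmt:cnd-inequality} (with $\rho = 1$), the inequality $\E|XY| \leq \E|YY'|$ already implies $\Delta(K) \leq \theta(K)$. Here $\E|XY| = \E f(X)$ and $\E|YY'| = \E f(Y)$, with $f(x) = \E|x - Y'|$ convex, as noted after that corollary. So it suffices to prove $\E f(X) \leq \E f(Y)$. As usual, I would work with polygons or smooth bodies first and pass to the limit using Section~\ref{sec:continuity}.

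\emph{Step 1: disintegrate $X$ along Crofton chords.} Let $l$ be a Crofton line with chord $l \cap K$ of length $L = |l \cap K|$ and endpoints $Y_1, Y_2$. By Fubini's theorem in the coordinates (direction, signed offset, position along the line), the uniform point $X$ has the following law. Choose $l$ with density proportional to $L$, then choose $X$ uniformly on the chord $l \cap K$. This is exactly the ``length-biased'' correction mentioned before the statement. Hence
\[
    \E f(X) \;=\; \frac{\E\bigl[L \cdot \frac{1}{L}\int_{l \cap K} f\bigr]}{\E L}.
\]

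\emph{Step 2: use convexity on each chord.} Since $f$ is convex, its average over a segment is at most the average of its values at the two endpoints, so $\frac{1}{L}\int_{l\cap K} f \leq \frac{f(Y_1) + f(Y_2)}{2}$. The roles of $Y_1$ and $Y_2$ are exchangeable, which gives
\[
    \E f(X) \;\leq\; \frac{\E\bigl[L\, f(Y_1)\bigr]}{\E L}.
\]

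\emph{Step 3: condition on the endpoint.} Each endpoint of a Crofton chord is uniformly distributed on $\partial K$, as recalled before the statement, so $Y_1 \overset{d}{=} Y$. Conditioning on $Y_1$ gives $\E[L f(Y_1)] = \E[f(Y)\, g(Y)]$ and $\E L = \E g(Y)$, where $g(x) = \E[L \mid Y_1 = x]$. This $g$ is to be identified with the $g$ of the statement, namely the mean chord length given that the line passes through the boundary point $x$. Therefore
\[
    \E f(X) \;\leq\; \frac{\E[f(Y)g(Y)]}{\E g(Y)} \;\leq\; \E f(Y),
\]
where the last inequality is exactly the hypothesis $\E[f(Y)g(Y)] \leq \E f(Y)\,\E g(Y)$.

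\emph{Main obstacle.} I expect the main difficulty to be the rigorous meaning of the conditioning in Step 3. The event $\{x \in l\}$ has measure zero, so $g$ must be defined through the disintegration of the Crofton measure with respect to the endpoint map $l \mapsto Y_1$. I would first parametrise lines through a boundary point $x$ by their angle $\varphi$ with the tangent, so that the Crofton measure becomes $\sin\varphi\, d\varphi\, \sigma(dx)$, which is the standard formula for lines hitting a curve. In that parametrisation $g(x)$ is the $\sin\varphi$-weighted mean of the chord length through $x$. I would then check that this coincides with the meaning of $\E[|l\cap K| \mid x \in l]$ intended in the statement, and that the $x$-marginal is uniform on $\partial K$, which confirms $Y_1 \overset{d}{=} Y$. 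Steps 1 and 2 are routine once this parametrisation is fixed.
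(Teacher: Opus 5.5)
Your proposal is correct and follows the paper's proof essentially step by step. You disintegrate the uniform interior point along Crofton chords and bound the chord average of the convex $f$ by its endpoint average. You then use that a chord endpoint is uniform on $\partial K$, with $g$ the ($\sin\varphi$-weighted) conditional mean chord length given that endpoint, and conclude via the corollary to Proposition~\ref{stmt:cnd-inequality} with $\rho = 1$. The only cosmetic difference is that you keep the bound as $\E[f(Y)g(Y)]/\E g(Y)$, whereas the paper evaluates $\E g(Y) = \pi|K|/|\partial K|$ by Cauchy's formula and cancels the prefactor; by the tower property these are the same thing.
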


\begin{proof}
Denote by $h_K(u) = \sup\limits_{x \in K} \langle x, u\rangle$ the support function of the body (a standard object): it shows how far the body extends in the direction $u$. The area of the body can be ``unfolded'' into an integral over the Crofton lines: for any direction $u \in [0, \pi]$ the chords $l(u, t) = K \cap \{\langle x, u\rangle = t\}$ for $t \in [h_K(-u), h_K(u)]$ sweep out the whole body, and integration over $u$ simply gives a $\pi$-fold repetition. In particular,
\[
    \E|XY| \;=\; \frac{1}{|K|} \int_K f(x)\, dx
    \;=\; \frac{1}{\pi \cdot |K|} \int_0^{\pi}\!\! du \int_{h_K(-u)}^{h_K(u)}\!\! dt \int_{l(u, t)} f(x)\, dx.
\]

Let us use the convexity of $f$: on any chord its mean does not exceed the mean of the values at the endpoints. If $a, b \in \partial K$ are the endpoints of the chord $l$, then
\[
    \int_{l} f(x)\, dx \;\leq\; |l| \cdot \frac{f(a) + f(b)}{2}.
\]
Substituting into the integral above and using the fact that the marginal distributions of the endpoints of a Crofton line coincide with the uniform measure on $\partial K$, we obtain
\[
    \E|XY| \;\leq\; \frac{|\partial K|}{\pi \cdot |K|} \cdot \E\bigl[f(Y)\, g(Y)\bigr].
\]
In this expression $g(Y)$ is the conditional mean of the length of a Crofton line given that one of its endpoints is $Y$.
Under the assumption from the statement of the proposition, $\E[f(Y) g(Y)] \leq \E f(Y) \cdot \E g(Y)$, therefore
\[
    \E|XY| \;\leq\; \frac{|\partial K|}{\pi \cdot |K|} \cdot \E f(Y) \cdot \E g(Y).
\]
By Cauchy's formula the mean length of a Crofton line passing through a uniformly chosen boundary point is exactly $\pi |K|/|\partial K|$, that is,
\[
    \E g(Y) \;=\; \frac{\pi \cdot |K|}{|\partial K|}.
\]
Substituting, we see that the outer factor $|\partial K|/(\pi |K|)$ cancels with $\E g(Y)$:
\[
    \E|XY| \;\leq\; \E f(Y) \;=\; \E|YY'|.
\]
It remains to apply the corollary to Proposition~\ref{stmt:cnd-inequality} with $\rho = 1$: from $\E|XY| \leq \E|YY'|$ it follows that $\E|XX'| \leq \E|YY'|$, that is, $\Delta(K) \leq \theta(K)$.
\end{proof}

Thereby, by and large, the Zaporozhets--Tarasov conjecture reduces to checking the sign of the correlation of the random variables $f(Y)$ and $g(Y)$ (up to an intermediate convexity estimate). However, because of the normalisation by the $\sin$ of the angle between the chord and the tangent, the function $g$ turns out to be hard to study, and we did not manage to obtain a formal derivation of the non-positivity of the covariance $\mathrm{Cov}(f(Y), g(Y))$ for an arbitrary convex body.

\section{Conclusion}

The Zaporozhets--Tarasov conjecture, posed in 2019, turned out to be an extremely interesting problem. In the course of the work we managed to achieve substantial progress. We have completely answered the conjecture in the centrally symmetric case: we confirmed it in dimension 2 and refuted it in all higher dimensions. We also managed to indicate explicitly a class of bodies for which the conjecture holds in all dimensions, and to verify the conjecture in the case of triangles.

Separate parts of the work were presented at several conferences of both all-Russian and international level. The centrally symmetric case is published in~\cite{lotnikov2025}; the counterexample is being prepared for publication as a preprint.

Nevertheless, we did not manage to answer the posed question completely: the general case of planar bodies still remains open. However, we have a number of further conjectures that deserve a detailed study; some of them are given in the present work, and we hope that one day this question will be given an unambiguous answer.

\section*{Acknowledgement}

Summing up the work, I cannot fail to express my gratitude to my academic advisor. Dmitry Nikolaevich supported and guided me in every way during the research, was always aware of all the details of the work being carried out and of the conjectures being explored. Many thanks for an extremely interesting problem and for the most interesting joint work.

\section*{Author's note added to the English translation (August 2026)}
\addcontentsline{toc}{section}{Author's note added to the English translation (August 2026)}

The Russian original of the present work was submitted as the author's Bachelor's thesis, entitled \emph{Mean distance between points inside and on the boundary of a convex body}, and was defended at Saint Petersburg State University on 15 June 2026. The present English text is a faithful translation of that thesis. No mathematical statement or proof from the Russian original has been altered.

A complementary result concerning the planar case was obtained by Maxim Kukushkin in the Russian-language manuscript \emph{The Zaporozhets--Tarasov Inequality for an Arbitrary Planar Convex Body}. A version of that manuscript containing the proof of the conjecture in dimension two was deposited in a private GitHub repository on 26 July 2026. The exact state of the manuscript deposited at that time is preserved in commit \texttt{17c5b853b52146f7960f286565a1492fb334424c}\footnote{\url{https://github.com/mkukushkin2004/mean-distance-inequality-proof/commit/17c5b853b52146f7960f286565a1492fb334424c}}. The repository was made public on 13 August 2026.

During the preparation of the present English translation, the author became aware of the independent preprint by Eric Shen, \emph{The Zaporozhets--Tarasov Conjecture on Mean Distances}, arXiv:2608.06470v1, first submitted on 6 August 2026. Shen's preprint contains both a proof of the conjecture in dimension two and counterexamples in every dimension \(d\geq 3\). The counterexamples established in the present thesis were obtained independently of Shen's work.

The present section, including the references to Kukushkin's original manuscript and to Shen's preprint, was added in August 2026 and was not part of the Bachelor's thesis defended in June 2026.

\end{document}